\documentclass[11pt]{article}\textwidth 160mm\textheight 235mm
\oddsidemargin-2mm\evensidemargin-2mm\topmargin-10mm
\usepackage{amsfonts}
\usepackage{amssymb}
\usepackage{graphicx}
\usepackage{amsmath}
\usepackage{amsthm}
\usepackage{enumitem}
\usepackage{tikz}
\usepackage{mathrsfs}
\usepackage{cancel}
\usepackage{todonotes}
\usetikzlibrary{matrix}
\usetikzlibrary{arrows,shapes}
\usepackage{cite}
\usepackage{hyperref}
\hypersetup{colorlinks=true, urlcolor=blue}
\expandafter\let\expandafter\oldproof\csname\string\proof\endcsname
\let\oldendproof\endproof
\renewenvironment{proof}[1][\proofname]{%
  \oldproof[\ttfamily \scshape \bf #1. ]%
}{\oldendproof}
\def\O{{\cal O}}
\def\S{{\mathbb{S}}}

\def\B{\mathbb{B}}
\def\R{{\rm I\!R}}
\def\N{{\rm I\!N}}
\def\oi{{i_0}}
\def\ox{\bar{x}}
\def\oy{\bar{y}}
\def\oz{\bar{z}}
\def\ov{\bar{v}}

\def\ss{\scriptsize }


\def\ve{\varepsilon}

\def\hat{\widehat}
\def\tilde{\widetilde}
\def\emp{\emptyset}
\def\dist{{\rm dist}}
\def\span{{\rm span}\,}
\def\rge{{\rm rge\,}}

\def\Lm{{\Lambda}}
\def\tto{\rightrightarrows}

\def\d{{\rm d}}
\def\sub{\partial}

\def\Bar{\overline}
\def\ra{\rangle}
\def\la{\langle}
\def\ve{\varepsilon}

\def\cone{\mbox{\rm cone}\,}
\def\span{\mbox{\rm span}\,}

\def\inte{\mbox{\rm int}\,}
\def\gph{\mbox{\rm gph}\,}
\def\epi{\mbox{\rm epi}\,}
\DeclareMathOperator*{\mini}{minimize\;}
\DeclareMathOperator*{\argmin}{argmin}

\def\dom{\mbox{\rm dom}\,}

\def\ker{\mbox{\rm ker}\,}

\def\dn{\downarrow}
\def\O{\Omega}

\def\ph{\varphi}

\def\emp{\emptyset}

\def\oR{\Bar{\R}}
\def\lm{\lambda}
\def\olm{\bar\lambda}

\def\gg{\gamma}
\def\dd{\delta}
\def\al{\alpha}
\def\Th{\Theta}

\def\sm{\hbox{${1\over 2}$}}
\def\rsm{\hbox{${1\over 2r}$}}

\def \b{{\}_{k\in\N}}}
\def \c{{\}_{k\ge 0}}}
\def\L{{\mathscr{L}}}
\def\D{{\mathscr{D}}}
\def\sce{\setcounter{equation}{0}}

\def\menv{e_{1^{}/{\rho}}g} 
\def\menvk{e_{1^{}/{\rho_k}}g}
\def\prox{{\rm{prox}}_{\rho^{-1}{g}}} 
\def\proxk{{\rm{prox}}_{\rho_k^{-1}g}}

\def\x1k{{x^{k-1}}}
\def\xk{{x^k}}
\def\xkk{{x^{k+1}}}
\def\l1k{{\lambda^{k-1}}}
\def\lk{{\lambda^k}}
\def\lkk{{\lambda^{k+1}}}
\def\sig1k{{\sigma_{k-1}}}

\def\ro1k{{\rho_{k-1}}}
\def\rok{{\rho_k}}
\def\rokk{{\rho_{k+1}}}
\def\xik{{\beta_k}}
\def\e1k{{\varepsilon_{k-1}}}
\def\ek{{\varepsilon_k}}

\def\s1k{{z^{k-1}}}

\def\skk{{z^{k+1}}}




\begin{document}
\vspace*{0.5in}
\begin{center}
{\bf LOCAL CONVERGENCE ANALYSIS OF AUGMENTED LAGRANGIAN METHODS FOR PIECEWISE LINEAR-QUADRATIC COMPOSITE OPTIMIZATION PROBLEMS  }\\[1 ex]
NGUYEN T. V. HANG\footnote{Institute of Mathematics, Vietnam Academy of Science and Technology, Hanoi 10307, Vietnam (ntvhang@math.ac.vn).}
and  M. EBRAHIM SARABI\footnote{Department of Mathematics, Miami University, Oxford, OH 45065, USA (sarabim@miamioh.edu).}
\end{center}
\vspace*{0.05in}
\small{\bf Abstract.}  Second-order sufficient conditions for local optimality have been playing an important role in local convergence analysis 
of optimization algorithms. In this paper, we demonstrate that  this condition alone suffices to justify  the linear convergence of the primal-dual 
sequence, generated by the augmented Lagrangian method for piecewise linear-quadratic composite optimization problems, even when the Lagrange multiplier  in this class of problems is not unique. Furthermore, 
we establish the equivalence between the second-order sufficient condition  and the quadratic growth condition of the 
augmented Lagrangian problem for this class of composite optimization problems.
 \\[1ex]
{\bf Key words.} Augmented Lagrangian  methods,   second-order sufficient conditions, quadratic growth condition, linear convergence,  piecewise linear-quadratic composite problems\\[1ex]
{\bf  Mathematics Subject Classification (2000)}  90C31, 65K99, 49J52, 49J53
\newtheorem{Theorem}{Theorem}[section]
\newtheorem{Proposition}[Theorem]{Proposition}
\newtheorem{Remark}[Theorem]{Remark}
\newtheorem{Lemma}[Theorem]{Lemma}
\newtheorem{Corollary}[Theorem]{Corollary}
\newtheorem{Definition}[Theorem]{Definition}
\newtheorem{Example}[Theorem]{Example}
\newtheorem{Algorithm}[Theorem]{Algorithm}
\renewcommand{\theequation}{{\thesection}.\arabic{equation}}
\renewcommand{\thefootnote}{\fnsymbol{footnote}}

\normalsize

\section{Introduction}\sce
One of the popular techniques for designing an efficient algorithm for solving  constrained and composite optimization has been to 
utilize a {\em regularization} for nonsmooth parts of the latter problems. Augmented Lagrangian methods (ALM, for short), which was
 first proposed independently by Hestenes and Powell for nonlinear programming problems  with equality constraints  in \cite{Hes69} and \cite{Pow69}, respectively, 
 can be considered as one of such methods that utilize  the Moreau envelope to deal with nonsmoothness in optimization problems. 
 The principal idea of the ALM   is to solve a {\em sequence} of  {\em subproblems} whose objective functions are  the {\em augmented Lagrangians}
 associated with a given optimization problem. The concept of the augmented Lagrangian for an extended-real-valued function,  introduced  by Rockafellar in \cite{Roc74} (see also  \cite[page~518]{rw}) via a perturbation scheme, 
   allows us to extend the ALM for different classes of constrained and composite optimization problems, see \cite{Roc73a,Roc73b,Roc74} for more details. 

 The main objective of this paper is to pursue  a local convergence analysis of the ALM for a large class of composite optimization problems whose  Lagrange multipliers are {\em not} necessary unique. 
Given twice continuously differentiable functions   $\ph:\R^n\to \R$ and  $\Phi:\R^n\to \R^m$,  a convex polyhedral set  $\Th$ in $\R^n$, and a convex piecewise linear-quadratic (CPLQ) function $g:\R^m\to \oR$,
 we aim to present a local convergence analysis of the ALM for  the composite optimization problem 
\begin{equation}\label{comp}
\mbox{minimize}\;\;\ph(x)+g\big(\Phi(x)\big)\quad \mbox{subject to}\;\; x\in \Th.
\end{equation} 
The CPLQ function $g$ in \eqref{comp} gives significant flexibility to  the composite  problem \eqref{comp} to cover important classes of optimization problems including classical 
nonlinear programming problems, constrained and unconstrained min-max optimization problems, and extended nonlinear programming problems introduced in \cite{r97}. 
For example, when the convex function $g$ is $\|\cdot\|_1$ or the pointwise maximum, the composite problem \eqref{comp} covers the regularized least square problem, known as {\em lasso}, 
and   constrained minimax problems, respectively; see Section~\ref{sect06} for a detailed discussion on important classes of the composite problem \eqref{comp}.
The polyhedral convex set $\Th$ in \eqref{comp} makes it possible to  cover nonnegativity constraints, upper and lower bounds on variables, and also situations where 
we want to minimize a function over a linear subspace or an affine subset of $\R^n$. 

The augmented Lagrangian $\L:\R^n\times \R^m\times (0,\infty)\to \R$ associated with the composite problem \eqref{comp} is defined  by 
\begin{equation}\label{aug}
\L(x,\lm,\rho):=\ph(x)+e_{ {1}/{\rho}} g \big(\Phi(x)+\rho^{-1}\lm\big)-\sm\rho^{-1}\|\lm\|^2
\end{equation}
with $e_{1/\rho} g$ signifying  the Moreau envelope of   $g$, given  by 
\begin{equation*}\label{Menv}
e_{1/\rho} g(y):=\inf_{z\in \R^m}\big\{g(z)+\frac{\rho}{2}\|y-z\|^2\big\}, \quad y\in \R^m.
\end{equation*}
It is well-known that the augmented Lagrangian $\L$ is continuously differentiable with respect to both $x$ and $\lm$ and is nondecreasing with the respect to $\rho$, the {\em penalty} parameter of the augmented Lagrangian. 
At  each iteration for a given pair $(\lm,\rho)\in \R^m\times (0,\infty)$,  our proposed  ALM for  the  composite problem \eqref{comp}
requires solving the {\em partially} augmented problem 
\begin{equation}\label{augp}
\mini \L(x, \lm, \rho)\quad \mbox{subject to}\;\; x\in \Th.
\end{equation} 
It is important to mention that the {\em full} augmentation of the composite problem \eqref{comp} is a special case of this  partially augmentation since the composite 
problem \eqref{comp} can be equivalently formulated as \eqref{comp} with $\Th=\R^n$. 

The local convergence analysis of the ALM for nonlinear programming problems with equality constraints  began in \cite{Hes69, Pow69}
and was extended to nonlinear programs with inequality constraints in \cite{Ber82}.  These results were achieved under rather restrictive assumptions such as  the  linear independence constraint qualification and the strict complementarity condition. 
Similar results were established for second-order cone programming and semidefinite programming problems in \cite{lz} and \cite{ssz}, respectively, under the counterparts of the latter conditions for these classes of problems.
These results were improved recently for ${\cal C}^2$-cone reducible constrained optimization problems in  \cite{KaS17,KaS19} (see also \cite{hms20}), where the linear convergence of the   ALM 
was justified under the second-order sufficient condition  and the strong Robinson constraint qualification. The common feature of the aforementioned results is that they require the uniqueness 
of Lagrange multipliers in  problems under consideration. A remarkable progress  was achieved by Fern\'andez and Solodov in \cite{fs12} regarding the local convergence analysis of the ALM for  nonlinear programming problems
with {\em nonunique} Lagrange multipliers, where they showed that the classical second-order sufficient condition {\em alone} suffices  to ensure the Q-linear (linear, for short) convergence of the primal-dual sequence, generated by this method;
see also \cite{iks15} for an improvement of this result. One interesting aspect of this achievement was to demonstrate that no constraint qualification is required for a local convergence 
analysis of the ALM for nonlinear programming problems. 

In this paper, we are going to demonstrate that a similar local convergence analysis and the rate of convergence of the primal-dual sequence of the ALM can be established for the 
composite problem \eqref{comp} under a counterpart of the second-order sufficient condition for this class of problems, formulated in \eqref{sosc}, without assuming  any 
constraint qualifications. This will be achieved by appealing to the concept of the second subderivative, introduced by Rockafellar in \cite{r88}, and to the theory of twice epi-differentiability 
of extended-real-valued functions, established recently in   \cite{mmsmor, mmstams, ms20}, which were not utilized in \cite{fs12}. These tools of second-order generalized differentiation allow us to obtain a second-order characterization of the quadratic growth condition for the partially augmented problem \eqref{augp}, which was known to play an indispensable role in the convergence analysis of optimization algorithms. 
Using the latter quadratic growth condition, we show that the partially augmented problem \eqref{augp} is always solvable and its optimal solution mapping is  uniformly  isolated calm
provided that the penalty parameter $\rho$ therein is sufficiently large. 
Appealing to an error bound estimate for the KKT system of \eqref{comp}, established recently in \cite{s20},   to an error bound for consecutive iterates of the ALM, obtained in Theorem~\ref{est}, 
and employing an iterative framework, proposed by Fisher in \cite[Theorem~1]{Fis02}, to deal with generalized equations with non-isolated solutions, we justify the linear convergence 
of the primal-dual sequence of the ALM for the composite problem \eqref{comp} when the penalty parameter $\rho$ is sufficiently large. 

The rest of the paper is organized as follows. Section ~\ref{sect02} contains  tools of variational analysis, utilized throughout the paper. 
Section~\ref{sect03} provides important first- and second-order variational properties of the augmented Lagrangian \eqref{aug}. In particular, we 
obtain a characterization of the quadratic growth condition for \eqref{augp} via the second subderivative. Section~\ref{sect04} concerns a characterization 
of the quadratic growth condition for the partially augmented problem \eqref{augp} via the second-order sufficient condition. Furthermore, we justify 
a uniform version of the quadratic growth condition for \eqref{augp}, important for our local convergence analysis. Finally, Section~\ref{sect05} contains 
a local convergence analysis and the rate of convergence of the primal-dual sequence constructed  by the ALM for \eqref{comp}.

\section{ Preliminary Definitions and Results}\sce \label{sect02}

In this section, we first briefly review basic constructions of variational analysis and generalized differentiation employed in the paper. In what follows,    we 
 denote by $\B$ the closed unit ball in the space in question and by $\B_r(x):=x+r\B$ the closed ball centered at $x$ with radius $r>0$. 
 In the  product space $\R^n\times \R^m$, we use the norm $\|(w,u)\|=\sqrt{\|w\|^2+\|u\|^2}$ for any $(w,u)\in \R^n\times \R^m$.
 Given a nonempty set $C\subset\R^n$, the symbols $\inte C$,  $\cone C$, and $\span C$ signify its interior, conic hull, and the   linear space generated by  $C$, respectively. 
 For any set $C$ in $\R^n$, its indicator function is defined by $\dd_C(x)=0$ for $x\in C$ and $\dd_C(x)=\infty$ otherwise. We denote
 by $P_C$ the projection mapping onto $C$ and  by $\dist(x,C)$  the distance between $x\in \R^n$ and a set $C$.
 For a vector $w\in \R^n$, the subspace $\{tw |\, t\in \R\}$ is denoted by $[w]$. 
We write $x(t)=o(t)$ with $x(t)\in \R^n$ and $t>0$ to mean that ${\|x(t)\|}/{t}$ goes to $0$ as $t\dn 0$.
Finally, we denote by $\R_+$ (respectively,  $\R_-$) the set of non-negative (respectively, non-positive) real numbers.
For an $n\times m$ matrix $A$, we denote by $\ker A$ and $\rge A$ its null and range spaces, respectively.

A sequence    $\{C^k\b$ of subsets in $\R^n$ is called convergent to a set $C\subset\R^n$ as $k\to \infty$ if $C$ is closed and
\begin{equation*}
\lim_{k\to \infty}{\rm dist}(w,C^k)={\rm dist}(w,C)\;\mbox{ for all }\;w\in\R^n.
\end{equation*}
Given a nonempty set $C\subset\R^n$ with $\ox\in C$, the  tangent cone $T_ C(\ox)$ to $C$ at $\ox$ is defined by
\begin{equation*}\label{2.5}
T_C(\ox)=\big\{w\in\R^n|\;\exists\,t_k{\dn}0,\;\;w_k\to w\;\;\mbox{ as }\;k\to\infty\;\;\mbox{with}\;\;\ox+t_kw_k\in C\big\}.
\end{equation*}
When $C$ is convex, the  {normal cone} to $C$ at $\ox\in C$ in the sense of convex analysis  is given by
\begin{equation*}
N_C(\ox)=\big\{v\in\R^n\;\big|\; \la v,x-\ox\ra\le 0\quad \mbox{for all}\;\; x\in C\big\}.
\end{equation*}
For a convex  function $f:\R^n \to \oR:=(-\infty,\infty]$  with $\ox\in \dom f$, its   subdifferential at $\ox$   is defined  by 
\begin{equation*}\label{sub}
\partial f(\ox)=\big\{v\in\R^n\;\big|\;  \la v,x-\ox\ra\le f(x)-f(\ox)\;\;\mbox{for all}\;x\in \R^n\big\}.
\end{equation*}

A sequence $\{f^k\b$ of functions $f^k:\R^n\to \oR$ is said to {\em epi-converge} to a function $f:\R^n\to [-\infty,\infty]$ if we have $\epi f^k\to \epi f$ as $k\to \infty$.
We denote by $f^k\overset {\ss \mbox{e}}{\to} f$ the  epi-convergence of the sequence $\{f^k\b$ to $f$.
Given  a function  $f:\R^n \to \oR$ and  a point $\ox$ with $f(\ox)$ finite, the subderivative function $\d f(\ox)\colon\R^n\to[-\infty,\infty]$ is defined by
\begin{equation}\label{fsud}
{\mathrm d}f(\ox)(w)=\liminf_{\substack{
   t\dn 0 \\
  w'\to w
  }} {\frac{f(\ox+tw')-f(\ox)}{t}}.
\end{equation}
 The critical cone of $f$ at $\ox$ for $\ov$ with $(\ox,\ov)\in \gph \sub f$ is defined by 
$$
{K_f}(\ox,\ov)=\big\{w\in \R^n\big|\;\la\bar v,w\ra=\d f(\ox)(w)\big\}.
$$
When $f=\dd_C$, namely the indicator function of $C$, the critical cone of $\dd_C$ at $\ox$ for $\ov$ is denoted by $K_C(\ox,\ov)$. In this case, the above definition of the critical cone of a function 
boils down to  the well-known concept of the critical cone of a set (see \cite[page~109]{dr}), namely $K_C(\ox,\ov)=T_C(\ox)\cap [\ov]^\perp$.
Define the parametric  family of 
second-order difference quotients for $f$ at $\ox$ for $\ov\in \R^n$ by 
\begin{equation*}\label{lk01}
\Delta_t^2 f(\bar x , \ov)(w)=\dfrac{f(\ox+tw)-f(\ox)-t\langle \ov,\,w\rangle}{\frac{1}{2}t^2}\quad\quad\mbox{with}\;\;w\in \R^n, \;\;t>0.
\end{equation*}
If $f(\ox)$ is finite, then the {second subderivative} of $f$ at $\ox$ for $\ov$   is given by 
\begin{equation*}\label{ssd}
\d^2 f(\bar x , \ov)(w)= \liminf_{\substack{
   t\dn 0 \\
  w'\to w
  }} \Delta_t^2 f(\ox , \ov)(w'),\;\; w\in \R^n.
\end{equation*}
The second subderivative of $f$ at $\ox$ (without mention of $\ov$) is defined by 
\begin{equation}\label{ssud}
\d^2 f(\ox)(w)=\liminf_{\substack{t\dn 0\\
w'\to w}} \frac{f(\ox+tw')-f(\ox)-t\d f(\ox)(w')}{\sm t^2},\quad w\in \R^n.
\end{equation}
Following \cite[Definition~13.6]{rw}, a function $f:\R^n \to \oR$ is said to be {twice epi-differentiable} at $\bar x$ for $\ov\in\R^n$, with $f(\ox) $ finite, 
if the sets $\epi \Delta_t^2 f(\bar x , \ov)$ converge to $\epi \d^2 f(\bar x,\ov)$ as $t\downarrow 0$. 
In what follows, we refer to the second subderivative of $f$ at $\bar x$ for $\ov$ by the {\em second-order epi-derivative} of $f$ when it is twice epi-differentiable at   $\bar x$ for $\ov$.
 
Recall  that a function $g:\R^m\to \oR$ is called  piecewise linear-quadratic if $\dom g = \cup_{i=1}^{s} C_i$ with $s\in \N$ and $C_i $ being polyhedral convex  sets for $i = 1, \ldots, s$, and if $g$ has a representation of the form
\begin{equation} \label{PWLQ}
g(z) = \sm \langle A_i z ,z \rangle + \langle a_i ,z \rangle + \alpha_i  \quad \mbox{for all} \quad  z \in C_i,
\end{equation}
where $A_i$ is an $m \times m$ symmetric matrix, $a_i\in \R^m$, and $\alpha_i\in \R$ for all $i = 1, \cdots, s$. Take $\oz\in \dom g$ and define the {\em active} indices  of the domain of $g$ at $\oz$ by
\begin{equation}\label{act}
I(\oz)=\big \{i\in \{1,\ldots, m\} |\, \oz\in C_i\big \}.
\end{equation}
If in addition $g$ is convex, 
it follows   from \cite[page~487]{rw} that 
if  $\oz\in \dom g$, the subdifferential of $g$ at  $\oz$ can be calculated by 
\begin{equation}\label{sub}
\sub g(\oz)=\bigcap_{i\in I(\oz)} \big \{v\in \R^m |\, v-A_i \oz- a_i\in N_{ C_i}(\oz)\big \}.
\end{equation}

Next, we recall some second-order variational properties of CPLQ functions, which are  taken from  \cite[Theorem~13.14]{rw} and \cite[Proposition~13.9]{rw}. 

\begin{Proposition}[second-order variational properties of CPLQ] \label{sop} Assume that $g:\R^m\to \oR$ is a CPLQ function with representation \eqref{PWLQ}
and that $\oz\in \dom g$ and $\ov\in \sub g(\oz)$. Set $\ov_i:=\ov -A_i \oz -a_i$ for all $i\in I(\oz)$. 
Then the following conditions hold:
 \begin{itemize}[noitemsep,topsep=0pt]
 \item [{\rm (a)}]   the critical cone of $g$ at $\oz$ for $\ov$ can be calculated by $K_g(\oz,\ov)=N_{\sub g(\oz)}(\ov)$. Moreover, we have 
\begin{equation}\label{cc2}
  \d^2 g(\oz , \ov ) (w)=\d^2 g(\oz)(w)+\dd_{K_g(\oz,\ov)}(w), \quad w\in \R^m.
\end{equation}
 \item [{\rm (b)}] the function $g$ is twice epi-differentiable at $\ox$ for $\ov$ and its second-order epi-derivative  at this point can be calculated by 
\begin{equation}\label{pwfor}
 \d^2 g(\oz , \ov ) (w)  
=\begin{cases}
\la A_i w, w\ra&\mbox{if}\;\; w\in {K}_{C_i}(\oz,\ov_i),\\
\infty&\mbox{otherwise}.
\end{cases}
\end{equation}
Moreover, we have 
\begin{equation}\label{domcr}
K_g(\oz,\ov)=\bigcup_{i\in I(\oz)} {K}_{C_i}(\oz,\ov_i).
\end{equation}
  \end{itemize}
\end{Proposition}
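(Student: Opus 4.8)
The plan is to derive these properties directly from the piecewise representation \eqref{PWLQ}, rather than merely quoting \cite[Theorem~13.14]{rw} and \cite[Proposition~13.9]{rw}. The heart of the argument is an explicit evaluation of the second-order difference quotients on the active pieces. Fix $i\in I(\oz)$ and a direction $w'$ with $\oz+tw'\in C_i$ for all small $t>0$. Since $\oz\in C_i$ and $g$ coincides on $C_i$ with the quadratic $g_i(z)=\sm\la A_iz,z\ra+\la a_i,z\ra+\al_i$, a direct expansion gives
\[
\Delta_t^2 g(\oz,\ov)(w')=\la A_iw',w'\ra-\frac{2}{t}\la\ov_i,w'\ra .
\]
By the subdifferential calculus \eqref{sub} we have $\ov_i\in N_{C_i}(\oz)$, so $\la\ov_i,w'\ra\le 0$ whenever $\oz+tw'\in C_i$; hence the second term is nonnegative and blows up as $t\dn 0$ unless $\la\ov_i,w'\ra=0$. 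Passing to the liminf in the definition of the second subderivative, the value stays finite precisely along directions in $K_{C_i}(\oz,\ov_i)=T_{C_i}(\oz)\cap[\ov_i]^\perp$, on which it equals $\la A_iw,w\ra$.

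For part (b) I would then assemble the pieces: near $\oz$ only the active indices $i\in I(\oz)$ contribute, and since $\dom g=\bigcup_i C_i$, the admissible directions along which the difference quotients stay bounded form the union $\bigcup_{i\in I(\oz)}K_{C_i}(\oz,\ov_i)$, yielding both \eqref{domcr} and the two-branch formula \eqref{pwfor}. Twice epi-differentiability is then the easy part: by the polyhedral structure, for each fixed $w\in K_{C_i}(\oz,\ov_i)$ one has $\oz+tw\in C_i$ for all small $t$, so $\Delta_t^2 g(\oz,\ov)(w)=\la A_iw,w\ra$ is \emph{independent of $t$}; the difference-quotient epigraphs therefore stabilize, and $\epi\Delta_t^2 g(\oz,\ov)\to\epi\d^2 g(\oz,\ov)$ follows from \cite[Definition~13.6]{rw}.

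For part (a), the identity $K_g(\oz,\ov)=N_{\sub g(\oz)}(\ov)$ comes from convexity: $\d g(\oz)(\cdot)$ is the support function of $\sub g(\oz)$, so the defining condition $\la\ov,w\ra=\d g(\oz)(w)$ of the critical cone says exactly that $w$ attains the support-function maximum at $\ov$, which is the normal-cone condition $w\in N_{\sub g(\oz)}(\ov)$. The decomposition \eqref{cc2} is obtained by comparing the $\ov$-dependent second subderivative with the $\ov$-free one in \eqref{ssud}: on $K_g(\oz,\ov)$ one has $\d g(\oz)(w)=\la A_i\oz+a_i,w\ra$, so the same expansion shows that both reduce to $\la A_iw,w\ra$ there, while off the critical cone the $\ov$-version is $+\infty$ because the divergent term $-\frac{2}{t}\la\ov_i,w'\ra$ survives; hence the two differ by exactly the indicator $\dd_{K_g(\oz,\ov)}$.

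The main obstacle is the \emph{coherence} of \eqref{pwfor} across overlapping pieces: if $w$ lies in $K_{C_i}(\oz,\ov_i)\cap K_{C_j}(\oz,\ov_j)$ for two distinct active indices, the branch formula is well defined only provided $\la A_iw,w\ra=\la A_jw,w\ra$. This is not automatic from the raw data $(A_i,a_i,\al_i)$; it must be extracted from the fact that $g$ is single-valued, so that $g_i$ and $g_j$ agree on $C_i\cap C_j$ and their gradients match along common tangent directions, which forces the quadratic forms to coincide on the shared critical directions. Handling this matching carefully, together with verifying that the liminf over $w'\to w$ is not lowered by nearby directions that leave $C_i$, is where the bulk of the technical work lies.
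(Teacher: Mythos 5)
Your proposal is correct in substance, but it takes a genuinely different route from the paper: the paper offers no proof at all, simply importing the statement from \cite[Theorem~13.14]{rw} and \cite[Proposition~13.9]{rw}, whereas you re-derive it from scratch via the explicit expansion $\Delta_t^2 g(\oz,\ov)(w')=\la A_i w',w'\ra-\tfrac{2}{t}\la \ov_i,w'\ra$ on active pieces. Your computation is sound: the sign condition $\la\ov_i,w'\ra\le 0$ from $\ov_i\in N_{C_i}(\oz)$, the blow-up off $[\ov_i]^\perp$, the recovery sequences $w'\equiv w$ (valid because polyhedrality gives $\oz+tw\in C_i$ for small $t$ whenever $w\in T_{C_i}(\oz)$), and, crucially, the coherence issue you flag is real and your resolution is the right one --- single-valuedness of $g$ on $C_i\cap C_j$ forces, by matching coefficients of the polynomial $t\mapsto g(\oz+tw)$, both $\la A_i\oz+a_i,w\ra=\la A_j\oz+a_j,w\ra$ and $\la A_iw,w\ra=\la A_jw,w\ra$ on shared critical directions; this first identity is also what closes the small gap you leave implicit in \eqref{domcr}, since $\d g(\oz)(w)=\la A_i\oz+a_i,w\ra$ on $T_{C_i}(\oz)$ identifies the first-order critical cone $K_g(\oz,\ov)$ with the union of the $K_{C_i}(\oz,\ov_i)$. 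Two refinements would tighten the write-up: first, your phrase that the ``difference-quotient epigraphs stabilize'' is loose --- what is actually true is that the outer-limit inclusion $\limsup_t\epi\Delta_t^2 g(\oz,\ov)\subset\epi\d^2 g(\oz,\ov)$ holds automatically because the second subderivative is by definition the lower epi-limit, so twice epi-differentiability reduces to exhibiting recovery sequences, which your constant sequences provide; second, in the lower-bound half of \eqref{pwfor} you should pass to a subsequence along which the piece index $j$ is constant (finitely many pieces), then note that a finite liminf forces $\la\ov_j,w\ra=0$ and $w\in T_{C_j}(\oz)$, whereupon coherence converts the bound $\la A_jw,w\ra$ into $\la A_iw,w\ra$. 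What each approach buys: the paper's citation is shorter and rests on results proved in \cite{rw} for general, possibly nonconvex PLQ functions; your argument is self-contained, elementary, and makes visible exactly where convexity and the polyhedral geometry enter --- a genuine pedagogical gain, at the cost of a page of bookkeeping the paper avoids.
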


The   Karush-Kuhn-Tucker (KKT)   system associated with the composite problem \eqref{comp} is given by 
\begin{equation}\label{vs} 
0\in \nabla_x L(x,\lm)+N_\Th(x),\;\;\lm\in \sub g(\Phi(x)),
\end{equation}
where $L(x,\lm)= \ph(x)+\la \Phi(x),\lm\ra$ for any   $(x,\lm)\in \R^n\times \R^m$. 
Given a point $\ox\in\R^n$, we define the set of {  Lagrange multipliers} of the KKT system \eqref{vs} associated with $\ox$ by
\begin{equation}\label{laset}
\Lambda(\ox):=\big\{\lm\in\R^m\;\big|\; 0\in \nabla_x L(\ox,\lm)+N_\Th(\ox),\;\lm\in \sub g(\Phi(\ox))\big\}.
\end{equation}
If  $(\ox,\olm)$ is a solution to the KKT  system \eqref{vs},   we clearly have   $\olm\in\Lambda(\ox)$.
 Given  a solution $(\ox,\olm)$  to \eqref{vs}, the second-order sufficient condition (SOSC)
for the composite problem \eqref{comp} at $(\ox,\olm)$ is formulated by 
\begin{equation}\label{sosc}
\langle\nabla_{xx}^2L(\bar x,\olm)w,w\rangle+\d^2g (\Phi(\bar x),\olm) (\nabla \Phi(\ox)w )>0\quad \mbox{for all }\; w\in \D\setminus\{0\},
\end{equation}
where the convex cone $\D$ is defined by 
 \begin{equation}\label{coned}
 \D:=K_\Th\big(\ox,-\nabla_x L(\ox,\olm)\big)\cap \big\{w\in \R^n\big|\; \nabla \Phi(\ox)w\in K_g \big(\Phi(\ox),\olm \big)\big\}.
 \end{equation}
When $g$ is the  indicator function of a polyhedral convex set and $\Th=\R^n$,
one can observe that  this SOSC boils down to that of  classical nonlinear programming problems. 
To demonstrate the importance of this condition in the local convergence analysis of the ALM for \eqref{comp}, we recall below  
a result, obtained recently in \cite{s20}, showing that it yields an error bound estimate for the KKT system \eqref{vs}, which is central to our   local convergence analysis  in this paper. To this end,  recall that, given a constant $r>0$,    the proximal mapping of  a convex function $g:\R^m\to \oR$ is defined by 
\begin{equation*}\label{pr}
{\rm{prox}}_{rg}(x):=\underset{z\in \R^n}{\argmin} \big\{g(z)+\rsm\|x-z\|^2\big\},\quad x\in\R^m.
\end{equation*}
In what follows, when $r=1$, the proximal mapping of $g$ will be denoted by ${\rm{prox}_g}$. For such a convex function, 
it is known that for any $x\in \R^m$ we have 
\begin{equation}\label{prm}
{\rm{prox}}_{rg}(x)=\big(I+r\sub g\big)^{-1}(x) \quad \mbox{and}\quad \nabla e_rg(x)=\big(rI+(\sub g)^{-1}\big)^{-1}(x),
\end{equation}
where $I$ stands for the $m\times m$ identity matrix; see \cite[Proposition~12.19]{rw} and \cite[Theorem~2.26]{rw} for more details. 
We are now in a position to state the aforementioned result for the KKT system \eqref{vs}, which comes from combining  \cite[Theorem~3.6]{s20} and  \cite[Propositions~3.8 and 3.9]{s20}.
 \begin{Theorem}[error bound via second-order optimality conditions]\label{sooc} 
 Assume that $(\ox,\olm)$ is a solution to the KKT system \eqref{vs}. If the SOSC \eqref{sosc} holds at $(\ox,\olm)$, then
 there are positive constants $\gg_1$ and $\kappa_1$ for which  the error bound estimate
\begin{equation}\label{subr3}
\|x-\ox\|+\dist\big(\lm,\Lambda(\ox)\big)\le\kappa_1\Big(\dist\big(-\nabla_x L(x,\lm),N_\Th(x)\big)+\| \Phi(x)-{\rm{prox}}_g\big(\lm+ \Phi(x)\big)\|\Big)
\end{equation}
holds for any $(x,\lm)\in\B_{\gg_1}(\ox,\olm)$.
 \end{Theorem}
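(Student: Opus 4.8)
The plan is to read the left-hand side of \eqref{subr3} as the distance from $(x,\lm)$ to the set $\{\ox\}\times\Lambda(\ox)$ of KKT pairs whose primal part is $\ox$, so that \eqref{subr3} is nothing but a metric subregularity (error bound) estimate for the KKT system \eqref{vs} at $(\ox,\olm)$. I would argue by contradiction: if no admissible constants $\gg_1,\kappa_1$ exist, there is a sequence $(x^k,\lm^k)\to(\ox,\olm)$ with
\[
t_k:=\|x^k-\ox\|+\dist(\lm^k,\Lambda(\ox))>0
\]
for which the right-hand residual is $o(t_k)$. Writing $\hat\lm^k$ for the projection of $\lm^k$ onto $\Lambda(\ox)$, I normalize by $w^k:=(x^k-\ox)/t_k$ and $\mu^k:=(\lm^k-\hat\lm^k)/t_k$, so $\|w^k\|+\|\mu^k\|=1$, and pass to a subsequence with $(w^k,\mu^k)\to(w,\mu)$, $\|w\|+\|\mu\|=1$, and $\hat\lm^k\to\olm$.

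The next step is to turn the two residual terms into exact incidence relations. Using \eqref{prm}, the fact that $\|\Phi(x^k)-{\rm prox}_g(\lm^k+\Phi(x^k))\|=o(t_k)$ produces a point $(z^k,\tilde\lm^k)\in\gph\sub g$ with $z^k=\Phi(x^k)+o(t_k)$ and $\tilde\lm^k=\lm^k+o(t_k)$; hence $z^k\to\Phi(\ox)$, $\tilde\lm^k\to\olm$, and $(z^k-\Phi(\ox))/t_k\to\nabla\Phi(\ox)w$. Similarly, the smallness of $\dist(-\nabla_xL(x^k,\lm^k),N_\Th(x^k))$ forces $x^k\in\Th$ and yields $\eta^k\in N_\Th(x^k)$ with $-\nabla_xL(x^k,\lm^k)=\eta^k+o(t_k)$. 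Exploiting the polyhedrality of $\Th$ to reduce $N_\Th(x^k)$ to $N_\Th(\ox)$ near $\ox$, together with the piecewise linear–quadratic structure of $g$ from Proposition~\ref{sop}, I would show that the limiting primal direction lands in the critical cone, i.e. $w\in\D$.

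The contradiction is then extracted by a second-order expansion. Testing the two exact relations against the increments and using the twice epi-differentiability of $g$ together with the formulas \eqref{cc2}--\eqref{pwfor} for $\d^2g(\Phi(\ox),\olm)$, the $o(t_k)$ residual bound delivers
\[
\langle\nabla_{xx}^2L(\ox,\olm)w,w\rangle+\d^2g(\Phi(\ox),\olm)(\nabla\Phi(\ox)w)\le 0 .
\]
Since $w\in\D$, the SOSC \eqref{sosc} forces $w=0$. To rule out $\mu\ne0$, observe that once $w=0$ the problem collapses to a purely dual, polyhedral inclusion, and a Hoffman-type error bound for the polyhedral set $\Lambda(\ox)$ shows that $\dist(\lm^k,\Lambda(\ox))$ is itself controlled by the residual, so $\mu=0$. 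This contradicts $\|w\|+\|\mu\|=1$ and proves the estimate.

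I expect the nonuniqueness of the Lagrange multiplier to be the main obstacle. Under a naive normalization the component of $\lm^k-\olm$ tangent to $\Lambda(\ox)$ need not be $O(t_k)$, so one cannot simply linearize about $\olm$; the remedy is to normalize relative to the moving projection $\hat\lm^k$ and to use that the second subderivative of $g$ and the critical cone $\D$ are insensitive to displacements of the multiplier within the polyhedral set $\Lambda(\ox)$. Coupling the second-order argument, which only controls the primal direction $w$, with the polyhedral error bound, which controls the transversal dual direction $\mu$, is the delicate point, and it is here that both the CPLQ structure of $g$ and the polyhedrality of $\Th$ are indispensable.
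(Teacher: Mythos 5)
You cannot be compared against an internal argument here, because the paper never proves Theorem~\ref{sooc}: it imports the result from \cite{s20}, combining Theorem~3.6 there (the error bound \eqref{subr3} is \emph{equivalent} to $\olm$ being a \emph{noncritical} multiplier) with Propositions~3.8 and 3.9 there (the SOSC \eqref{sosc} implies noncriticality). Your proposal takes a genuinely different, self-contained route that bypasses noncriticality, and its skeleton is sound. The prox-residual being $o(t_k)$ does produce $(z^k,\tilde\lm^k)\in\gph\sub g$ with $z^k=\Phi(x^k)+o(t_k)$ and $\tilde\lm^k=\lm^k+o(t_k)$; pairing the difference of the two stationarity inclusions with $x^k-\ox$ (monotonicity of $N_\Th$) gives $\langle\nabla^2_{xx}L(\ox,\olm)w,w\rangle+\langle\mu,\nabla\Phi(\ox)w\rangle\le 0$, while pairing the difference of the two subgradient inclusions with $z^k-\Phi(\ox)$ -- after fixing, along a subsequence, one active piece $C_i$ with $I(z^k)\subset I(\Phi(\ox))$ and using monotonicity of $N_{C_i}$ together with \eqref{pwfor} -- gives $\langle\mu,\nabla\Phi(\ox)w\rangle\ge\d^2g(\Phi(\ox),\olm)(\nabla\Phi(\ox)w)$; since one also checks $w\in\D$ exactly as you indicate, the SOSC forces $w=0$. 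Your order of operations is the essential point and you have it right: only after $w=0$ are the dual residuals \emph{at} $\ox$ of size $o(t_k)$, so that Robinson's upper Lipschitz continuity of the polyhedral mapping $\sub g$ and Hoffman's bound for the polyhedral set $\Lambda(\ox)$ control $\dist(\lm^k,\Lambda(\ox))$; attempting the dual estimate first would fail. As for what each route buys: the factored route of \cite{s20} isolates the exact, strictly weaker hypothesis -- noncriticality is necessary and sufficient for \eqref{subr3}, a sharpening the paper explicitly records right after the theorem and which your argument cannot see, since you consume the full strength of the SOSC; conversely, your route avoids introducing an auxiliary concept, and in fact the only use you make of the SOSC is to exclude a nonzero $w$ satisfying your two displayed relations, which is precisely the noncriticality property in disguise, so your proof could be weakened to that hypothesis with no structural change.
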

 
 Note that the SOSC \eqref{sosc} is strictly stronger  than the error bound estimate \eqref{subr3}; see \cite{s20} for more details. Indeed, it 
 was shown in \cite[Theorem~3.6]{s20} that the latter error bound estimate is equivalent to the fact that the Lagrange multiplier $\olm$ therein is {\em noncritical}
 in sense of \cite[Definition~3.1]{s20}. While this may suggest that the SOSC \eqref{sosc} can be replaced with the latter noncriticality assumption to 
 ensure the error bound estimate \eqref{subr3}, it is not clear whether this can be done for all other results, established in this paper.

 \section{Variational Properties of Augmented Lagrangians}\sce\label{sect03}

This section aims to present important first- and second-order variational properties of the augmented Lagrangian \eqref{aug}
that play a fundamental role in the local convergence analysis of the ALM in this paper. We begin with recalling the behavior 
of the augmented Lagrangian \eqref{aug} with respect to the multiplier $\lm$ and the parameter $\rho$. We also 
show that the augmented Lagrangian of the composite problem \eqref{comp} is fully amenable in the sense of \cite[Definition~10.23]{rw}: 
A function $f:\R^n\to \oR$ is called fully amenable at $\ox\in \dom f$ if there is a neighborhood ${\cal O}$ of $\ox$ on which 
$f$ can be represented in the form $f=\psi\circ h$, where $\psi:\R^m\to \oR$ is a CPLQ function and   $h:\R^n\to \R^m$ is a ${\cal C}^2$ function, and the 
basic constraint qualification 
\begin{equation}\label{bcq}
N_{\ss \dom \psi}\big(h(\ox)\big)\cap \ker \nabla h(\ox)^*=\{0\}
\end{equation}
is satisfied. 

\begin{Proposition}[properties of the augmented Lagrangian]\label{paug}
Assume that  $(x, \lm, \rho)\in \R^n\times \R^m\times (0, \infty)$ and $\ox\in \R^n$. Then the following conditions hold for the  augmented Lagrangian   \eqref{aug}:
\begin{enumerate}[noitemsep,topsep=0pt]
\item the function $x \mapsto \L(x, \lm, \rho)$ is $\mathcal{C}^1$ and fully amenable at $\ox$;
\item the function $\lm \mapsto \L(x, \lm, \rho)$ is concave;
\item the function $\rho \mapsto \L(x, \lm, \rho)$ is nondecreasing.
\end{enumerate}
\end{Proposition}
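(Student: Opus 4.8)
The plan is to base the whole proof on a single infimal reformulation of the augmented Lagrangian. Completing the square in the Moreau envelope gives $\frac{\rho}{2}\|\Phi(x)+\rho^{-1}\lm-z\|^2=\frac{\rho}{2}\|\Phi(x)-z\|^2+\langle\lm,\Phi(x)-z\rangle+\frac{1}{2\rho}\|\lm\|^2$, so that the term $-\frac{1}{2\rho}\|\lm\|^2$ in \eqref{aug} cancels and we obtain
\[
\L(x,\lm,\rho)=\inf_{z\in\R^m}\Big\{\ph(x)+g(z)+\langle\lm,\Phi(x)-z\rangle+\tfrac{\rho}{2}\|\Phi(x)-z\|^2\Big\}.
\]
This identity is the workhorse for parts (b) and (c).

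For (b), I fix $x$ and $\rho$ and regard the bracketed quantity as a function of $\lm$: for each fixed $z$ it is affine in $\lm$, and the pointwise infimum of a family of affine functions is concave, whence $\lm\mapsto\L(x,\lm,\rho)$ is concave. For (c), I fix $x$ and $\lm$; for each fixed $z$ the bracket is nondecreasing in $\rho$, since only the term $\tfrac{\rho}{2}\|\Phi(x)-z\|^2\ge 0$ depends on $\rho$ and it is nondecreasing, and the infimum of a family of nondecreasing functions is nondecreasing, giving that $\rho\mapsto\L(x,\lm,\rho)$ is nondecreasing.

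For (a) I would treat smoothness and amenability separately. Since $g$ is proper, lsc, and convex, its Moreau envelope $e_{1/\rho}g$ is finite-valued and $\mathcal{C}^1$ on all of $\R^m$, with gradient given by the second identity in \eqref{prm}; combined with the chain rule and the fact that $\ph,\Phi$ are $\mathcal{C}^2$, this shows that $x\mapsto\L(x,\lm,\rho)$ is $\mathcal{C}^1$, with $\nabla_x\L(x,\lm,\rho)=\nabla\ph(x)+\nabla\Phi(x)^*\nabla e_{1/\rho}g(\Phi(x)+\rho^{-1}\lm)$. For full amenability, the first step is to record that, since $g$ is CPLQ, its Moreau envelope $e_{1/\rho}g$ is again CPLQ, being the infimal convolution of the CPLQ functions $g$ and $\tfrac{\rho}{2}\|\cdot\|^2$; hence $\theta(u):=e_{1/\rho}g(u+\rho^{-1}\lm)-\frac{1}{2\rho}\|\lm\|^2$ is a finite-valued convex CPLQ function on $\R^m$. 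I would then exhibit the required composite representation in one shot by enlarging the inner map: set $\tilde\Phi(x):=(\Phi(x),\ph(x))\in\R^{m+1}$, which is $\mathcal{C}^2$, and $\tilde\psi(u,t):=\theta(u)+t$, which is finite-valued convex CPLQ on $\R^{m+1}$ because adjoining the linear coordinate $t$ preserves the CPLQ structure. Then $\L(\cdot,\lm,\rho)=\tilde\psi\circ\tilde\Phi$, and since $\dom\tilde\psi=\R^{m+1}$ the basic constraint qualification \eqref{bcq} becomes $N_{\R^{m+1}}(\tilde\Phi(\ox))\cap\ker\nabla\tilde\Phi(\ox)^*=\{0\}\cap\ker\nabla\tilde\Phi(\ox)^*=\{0\}$, which holds automatically; thus $\L(\cdot,\lm,\rho)$ is fully amenable at $\ox$.

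The only genuinely nontrivial inputs are the two standard facts about Moreau envelopes of convex CPLQ functions, namely that $e_{1/\rho}g$ is finite-valued and $\mathcal{C}^1$ on all of $\R^m$ and that it is itself CPLQ, both of which I would cite from \cite{rw}. Once these are in hand, the crux of (a) is the packaging trick of folding the smooth term $\ph$ into an extra linear coordinate of the outer CPLQ function, which lets \eqref{bcq} hold trivially precisely because the Moreau envelope has full domain. I expect this full-domain observation to be the main, albeit modest, obstacle, since without it one would instead have to invoke a separate sum rule for amenable functions to combine the smooth part $\ph$ with the composite part $\theta\circ\Phi$.
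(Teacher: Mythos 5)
Your proposal is correct, and it reaches the three conclusions by a more self-contained route than the paper. For parts (b) and (c), the paper simply cites \cite[Exercise~11.56]{rw}, whereas you prove them directly from the infimal reformulation $\L(x,\lm,\rho)=\inf_{z}\{\ph(x)+g(z)+\langle\lm,\Phi(x)-z\rangle+\tfrac{\rho}{2}\|\Phi(x)-z\|^2\}$ (your completion-of-the-square identity is right, and this is exactly Rockafellar's perturbation form of the augmented Lagrangian); the infimum-of-affine-functions and infimum-of-nondecreasing-functions arguments are both sound, so you have effectively reproduced the content of the cited exercise. For part (a), both you and the paper rest on the same two pillars --- the Moreau envelope of a CPLQ function is CPLQ (\cite[Proposition~12.30]{rw}) and has full domain, so the basic constraint qualification \eqref{bcq} holds trivially --- but you combine the smooth term $\ph$ with the composite term differently: the paper treats $\ph$ as a fully amenable function in its own right and invokes the sum rule \cite[Exercise~10.22]{rw} (sum of $\mathcal{C}^1$ fully amenable functions is fully amenable), while you fold $\ph$ into an extra coordinate, writing $\L(\cdot,\lm,\rho)=\tilde\psi\circ\tilde\Phi$ with $\tilde\Phi=(\Phi,\ph)$ and $\tilde\psi(u,t)=\theta(u)+t$. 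This packaging is legitimate: adjoining a linear coordinate to a finite-valued convex CPLQ function produces a convex CPLQ function on $\R^{m+1}$ (each polyhedral piece becomes $C_i\times\R$ and the quadratic representation persists), and convexity is preserved, which matters since full amenability requires the outer function to be \emph{convex} PLQ. What your approach buys is the elimination of two external citations (the sum rule and Exercise~11.56) at the cost of a slightly longer argument and the small verification about adjoining a coordinate; what the paper's approach buys is brevity. Both hinge on the same essential observation you correctly identify as the crux: $\dom e_{1/\rho}g=\R^m$ makes \eqref{bcq} automatic.
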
 

\begin{proof} Let $(x, \lm, \rho)\in \R^n\times \R^m\times (0, \infty)$ and $\ox\in \R^n$. To justify (a), it follows from \cite[Proposition~12.30]{rw} that the envelope function $\menv$ is CPLQ. 
Moreover, the envelope function $\menv$  is continuously differentiable on $\R^m$ and thus the function $x\mapsto e_{ {1}/{\rho}} g \big(\Phi(x)+\rho^{-1}\lm\big)$
is $\mathcal{C}^1$ and fully amenable at $\ox$. In this case, the basic qualification condition \eqref{bcq} holds automatically at any $\ox\in \R^n$ since $\dom \menv=\R^m$. Clearly the mapping $x\mapsto \ph(x)$
is fully amenable at $\ox$ since $\ph$ is ${\cal C}^2$. Combining these and remembering that the sum of two ${\cal C}^1$ and fully amenable functions is fully amenable (cf. \cite[Exercise~10.22]{rw})
indicate that the augmented Lagrangian \eqref{aug} is fully amenable at $\ox$. Parts (b) and (c) were established in \cite[Exercise~11.56]{rw}.
\end{proof} 

The next result shows that    KKT points of the composite problem \eqref{comp} are KKT points of the augmented problem \eqref{augp}.

\begin{Proposition}[propagation of stationary conditions]\label{fopag}
Let $(\ox,\olm)$ be a solution to the KKT system \eqref{vs}. Then for any $\rho>0$,  the following conditions are satisfied:
\begin{enumerate}[noitemsep,topsep=0pt]
\item we always have  $\L(\ox,\olm,\rho)=\ph(\ox)+g\big(\Phi(\ox)\big)$;
\item $\ox$ is a stationary point of the augmented  problem 
\begin{equation}\label{ap}
\mini\; \L(x,\olm,\rho)\quad \mbox{subject to}\;\; x\in \Th.
\end{equation}
\end{enumerate}
\end{Proposition}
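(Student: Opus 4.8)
The plan is to reduce both claims to a single observation: the proximal point $\prox\big(\Phi(\ox)+\rho^{-1}\olm\big)$ equals $\Phi(\ox)$. This is a direct consequence of the inclusion $\olm\in\sub g(\Phi(\ox))$ built into the KKT system \eqref{vs}. Indeed, the resolvent formula in \eqref{prm} (read with parameter $r=\rho^{-1}$) gives $\prox=(I+\rho^{-1}\sub g)^{-1}$, so $\prox\big(\Phi(\ox)+\rho^{-1}\olm\big)=\Phi(\ox)$ is equivalent to $\rho^{-1}\olm\in\rho^{-1}\sub g(\Phi(\ox))$, i.e. to $\olm\in\sub g(\Phi(\ox))$, which holds by hypothesis. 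I would record this identity first and then invoke it in both parts.

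For part (a), once $\Phi(\ox)$ is identified as the (unique) minimizer in the definition of the Moreau envelope, substituting $z=\Phi(\ox)$ yields $\menv\big(\Phi(\ox)+\rho^{-1}\olm\big)=g(\Phi(\ox))+\frac{\rho}{2}\|\rho^{-1}\olm\|^2=g(\Phi(\ox))+\sm\rho^{-1}\|\olm\|^2$. Inserting this into the definition \eqref{aug} of $\L$, the two copies of $\sm\rho^{-1}\|\olm\|^2$ cancel and leave $\L(\ox,\olm,\rho)=\ph(\ox)+g(\Phi(\ox))$, which is the assertion of (a).

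For part (b), I would note that $\ox\in\Th$ is stationary for \eqref{ap} precisely when $0\in\nabla_x\L(\ox,\olm,\rho)+N_\Th(\ox)$, so it suffices to compute $\nabla_x\L(\ox,\olm,\rho)$. Since the last term of \eqref{aug} does not depend on $x$ and $\menv$ is $\mathcal{C}^1$ by Proposition~\ref{paug}(a), the chain rule gives $\nabla_x\L(x,\olm,\rho)=\nabla\ph(x)+\nabla\Phi(x)\a\,\nabla\menv\big(\Phi(x)+\rho^{-1}\olm\big)$. Using the standard gradient identity $\nabla\menv(y)=\rho\big(y-\prox(y)\big)$ together with $\prox\big(\Phi(\ox)+\rho^{-1}\olm\big)=\Phi(\ox)$, I obtain $\nabla\menv\big(\Phi(\ox)+\rho^{-1}\olm\big)=\rho\cdot\rho^{-1}\olm=\olm$, and therefore $\nabla_x\L(\ox,\olm,\rho)=\nabla\ph(\ox)+\nabla\Phi(\ox)\a\olm=\nabla_x L(\ox,\olm)$. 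The first inclusion in \eqref{vs} then delivers $0\in\nabla_x L(\ox,\olm)+N_\Th(\ox)=\nabla_x\L(\ox,\olm,\rho)+N_\Th(\ox)$, which is the claimed stationarity.

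None of these steps is difficult; the only place demanding care is the bookkeeping of the envelope parameter, since the paper's convention $\menv(y)=\inf_z\{g(z)+\frac{\rho}{2}\|y-z\|^2\}$ corresponds to $r=\rho^{-1}$ in \eqref{prm}. One must therefore consistently use $(I+\rho^{-1}\sub g)^{-1}$ and $\nabla\menv(y)=\rho(y-\prox(y))$ rather than their $\rho$-scaled counterparts; with the parameter fixed correctly, both parts follow at once from the single equality $\prox\big(\Phi(\ox)+\rho^{-1}\olm\big)=\Phi(\ox)$, which is the real content propagated from the KKT system to the augmented problem.
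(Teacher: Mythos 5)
Your proof is correct and follows essentially the same route as the paper's: both hinge on the fixed-point identity $\prox\big(\Phi(\ox)+\rho^{-1}\olm\big)=\Phi(\ox)$ obtained from $\olm\in\sub g\big(\Phi(\ox)\big)$ and the first equality in \eqref{prm}, from which (a) follows by substituting the minimizer into the envelope and (b) follows by showing $\nabla_x\L(\ox,\olm,\rho)=\nabla_x L(\ox,\olm)$ and invoking the KKT inclusion \eqref{vs}. The only cosmetic difference is in (b): the paper obtains $\nabla(e_{1/\rho}g)\big(\Phi(\ox)+\rho^{-1}\olm\big)=\olm$ directly from the resolvent formula (the second equality in \eqref{prm}), whereas you derive it from the prox identity via $\nabla\menv(y)=\rho\big(y-\prox(y)\big)$; both are equivalent consequences of \cite[Theorem~2.26]{rw}, so the arguments are interchangeable.
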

\begin{proof} Given $\rho>0$, it follows from the first equality in \eqref{prm} and  $\olm\in \sub g\big(\Phi(\ox)\big)$ that 
 $$
 \prox\big(\Phi(\ox)+\rho^{-1}\olm\big)=\Phi(\ox),
 $$ which clearly proves (a).
We proceed now with the proof of  (b). Since $g$ is convex, we conclude from the second equality in \eqref{prm} that the condition $\olm\in \sub g\big(\Phi(\ox)\big)$ is equivalent to $\nabla(e_{ {1}/{\rho}} g)\big(\Phi(\ox)+\rho^{-1}\olm\big)=\olm$
for any $\rho>0$.  Thus we get 
\begin{equation}\label{kkt2}
\nabla_x \L(\ox,\olm,\rho)=\nabla \ph(\ox)+ \nabla \Phi(\ox)^*\nabla(e_{ {1}/{\rho}} g)(\Phi(\ox)+\rho^{-1}\olm)=\nabla_x L(\ox,\olm).
\end{equation}
Combining these and \eqref{vs} justifies  (b).
\end{proof}

The next result presents a characterization of the quadratic growth condition of the augmented problem \eqref{augp} via the second subderivative 
of the augmented Lagrangian. Such a characterization has important applications in the next section,  where we will show that  the quadratic growth condition 
for the augmented problem \eqref{augp}  amounts to  the SOSC \eqref{sosc}. 

\begin{Proposition}[quadratic growth condition for augmented Lagrangians]\label{2ndgr}
Let $(\ox,\olm)$ be a solution to the KKT system \eqref{vs} and set $\ov:= \nabla_x L(\ox,\olm)$. Then for any $\rho>0$,  the following conditions are equivalent: 
\begin{enumerate}[noitemsep,topsep=0pt]
\item    there exists a constant $\bar\rho>0$ such that for any $\rho\ge\bar\rho$ the second-order condition 
\begin{equation}\label{eq26}
\d^2_x\L\big((\ox,\olm,\rho),\ov\big)(w)>0\;\mbox{ for all }\;w\in K_\Th (\ox,-\ov ) \setminus\{0\}
\end{equation}
holds;
 \item  there exist positive constants $\bar\rho$, $\gamma$, and $\ell$ such that for any $\rho\ge\bar \rho$, the quadratic growth condition 
 \begin{equation}\label{eq16}
\L(x,\olm,\rho)\ge \ph(\ox)+g\big(\Phi(\ox)\big)+\ell\| x-\bar x\|^2\quad \mbox{ for all }\;x\in\B_\gg (\ox)\cap \Th
\end{equation}
holds. 
\end{enumerate}
\end{Proposition}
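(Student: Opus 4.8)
The plan is to fix $\rho>0$, recast the growth estimate \eqref{eq16} as an ordinary quadratic growth condition for the single function $f_\rho:=\L(\cdot,\olm,\rho)+\dd_\Th$ at $\ox$, and then characterize that growth through the second subderivative of $f_\rho$. First I would record, using Proposition~\ref{fopag}, that $\L(\ox,\olm,\rho)=\ph(\ox)+g(\Phi(\ox))$ and that $\nabla_x\L(\ox,\olm,\rho)=\ov$, while the KKT inclusion \eqref{vs} gives $-\ov\in N_\Th(\ox)$. Hence $0\in\sub f_\rho(\ox)$, and since $\dd_\Th$ vanishes on $\Th$, the estimate \eqref{eq16} is literally
\[
f_\rho(x)\ge f_\rho(\ox)+\ell\|x-\ox\|^2\qquad\text{for }x\in\B_\gg(\ox),
\]
namely the quadratic growth of the lower semicontinuous function $f_\rho$ at its stationary point $\ox$.

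The substantive step is a decomposition of $\d^2 f_\rho(\ox,0)$. Because $x\mapsto\L(x,\olm,\rho)$ is $\mathcal{C}^1$ and fully amenable at $\ox$ by Proposition~\ref{paug}(a), it is twice epi-differentiable there; and since $\Th$ is polyhedral, $\dd_\Th$ is CPLQ with $\d^2\dd_\Th(\ox,-\ov)(w)=\dd_{K_\Th(\ox,-\ov)}(w)$ by the single-piece instance ($A_1=0$, $C_1=\Th$) of Proposition~\ref{sop}. Invoking the sum rule for second subderivatives — legitimate here because the first summand is smooth — I would obtain
\[
\d^2 f_\rho(\ox,0)(w)=\d^2_x\L\big((\ox,\olm,\rho),\ov\big)(w)+\dd_{K_\Th(\ox,-\ov)}(w),\qquad w\in\R^n.
\]
A one-line computation with the $\mathcal{C}^1$ subderivative also identifies the critical cone of $f_\rho$ at $\ox$ for $0$ as $\{w\in T_\Th(\ox)\mid\la\ov,w\ra=0\}=K_\Th(\ox,-\ov)$, using $-\ov\in N_\Th(\ox)$. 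The displayed decomposition then shows that condition \eqref{eq26} (positivity of $\d^2_x\L$ on $K_\Th(\ox,-\ov)\setminus\{0\}$) is equivalent to $\d^2 f_\rho(\ox,0)(w)>0$ for all $w\neq 0$, because off $K_\Th(\ox,-\ov)$ the indicator term forces the value $+\infty$.

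It then remains to prove, for the fixed $\rho$, that $\d^2 f_\rho(\ox,0)(w)>0$ for all $w\neq0$ is equivalent to quadratic growth of $f_\rho$ at $\ox$; this holds for any lower semicontinuous $f_\rho$ and is the routine part. The forward implication is immediate: quadratic growth gives $\Delta_t^2 f_\rho(\ox,0)(w')\ge 2\ell\|w'\|^2$, and passing to the liminf in \eqref{ssd} yields $\d^2 f_\rho(\ox,0)(w)\ge 2\ell\|w\|^2>0$. For the converse I would argue by contradiction: if growth fails for every choice of constants, taking $\ell=\gg=1/k$ produces $x_k\to\ox$ in $\Th$ with $x_k\neq\ox$ and $f_\rho(x_k)<f_\rho(\ox)+k^{-1}\|x_k-\ox\|^2$; setting $t_k=\|x_k-\ox\|\dn0$ and passing to a cluster point $w$ of the unit vectors $w_k=t_k^{-1}(x_k-\ox)$ (so $\|w\|=1$) gives $\d^2 f_\rho(\ox,0)(w)\le 0$, contradicting positivity.

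Finally I would assemble the quantifiers over $\rho$. The per-$\rho$ equivalence just established already matches the ``$\exists\bar\rho\,\forall\rho\ge\bar\rho$'' form of both (a) and (b), the only extra demand in (b) being that $\gg$ and $\ell$ be uniform in $\rho$; this uniformity is delivered by the monotonicity of Proposition~\ref{paug}(c), since $\L(x,\olm,\rho)\ge\L(x,\olm,\bar\rho)$ lets quadratic growth at $\rho=\bar\rho$ propagate to every $\rho\ge\bar\rho$ with the \emph{same} constants. The step I expect to be the main obstacle is the second-subderivative decomposition in the second paragraph: justifying twice epi-differentiability of $f_\rho$ and the additive formula from full amenability of the smooth summand, together with the polyhedral evaluation of $\d^2\dd_\Th$; the growth-versus-positivity equivalence and the quantifier bookkeeping are comparatively routine.
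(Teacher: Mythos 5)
You follow the paper's own proof scheme: the same auxiliary function $f_\rho=\L(\cdot,\olm,\rho)+\dd_\Th$ with $0\in\sub f_\rho(\ox)$, the same decomposition $\d^2 f_\rho(\ox,0)=\d^2_x\L\big((\ox,\olm,\rho),\ov\big)+\dd_{K_\Th(\ox,-\ov)}$, the same reduction of \eqref{eq16} to quadratic growth of $f_\rho$ at $\ox$ characterized by positivity of $\d^2 f_\rho(\ox,0)$ (which the paper takes from \cite[Theorem~13.24(c)]{rw} and you prove by hand, correctly in both directions), and the same use of the monotonicity of Proposition~\ref{paug}(c) to make $\gg$ and $\ell$ independent of $\rho$. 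The quantifier bookkeeping at the end is also fine.

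The genuine gap is exactly the step you flagged: the claim that the sum rule for second subderivatives is ``legitimate here because the first summand is smooth.'' The summand $\L(\cdot,\olm,\rho)$ is only $\mathcal{C}^1$ (its gradient involves the Lipschitz but nonsmooth map $\nabla e_{1/\rho}g$), and $\mathcal{C}^1$ smoothness of a summand does \emph{not} validate this sum rule: for $f_1(x)=x^2\sin(\ln|x|)$, $f_1(0)=0$, and $f_2=-f_1$, both of class $\mathcal{C}^1$ on $\R$, one has $\d^2 f_1(0,0)(w)=\d^2 f_2(0,0)(w)=-2w^2$ yet $\d^2(f_1+f_2)(0,0)(w)=0$; the failure persists even with a polyhedral indicator as second summand (take $f_1(x_1,x_2)=-|x_2|^{4/3}$ and $\Th=\R\times\{0\}$, where $\d^2 f_1(0,0)(1,0)=-\infty$ while $\d^2(f_1+\dd_\Th)(0,0)(1,0)=0$). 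Since second-order difference quotients add exactly, the inequality ``$\ge$'' in your decomposition costs almost nothing and is all that (a)$\Rightarrow$(b) uses; but (b)$\Rightarrow$(a) needs the reverse inequality $\d^2 f_\rho(\ox,0)(w)\le\d^2_x\L\big((\ox,\olm,\rho),\ov\big)(w)$ for $w\in K_\Th(\ox,-\ov)$, and that is precisely what mere $\mathcal{C}^1$ regularity cannot deliver. Two repairs are available. The paper's: invoke full amenability of $\L(\cdot,\olm,\rho)$ from Proposition~\ref{paug}(a) together with the calculus rule \cite[Proposition~13.19]{rw}. An elementary alternative: $\L(\cdot,\olm,\rho)$ is in fact locally $\mathcal{C}^{1,1}$ (because $\nabla e_{1/\rho}g$ is globally Lipschitz), so its second-order difference quotients are equi-Lipschitz in the direction argument uniformly for small $t$, and hence the liminf defining $\d^2_x\L\big((\ox,\olm,\rho),\ov\big)(w)$ may be computed along the fixed direction $w'=w$; for $w\in K_\Th(\ox,-\ov)$ polyhedrality of $\Th$ gives $\ox+tw\in\Th$ for small $t>0$ and $\la\ov,w\ra=0$, so the difference quotients of $\dd_\Th$ vanish along that ray, which yields the missing inequality.
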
 

\begin{proof} Pick $\rho>0$ and    $x\in \R^n$, and 
 define the function $  f_\rho(x) := \L(x,\olm, \rho)+\delta_\Th(x)$.  Remember that $(\ox,\olm)$ is a solution to the KKT system  \eqref{vs}.
Since $\partial   f_\rho(\ox) = \nabla_x \L(\ox,\olm,\rho)+N_\Th(\ox)$, it follows from \eqref{kkt2} that $0\in \sub f_\rho(\ox)$. 
Employing \cite[Proposition~13.19]{rw} and Proposition~\ref{paug}(a), we conclude  that  
\begin{eqnarray}
\d^2  f_\rho (\ox, 0)(w) &=&\d^2_x\L\big((\ox,\olm,\rho),\ov\big)(w)+\d^2\delta_\Th(\ox, -\ov)(w)\nonumber \\
&=&\d^2_x\L\big((\ox,\olm,\rho),\ov\big)(w)+\delta_{K_\Th(\ox, -\ov)}(w),\label{sumr}
\end{eqnarray} 
where the formula for the second subderivative of $\dd_\Th$ was taken from \cite[Example~3.4]{mmstams}.

After this preparation,  assume that   (a)  holds. So we find   $\bar \rho>0$ for which \eqref{eq26} is satisfied. 
This, combined with \eqref{sumr}, tells us that $\d^2  f_{\bar\rho}(\ox, 0)(w)>0$  for all $w\in \R^n\setminus\{0\}$.
Employing now   \cite[Theorem~13.24(c)]{rw}, we find positive constants  $\gg$ and $\ell$ for which the quadratic 
growth condition 
$$
\L(x,\olm,\bar \rho)\ge \L(\ox,\olm,\bar \rho) +\ell\| x-\bar x\|^2 =\ph(\ox)+g\big(\Phi(\ox)\big)+\ell\| x-\bar x\|^2
$$
holds for all $x\in\B_\gg (\ox)\cap \Th$, where the last equality comes from Proposition~\ref{fopag}(a). By Proposition~\ref{paug}(c), the mapping $\rho\mapsto \L(x,\olm,  \rho)$ is nondecereasing. Combining this with the above quadratic growth condition justifies \eqref{eq16} for any $\rho\ge \bar\rho$.

Turning now to the opposite implication, assume that (b) holds and then pick $\rho\in [\bar\rho, \infty)$. By  Proposition~\ref{fopag}(a), we have $f_\rho(\ox)= \ph(\ox)+g\big(\Phi(\ox)\big)$, which yields 
$$
f_\rho(x) \ge f_\rho(\ox)+\ell\| x-\bar x\|^2\quad \mbox{ for all }\;x\in\B_\gg (\ox).
$$
This implies via the definition of the second subderivative that $\d^2  f_{\rho}(\ox, 0)(w)>0$  for all $w\in \R^n\setminus\{0\}$. Appealing now to \eqref{sumr} verifies \eqref{eq26} and hence ends the proof.
\end{proof} 

We end this section by an efficient formula for the second-order epi-derivative of the augmented Lagrangian \eqref{aug}.
Recall that a function $f:\R^n\to \oR$ is called semidifferentiable at $\ox$ if it is finite at $\ox$ and the ``$\liminf$" in \eqref{fsud} is a ``$\lim$."
Furthermore, $f$ is said to be twice semidifferentiable at $\ox$ if it is semidifferentiable at $\ox$ and  the ``$\liminf$" in \eqref{ssud} is a ``$\lim$."
The second semiderivative of $f$ at $\ox$ will be denoted by $\d^2f(\ox)$. It is known that  the twice semidifferentiability  has serious limitations to handle nonsmoothness
in variational analysis; see   \cite[p.\ 590]{rw} for a detailed discussion. As argued below, this property can be established for the augmented Lagrangian \eqref{aug}, however. Such a result was first 
appeared in \cite[Theorem~8.3]{mmstams} for parabolically regular constrained problems at their KKT points. Below we show that a similar conclusion holds for the composite problem \eqref{comp}
without restricting it to the KKT points of the latter problem.

\begin{Theorem}[second subderivatives of augmented Lagrangians]\label{ssc} 
Assume that  $(x, \lm, \rho)\in \R^n\times \R^m\times (0, \infty)$ and $\ox\in \R^n$. 
Then  the function $x\mapsto\L(x,\lm,\rho)$, defined via the augmented Lagrangian \eqref{aug}, is twice epi-differentiable at $\ox$ for $\ov:=\nabla_x \L(\ox,\lm,\rho)$ and its second-order epi-derivative 
can be calculated for any $w\in \R^n$ by
\begin{equation}\label{eq40}
\d^2_x\L\big((\ox,\lm,\rho),\ov\big)(w)=\big\langle \nabla^2_{xx}L(\ox,\mu)w,w\big\rangle+ e_{1/{2\rho}}\big(\d^2 g\big(\Phi(\ox)+\rho^{-1}(\lm-\mu),\mu\big)\big) \big(\nabla \Phi(\ox)w \big), 
\end{equation}
where $\mu=\nabla_x \big(e_{1/{\rho}}g\big)\big(\Phi(\ox)+\rho^{-1}\lm\big)$.
\end{Theorem}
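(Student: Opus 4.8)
The plan is to strip the two smooth ingredients off the augmented Lagrangian and reduce the whole statement to a single Moreau‑envelope computation. Write $\L(x,\lm,\rho)=\ph(x)+\psi(x)-\sm\rho^{-1}\|\lm\|^2$ with $\psi(x):=e_{1/\rho}g(\Phi(x)+\rho^{-1}\lm)$. The constant term plays no role in any second subderivative, and since $\ph$ is $\mathcal C^2$, adding it only augments the second‑order epi‑derivative by $\la\nabla^2\ph(\ox)w,w\ra$ and shifts the base point $\ov$ by $\nabla\ph(\ox)$. Hence it suffices to show that $\psi$ is twice epi‑differentiable at $\ox$ for $\bar v_\psi:=\nabla\Phi(\ox)^*\mu$ with
\[
\d^2\psi(\ox,\bar v_\psi)(w)=\la\mu,\nabla^2\Phi(\ox)(w,w)\ra+e_{1/(2\rho)}\big(\d^2 g(p,\mu)\big)(\nabla\Phi(\ox)w),\qquad p:=\Phi(\ox)+\rho^{-1}(\lm-\mu);
\]
combining this with the $\ph$‑term merges $\la\nabla^2\ph(\ox)w,w\ra+\la\mu,\nabla^2\Phi(\ox)(w,w)\ra$ into $\la\nabla^2_{xx}L(\ox,\mu)w,w\ra$ and delivers \eqref{eq40}. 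I first record, from the two identities in \eqref{prm}, that $p=\prox(\Phi(\ox)+\rho^{-1}\lm)$ and $\mu\in\sub g(p)$, so that $\d^2 g(p,\mu)$ is meaningful.

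Twice epi‑differentiability of $\psi$ comes for free: by Proposition~\ref{paug}(a) the map $\psi$ is $\mathcal C^1$ and fully amenable at $\ox$, and fully amenable functions are twice epi‑differentiable (cf.\ \cite{rw}). The genuine content is the formula, which I would obtain by expanding the second‑order difference quotient directly. Fix $w$, put $\bar y:=\Phi(\ox)+\rho^{-1}\lm$ and $u_0:=\nabla\Phi(\ox)w$, and Taylor‑expand $\Phi(\ox+tw)=\Phi(\ox)+t\nabla\Phi(\ox)w+\sm t^2\nabla^2\Phi(\ox)(w,w)+o(t^2)$. Writing the envelope as an infimum and parametrising the inner variable as $z=p+tv$, a completion of squares based on $\bar y-p=\rho^{-1}\mu$ and $\mu=\rho(\bar y-p)$ cancels the first‑order cross terms and yields
\[
\Delta_t^2\psi(\ox,\bar v_\psi)(w)=\la\mu,\nabla^2\Phi(\ox)(w,w)\ra+\inf_{v}\Big\{\Delta_t^2 g(p,\mu)(v)+\rho\|u_0-v\|^2\Big\}+o(1).
\]
Here the parabolic term $\la\mu,\nabla^2\Phi(\ox)(w,w)\ra$ is precisely the $\sm t^2$‑term of $\Phi$ paired with $\mu=\rho(\bar y-p)$, while the infimal‑convolution is the imprint of the Moreau envelope.

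It then remains to pass to the limit $t\dn 0$ inside the infimum. By Proposition~\ref{sop}(b), $g$ is twice epi‑differentiable at $p$ for $\mu$, so $\Delta_t^2 g(p,\mu)\overset{\ss \mbox{e}}{\to}\d^2 g(p,\mu)$; since $\rho\|u_0-\cdot\|^2$ is finite, continuous and coercive, the infimal convolution is level‑bounded and epi‑convergence is preserved under it, giving
\[
\lim_{t\dn0}\inf_{v}\Big\{\Delta_t^2 g(p,\mu)(v)+\rho\|u_0-v\|^2\Big\}=\inf_{v}\Big\{\d^2 g(p,\mu)(v)+\rho\|u_0-v\|^2\Big\}=e_{1/(2\rho)}\big(\d^2 g(p,\mu)\big)(u_0),
\]
the last equality being the definition of the Moreau envelope with parameter $1/(2\rho)$. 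Substituting $u_0=\nabla\Phi(\ox)w$ and reinstating the $\ph$‑Hessian completes the verification of \eqref{eq40}.

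The main obstacle is exactly this interchange of $\lim_{t\dn0}$ with $\inf_v$, compounded by the need to control the $o(t^2)$ remainder of $\Phi$ uniformly over the unbounded range of $v$: the displayed difference‑quotient identity holds only up to an $o(1)$ that must be shown negligible uniformly on the sets where the infimum is attained. Coercivity of $\rho\|u_0-\cdot\|^2$ confines the relevant $v$ to a bounded set, which both makes the remainder uniform and secures continuity of the infimal convolution under epi‑convergence; the CPLQ structure of $\d^2 g(p,\mu)$ recorded in Proposition~\ref{sop} (a proper polyhedral quadratic) guarantees that the resulting envelope is finite‑valued, so the pointwise limit computed above indeed coincides with the second‑order epi‑derivative supplied by full amenability.
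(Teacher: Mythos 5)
Your proposal is correct in substance, but it takes a genuinely different route from the paper's proof. The paper never touches the difference quotients of the composition: it quotes \cite[Proposition~4.1]{hms20} to obtain, in one stroke, the twice epi-differentiability of $e_{1/\rho}g$ together with the commutation formula \eqref{more}, observes via \eqref{pwfor} that the resulting second-order epi-derivative is finite-valued, and then invokes \cite[Theorem~4.3]{r20} to upgrade twice epi-differentiability of the envelope to twice \emph{semi}differentiability; after that, the chain and sum rules for second semiderivatives from \cite[Propositions~8.1 and 8.2]{mmstams} assemble \eqref{eq40}, and twice semidifferentiability is converted back into twice epi-differentiability with equal derivatives. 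The detour through semidifferentiability is precisely what spares the paper any interchange of limits: once finiteness is known, the semiderivative calculus needs no qualification conditions. You instead work directly on $\Delta_t^2\psi$ for $\psi(x)=e_{1/\rho}g(\Phi(x)+\rho^{-1}\lm)$: the completion of squares based on $\bar y-p=\rho^{-1}\mu$, the epi-convergence $\Delta_t^2 g(p,\mu)\overset {\ss \mbox{e}}{\to}\d^2 g(p,\mu)$ from Proposition~\ref{sop}(b), and the stability of Moreau envelopes under epi-convergence (\cite[Theorem~7.37]{rw}, applicable since $\Delta_t^2 g(p,\mu)\ge 0$ by convexity of $g$ and $\mu\in\sub g(p)$) in effect re-prove \cite[Proposition~4.1]{hms20} and the chain rule in one self-contained computation. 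What your approach buys is transparency and independence from \cite{hms20}, \cite{r20}, and the calculus of \cite{mmstams}; what the paper's buys is brevity and modularity, at the price of three external black boxes.

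One step you should tighten: the identification of your pointwise limit with the second-order epi-derivative. Epi-convergence of $\Delta_t^2\psi(\ox,\bar v_\psi)$ (from full amenability) together with the existence of a finite pointwise limit does \emph{not} by itself force the two limits to agree; the epi-limit can be strictly smaller than a finite pointwise limit (take $f^k\equiv 0$ except $f^k(1/k)=-1$: the pointwise limit is $0$ but the epi-limit at the origin is $-1$). So your appeal to finiteness of the envelope is not the right justification. What saves the argument is that \cite[Theorem~7.37]{rw} gives \emph{continuous} convergence of the envelopes, i.e.\ convergence along every sequence of arguments $u_t\to u_0$; feeding $u_t=\nabla\Phi(\ox)w'$ with $w'\to w$ into your expansion shows that $\liminf_{t\dn 0,\,w'\to w}\Delta_t^2\psi(\ox,\bar v_\psi)(w')$ equals the claimed formula, which is exactly the second subderivative, and continuous convergence of the quotients also yields their epi-convergence, so at that point full amenability is not even needed. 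Your closing paragraph should therefore rest the identification on the locally uniform (continuous) character of the convergence, not on finiteness of the limit function.
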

\begin{proof}  Given $\rho>0$,  it follows from  the second equality in \eqref{prm} that $\mu\in \sub g\big(\Phi(\ox)+\rho^{-1}(\lm-\mu)\big)$. 
According to Proposition~\ref{sop}(b), the CPLQ function $g$ is twice epi-differentiable at $\Phi(\ox)+\rho^{-1}(\lm-\mu)$ for $\mu$. Appealing now 
to  \cite[Proposition~4.1]{hms20} tells us that    the   envelope function $e_{1/\rho} g$ is twice epi-differentiable at $\Phi(\ox)+\rho^{-1}\lm$ for $\mu$ and that 
\begin{equation}\label{more}
\d^2 (e_{1/\rho} g )\big(\Phi(\ox)+\rho^{-1}\lm,\mu\big)=e_{1/2\rho}\big(\d^2 g(\Phi(\ox)+\rho^{-1}(\lm-\mu),\mu)\big).
\end{equation}
By \eqref{pwfor}, the second-order epi-derivative of $g$ is CPLQ. This, combined with \eqref{more}, indicates that $ e_{1/2\rho}\big(\d^2 g(\Phi(\ox)+\rho^{-1}(\lm-\mu),\mu)\big)$
is finite on $\R^m$. Employing now \cite[Theorem~4.3]{r20}, we conclude that  $e_{1/\rho}g$ is twice semidifferentiable at $\Phi(\ox)+\rho^{-1}\lm$ and its second semiderivative 
coincides with its second-order epi-derivative, namely 
\begin{equation}\label{ssed}
\d^2\big(e_{1/\rho}g\big)\big(\Phi(\ox)+\rho^{-1}\lm\big)=\d^2 (e_{1/\rho} g )\big(\Phi(\ox)+\rho^{-1}\lm,\mu\big).
\end{equation}
This, together with the chain and sum rules for twice semidifferentiability, established in  \cite[Propositions~8.2(i) and 8.1]{mmstams}, 
shows that the mapping $x\mapsto\L(x,\lm,\rho)$ is twice semidifferentiable at $\ox$. Thus, by   \cite[Propositions~8.2(iii)]{mmstams}, the latter mapping is twice epi-differentiable at $\ox$ for $\ov$ and 
\begin{equation}\label{lss}
\d_x^2{\L}(\ox,\lm,\rho)(w)=\d^2_x\L\big((\ox,\lm,\rho),\ov\big)(w), \quad w\in \R^n.
\end{equation}
By  \cite[Propositions~8.2(i) and 8.1]{mmstams},  the second  semiderivative of $x\mapsto\L(x,\lm,\rho)$ at $\ox$ can be calculated for any $w\in \R^n$ by
\begin{eqnarray*}
\d_x^2{\L}(\ox,\lm,\rho)(w)&=&\d^2\ph(\ox)(w)+\big\la\mu,\nabla^2\Phi(\ox)(w,w)\big\ra+\d^2\big(e_{1/\rho} g\big)\big(\Phi(\ox)+\rho^{-1}\lm\big)\big(\nabla \Phi(\ox)w\big)\\
&=&\la\nabla^2\ph(\ox)w,w\ra+\big\la\mu,\nabla^2\Phi(\ox)(w,w)\big\ra+\d^2\big(e_{1/\rho} g\big)\big(\Phi(\ox)+\rho^{-1}\lm,\mu\big)\big(\nabla \Phi(\ox)w\big)\\
&=&\big\la\nabla_{xx}^2L(\ox,\mu)w,w\big\ra+e_{1/2\rho}\big(\d^2 g(\Phi(\ox)+\rho^{-1}(\lm-\mu),\mu)\big)\big(\nabla \Phi(\ox)w\big),
\end{eqnarray*}
where the second equality comes from \eqref{ssed} and the last one results from \eqref{more}. This together with \eqref{lss}
justifies \eqref{eq40} and hence completes the proof.
\end{proof}

The second-order epi-derivative formula \eqref{eq40} of the augmented Lagrangian \eqref{aug} can be slightly simplified when  $(\ox, \lm)$ in Theorem~\ref{ssc}
is a solution to the KKT system \eqref{vs}, as shown below.
\begin{Corollary}[second-order epi-derivatives] Let $(\ox,\olm)$ be a solution to the KKT system \eqref{vs} and set $\ov:= \nabla_x L(\ox,\olm)$. Then for any $\rho>0$,
the function $x\mapsto\L(x,\lm,\rho)$, defined via the augmented Lagrangian \eqref{aug}, is twice epi-differentiable at $\ox$ for $\ov $ and its second-order epi-derivative 
can be calculated for any $w\in \R^n$ by
\begin{equation}\label{eq402}
\d^2_x\L\big((\ox,\olm,\rho),\ov\big)(w)=\big\langle \nabla^2_{xx}L(\ox,\olm)w,w\big\rangle+ e_{1/{2\rho}}\big(\d^2 g\big(\Phi(\ox),\olm\big)\big) \big(\nabla \Phi(\ox)w \big).
\end{equation}
\end{Corollary}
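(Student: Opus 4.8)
The plan is to obtain this as a direct specialization of Theorem~\ref{ssc} to the case $\lm=\olm$, where $(\ox,\olm)$ solves the KKT system \eqref{vs}. Once this specialization is carried out, the twice epi-differentiability of $x\mapsto\L(x,\olm,\rho)$ at $\ox$ for $\ov$ is inherited verbatim from Theorem~\ref{ssc}, so the only real task is to show that the right-hand side of \eqref{eq40} collapses to that of \eqref{eq402}.

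The key observation is that the auxiliary multiplier $\mu=\nabla_x\big(e_{1/\rho}g\big)\big(\Phi(\ox)+\rho^{-1}\olm\big)$ appearing in \eqref{eq40} coincides with $\olm$ itself. To see this I would invoke the second equality in \eqref{prm}, which identifies $\nabla(e_{1/\rho}g)$ with the resolvent $\big(\rho^{-1}I+(\sub g)^{-1}\big)^{-1}$; equivalently, for each $\rho>0$ the membership $\olm\in\sub g\big(\Phi(\ox)\big)$ is the same as $\nabla\big(e_{1/\rho}g\big)\big(\Phi(\ox)+\rho^{-1}\olm\big)=\olm$. This equivalence was already exploited in the proof of Proposition~\ref{fopag}(b). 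Since $(\ox,\olm)$ solves \eqref{vs}, we have $\olm\in\sub g\big(\Phi(\ox)\big)$, whence $\mu=\olm$. As a byproduct, this also shows $\nabla_x\L(\ox,\olm,\rho)=\nabla\ph(\ox)+\nabla\Phi(\ox)^*\olm=\nabla_x L(\ox,\olm)$, so the vector $\ov$ in Theorem~\ref{ssc} agrees with the $\ov$ in the corollary.

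With $\mu=\olm$ in hand, substituting into \eqref{eq40} is routine: the Hessian term $\nabla^2_{xx}L(\ox,\mu)$ becomes $\nabla^2_{xx}L(\ox,\olm)$, while the shifted argument simplifies via $\Phi(\ox)+\rho^{-1}(\olm-\mu)=\Phi(\ox)$, so the Moreau-envelope term reduces to $e_{1/2\rho}\big(\d^2 g(\Phi(\ox),\olm)\big)\big(\nabla\Phi(\ox)w\big)$. Combining these produces \eqref{eq402}.

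Since the whole argument reduces to the identity $\mu=\olm$ followed by a substitution, there is no genuine obstacle; the statement is a clean corollary of Theorem~\ref{ssc}. The only point worth verifying is that the resolvent characterization in \eqref{prm} is valid for every $\rho>0$ — which holds because $g$ is convex — so that $\mu=\olm$, and hence \eqref{eq402}, holds uniformly in $\rho$.
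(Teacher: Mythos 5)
Your proposal is correct and follows essentially the same route as the paper's own proof: both reduce the corollary to Theorem~\ref{ssc} by using the second equality in \eqref{prm} to show that $\mu=\olm$ (so that $\Phi(\ox)+\rho^{-1}(\olm-\mu)=\Phi(\ox)$), and by invoking the identity $\nabla_x \L(\ox,\olm,\rho)=\nabla_x L(\ox,\olm)=\ov$ from \eqref{kkt2} to match the reference vectors, after which \eqref{eq402} follows by substitution into \eqref{eq40}.
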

\begin{proof} Because  $(\ox,\olm)$ is a solution to the KKT system \eqref{vs}, we get $\olm\in \sub g\big(\Phi(\ox)\big)$.
Since $g$ is CPLQ, we conclude from the second equality in \eqref{prm} that the latter inclusion is equivalent to $\nabla(e_{ {1}/{\rho}} g)\big(\Phi(\ox)+\rho^{-1}\olm\big)=\olm$
for any $\rho>0$. This means that $\mu=\olm$, where $\mu$ was taken from Theorem~\ref{ssc}. Moreover, it follows from 
\eqref{kkt2} that $\nabla_x \L(\ox,\olm,\rho)=\nabla_x L(\ox,\olm)=\ov$.
Combining these and the second-order epi-derivative formula \eqref{eq40}, 
we arrive at \eqref{eq402}.
\end{proof}


\section{  Quadratic Growth Conditions for Augmented Lagrangians}\label{sect04} \sce

This section is devoted to the study  of the quadratic growth condition of the augmented Lagrangian \eqref{aug}.
In fact, we are going to show that such a growth condition for \eqref{aug} amounts to the validity of the SOSC \eqref{sosc} for the  composite problem \eqref{comp}.
This characterization  plays a major role in our local convergence analysis of the ALM for \eqref{comp} in the next section. The proof is heavily 
relying on the powerful theory of epi-convergence of functions as well as the characterization of the quadratic growth condition via the second subderivative.

\begin{Theorem}[characterizations of quadratic growth condition]\label{growth}
Let $(\ox,\olm)$ be a solution to the KKT system \eqref{vs} and set $\ov:= \nabla_x L(\ox,\olm)$. Then the following conditions are equivalent: 
 \begin{enumerate}[noitemsep,topsep=0pt]
 \item  the SOSC \eqref{sosc} holds at $(\ox,\olm)$;
 \item   there exists a constant $\bar\rho>0$ such that for any $\rho\ge\bar\rho$ the second-order condition \eqref{eq26} is satisfied;
 \item there exist positive constants $\bar\rho$, $\gg$, and $\ell$ such that for any $\rho\ge\bar \rho$, the quadratic growth condition \eqref{eq16} 
holds.
\end{enumerate}
\end{Theorem}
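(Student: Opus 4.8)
The plan is to deduce the theorem from the chain $(a)\Leftrightarrow(b)$ together with Proposition~\ref{2ndgr}, which already supplies $(b)\Leftrightarrow(c)$; hence all the genuine work lies in relating the SOSC to the second-order condition \eqref{eq26}. The point of departure is the Corollary following Theorem~\ref{ssc}: since $(\ox,\olm)$ solves \eqref{vs}, formula \eqref{eq402} applies, so I would introduce the two functions, positively homogeneous of degree two,
\[
\psi_\rho(w):=\big\langle \nabla^2_{xx}L(\ox,\olm)w,w\big\rangle+ e_{1/{2\rho}}\big(\d^2 g(\Phi(\ox),\olm)\big)\big(\nabla \Phi(\ox)w\big)
\]
and
\[
\psi_\infty(w):=\big\langle \nabla^2_{xx}L(\ox,\olm)w,w\big\rangle+ \d^2 g\big(\Phi(\ox),\olm\big)\big(\nabla \Phi(\ox)w\big),
\]
so that condition (b), i.e.\ \eqref{eq26}, reads precisely $\psi_\rho(w)>0$ for all $w\in K_\Th(\ox,-\ov)\setminus\{0\}$. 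That both functions are homogeneous of degree two follows from the degree-two homogeneity of the quadratic form and of the CPLQ second subderivative, together with the elementary fact that Moreau envelopes preserve this homogeneity; this is a short verification I would record first.

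Next I would recast the SOSC in the same language. By Proposition~\ref{sop}(b), formulas \eqref{pwfor} and \eqref{domcr} give $\dom \d^2 g(\Phi(\ox),\olm)=K_g(\Phi(\ox),\olm)$, so $\psi_\infty(w)=+\infty$ whenever $\nabla\Phi(\ox)w\notin K_g(\Phi(\ox),\olm)$, that is, for every $w\in K_\Th(\ox,-\ov)\setminus\D$, while $\psi_\infty(w)$ equals the left-hand side of \eqref{sosc} for $w\in\D$. Consequently the SOSC \eqref{sosc} is equivalent to the single requirement that $\psi_\infty(w)>0$ for all $w\in K_\Th(\ox,-\ov)\setminus\{0\}$, the points outside $\D$ being covered for free by the value $+\infty$.

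The engine of the argument is the monotone behaviour of the Moreau envelope: since $e_\lambda h\uparrow h$ pointwise as $\lambda\downarrow 0$ for the lower semicontinuous CPLQ function $h=\d^2 g(\Phi(\ox),\olm)$, one obtains $\psi_\rho\uparrow\psi_\infty$ pointwise as $\rho\to\infty$, with each $\psi_\rho$ continuous because the envelope of a CPLQ function is again CPLQ and here finite-valued by \cite[Proposition~12.30]{rw}. The implication $(b)\Rightarrow(a)$ is then immediate: if $\psi_{\bar\rho}>0$ on $K_\Th(\ox,-\ov)\setminus\{0\}$, monotonicity gives $\psi_\infty\ge\psi_{\bar\rho}>0$ there, which is the SOSC. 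For $(a)\Rightarrow(b)$ I would use homogeneity to reduce \eqref{eq26} to positivity on the compact set $S:=K_\Th(\ox,-\ov)\cap\S$ and argue by contradiction: were (b) to fail, there would be $\rho_k\to\infty$ and $w_k\in S$ with $\psi_{\rho_k}(w_k)\le 0$; passing to a subsequence with $w_k\to w^*\in S$ and invoking the liminf inequality furnished by the increasing convergence $\psi_\rho\uparrow\psi_\infty$ and the continuity of each $\psi_\rho$ (equivalently, the epi-convergence of the $\psi_\rho$ to $\psi_\infty$) would yield $\psi_\infty(w^*)\le\liminf_k\psi_{\rho_k}(w_k)\le 0$, contradicting $\psi_\infty(w^*)>0$.

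I expect $(a)\Rightarrow(b)$ to be the main obstacle. The difficulty is that $\psi_\infty$ jumps to $+\infty$ exactly on $K_\Th(\ox,-\ov)\setminus\D$, whereas for every finite $\rho$ the envelope in $\psi_\rho$ only mimics this blow-up approximately, and the approximation degrades near the boundary of $\D$ inside $K_\Th(\ox,-\ov)$. A naive splitting of $S$ into its trace on $\D$ and its complement founders precisely on that boundary, which is why the uniform conclusion must be drawn from the compactness of $S$ combined with the epi-convergence liminf inequality rather than from pointwise estimates. The supporting technical points, namely the monotone convergence $\psi_\rho\uparrow\psi_\infty$ and the continuity and homogeneity of the $\psi_\rho$, are routine once the envelope formula \eqref{eq402} is in hand.
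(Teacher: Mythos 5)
Your proposal is correct and follows essentially the same route as the paper: both hinge on the second-order epi-derivative formula \eqref{eq402}, the monotone convergence $e_{1/2\rho}\big(\d^2 g(\Phi(\ox),\olm)\big)\uparrow \d^2 g(\Phi(\ox),\olm)$ as $\rho\to\infty$, the observation that $\d^2 g(\Phi(\ox),\olm)\circ\nabla\Phi(\ox)$ is $+\infty$ off $\D$ so that the SOSC becomes positivity of $\psi_\infty$ on all of $K_\Th(\ox,-\ov)\setminus\{0\}$, and compactness of $K_\Th(\ox,-\ov)\cap\S$. The only (immaterial) difference is in extracting uniformity for (a)$\implies$(b): the paper cites the epi-convergence machinery of Rockafellar--Wets (epi-convergence of sums and convergence of infima) to get $\inf_{S}\psi_\rho\to\inf_S\psi_\infty$, whereas you run a direct contradiction argument via the monotone liminf inequality, which is a legitimate elementary rendering of the same step.
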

\begin{proof} The equivalence between (b) and (c) was already established in Proposition~\ref{2ndgr}.  Suppose that (b) is satisfied. Since we always have 
$$
e_{1/{2\rho}}\big(\d^2 g(\Phi(\ox),\olm)\big)(\nabla \Phi(\ox)w)\le \d^2 g(\Phi(\ox),\olm)(\nabla \Phi(\ox)w)\quad \mbox{for all}\;\; \rho>0, 
$$
the validity of the second-order condition \eqref{eq402} yields the SOSC \eqref{sosc} and so proves (a). 

Assume now that (a) holds. Since the second subderivative is lower semicontinuous and positive homogeneous of degree $2$ (cf. \cite[Proposition~13.5]{rw}), the SOSC \eqref{sosc} amounts to the existence of 
a positive constant $\ell$ so that we have 
\begin{equation}\label{som}
\langle\nabla_{xx}^2L(\bar x,\olm)w,w\rangle+\d^2g (\Phi(\bar x),\olm) (\nabla \Phi(\ox)w )\ge \ell \quad \mbox{for all }\; w\in \D\cap \S,
\end{equation}
where $\S$ stands for the unit sphere in $\R^n$. Let $w\in \S$ and denote the left-hand side of \eqref{som} by $h(w)$.
  Given $\rho>0$ and $w\in \R^n$, define the function $h_\rho:\R^n\to \R$ by 
$$
h_\rho(w)=\big\langle \nabla^2_{xx}L(\ox,\olm)w,w\big\rangle+ e_{1/{2\rho}}\big(\d^2 g(\Phi(\ox),\olm)\big)(\nabla \Phi(\ox)w).
$$
We understand from \cite[Theorem~1.25]{rw} that 
$$
h_\rho(w) \nearrow h(w)\;\; as \;\; \rho \nearrow \infty
$$
 for all $w\in\R^n$. Employing now \cite[Proposition~7.4]{rw} tells us that  $h_\rho\overset {\ss \mbox{e}}{\to} h$ as $\rho \nearrow \infty$.
 This, combined with \cite[Theorem~7.46]{rw}, implies that $h_\rho+\dd_{\S\cap K_\Th(\ox,-\ov)}\overset {\ss \mbox{e}}{\to} h+ \dd_{\S\cap K_\Th(\ox,-\ov)}$ as  $\rho \nearrow \infty$.
 Appealing now to \cite[Theorem~7.33]{rw} indicates that 
\begin{equation}\label{infco}
 \inf \big\{  h_\rho(w)|\; w\in \S\cap K_\Th(\ox,-\ov)\big\} \to \inf\big\{ h(w)|\; w\in \S\cap K_\Th(\ox,-\ov)\big\}\;\;\mbox{as}\;\;\rho \nearrow \infty.
\end{equation}
It follows from \eqref{pwfor} and \eqref{domcr} that  for any $w\in \R^n$ with $\nabla \Phi(\ox)w \notin K_g(\Phi(\ox),\olm)$, we have  $\d^2g (\Phi(\bar x),\olm) (\nabla \Phi(\ox)w )=\infty$.
This,  \eqref{som}, and the definition of the convex cone $\D$ from \eqref{coned}  bring us to 
$$
\inf\big\{ h(w)|\; w\in \S\cap K_\Th(\ox,-\ov)\big\}\ge \ell.
$$
Combining this and \eqref{infco}, we can find a positive constant $ \bar \rho$ for which we have 
\begin{equation}\label{rb1}
h_\rho(w)>\frac{\ell}{2}\quad \mbox{for all}\;\;\rho\ge  \bar \rho\;\;\mbox{and all}\;\;w\in    \S\cap K_\Th(\ox,-\ov).
\end{equation}
Given $\rho\ge \bar\rho$, it follows from \eqref{eq402} and the definition of $h_\rho$  that 
$$
\d^2_x\L\big((\ox,\olm,\rho),\ov\big)(w) >0\quad \mbox{for all} \;\; w\in \S\cap K_\Th(\ox,-\ov).
$$
Remember that the second-order epi-derivative  is positive homogeneous of degree $2$. So we arrive at the second-order condition \eqref{eq26} for any $\rho\ge \bar\rho $. 
This proves (b) and hence completes the proof.
\end{proof}

The above result falls into a similar category of variational properties of   augmented Lagrangians, established recently in \cite{mmstams} 
for  constrained optimization problems. Our  proof of Theorem~\ref{growth}, however,  uses an idea
that was exploited  recently in \cite[Theorem~5]{r2020} and departs significantly from  the one in  \cite{mmstams} for constrained optimization  problems. 
Note that  the implication (a)$\implies$(b) in Theorem~\ref{growth} was derived 
  in \cite[Theorem~7.4]{r93}   for nonlinear programming problems without appealing to the second subderivative.  
 A similar characterization of the growth condition for augmented Lagrangians of parabolically regular constrained optimization 
 problems was obtained recently in \cite[Theorem~8.4]{mmstams}.
 
 Our final goal in this section is to achieve a uniform version of the growth condition \eqref{eq16}, which can be expressed equivalently using Proposition~\ref{fopag}(a) as
  \begin{equation}\label{eq16v2}
\L(x,\olm,\rho)\ge \L(\ox,\olm,\rho)+\ell\| x-\bar x\|^2\quad \mbox{for all}\;\;\rho\ge  \bar \rho\;\;\mbox{and all}\;\;\;x\in\B_\gg (\ox)\cap \Th
\end{equation}
Note that the constants $\bar\rho$, $\ell$, and $\gg$ depend on the Lagrange multiplier $\olm$. 
To establish a uniform version of \eqref{eq16v2} when $\olm$ therein is replaced by $\lm \in \Lm(\ox)$ that are sufficiently close to $\olm$, we 
require to show that  the constants $\bar\rho$, $\ell$, and $\gg$ in \eqref{eq16v2}   can be chosen to be independent of $\lm$ whenever $\lm$ is in $\Lm(\ox)$ and sufficiently close to   $\olm$. This goal will be achieved in 
two steps in the rest of this section.

\begin{Lemma}[second-order epi-derivatives of augmented Lagrangians] \label{usog}
Let $(\ox,\olm)$ be a solution to the KKT system \eqref{vs} and set $\ov:= \nabla_x L(\ox,\olm)$. 
Then the following conditions are equivalent: 
 \begin{enumerate}[noitemsep,topsep=0pt]
 \item the  SOSC \eqref{sosc} holds at $(\ox,\olm)$;
 \item there exist positive constants $\bar\rho$, $\ell_1$, $\ve_1$ such that for all $\rho\ge\bar\rho$ and
all  $\lm\in\Lambda(\ox)\cap\B_{\varepsilon_1}(\olm)$ we have
\begin{equation}\label{eq44}
\d^2_x\L \big((\ox,\lm,\rho), \nabla_x L(\ox,\lm)\big)(w)\ge\ell_1\|w\|^2\;\mbox{ for all }\;w\in K_\Th\big(\ox, - \nabla_x L(\ox,\olm)\big).
\end{equation}
\end{enumerate}
\end{Lemma}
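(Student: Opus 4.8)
The equivalence splits into an easy and a hard direction. For \textbf{(b)$\Rightarrow$(a)} I would simply specialize the uniform estimate \eqref{eq44} to $\lm=\olm$, which belongs to $\Lm(\ox)\cap\B_{\ve_1}(\olm)$. Since $\nabla_x L(\ox,\olm)=\ov$, \eqref{eq44} then reads $\d^2_x\L((\ox,\olm,\rho),\ov)(w)\ge\ell_1\|w\|^2$ for $w\in K_\Th(\ox,-\ov)$, so in particular it is positive for every $w\in K_\Th(\ox,-\ov)\setminus\{0\}$. This is precisely condition (b) of Theorem~\ref{growth}, whence the SOSC \eqref{sosc} follows.

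The real work lies in \textbf{(a)$\Rightarrow$(b)}. Since every $\lm\in\Lm(\ox)$ makes $(\ox,\lm)$ a solution of the KKT system \eqref{vs}, the second-order epi-derivative formula \eqref{eq402} (valid at any KKT solution, here applied with $\lm$ in place of $\olm$) lets me write the left-hand side of \eqref{eq44} as
\[
\d^2_x\L\big((\ox,\lm,\rho),\nabla_x L(\ox,\lm)\big)(w)=\langle\nabla^2_{xx}L(\ox,\lm)w,w\rangle+e_{1/{2\rho}}\big(\d^2 g(\Phi(\ox),\lm)\big)\big(\nabla\Phi(\ox)w\big).
\]
Denote this by $h^\lm_\rho(w)$; then $h^\olm_\rho$ is exactly the function $h_\rho$ from the proof of Theorem~\ref{growth}. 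The plan is to bound $h^\lm_\rho$ below by $h^\olm_\rho$ up to an error of order $\|\lm-\olm\|\,\|w\|^2$, and then to feed in the uniform positivity of $h^\olm_\rho$ already recorded in \eqref{rb1}.

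The decisive ingredient is a monotonicity of the second subderivative of $g$ as $\lm$ approaches $\olm$ inside $\sub g(\Phi(\ox))$. Because $\sub g(\Phi(\ox))$ is a polyhedral convex set by \eqref{sub} and $K_g(\Phi(\ox),\lm)=N_{\sub g(\Phi(\ox))}(\lm)$ by Proposition~\ref{sop}(a), I would invoke the local polyhedral fact that $N_P(\lm)\subseteq N_P(\olm)$ for all $\lm\in P$ near $\olm$; this holds since $P$ coincides with $\olm+T_P(\olm)$ near $\olm$ and $N_{T_P(\olm)}(u)\subseteq T_P(\olm)^\circ=N_P(\olm)$ for every $u\in T_P(\olm)$. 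Hence $K_g(\Phi(\ox),\lm)\subseteq K_g(\Phi(\ox),\olm)$ for $\lm$ close to $\olm$. Combining this inclusion with the additive decomposition \eqref{cc2}, in which the term $\d^2 g(\Phi(\ox))$ is independent of the chosen subgradient, yields $\d^2 g(\Phi(\ox),\lm)(u)\ge\d^2 g(\Phi(\ox),\olm)(u)$ for all $u$; monotonicity of the Moreau envelope then transfers this to $e_{1/{2\rho}}(\d^2 g(\Phi(\ox),\lm))\ge e_{1/{2\rho}}(\d^2 g(\Phi(\ox),\olm))$. Since $\nabla^2_{xx}L(\ox,\cdot)$ is affine, there is $C>0$ with $\|\nabla^2_{xx}L(\ox,\lm)-\nabla^2_{xx}L(\ox,\olm)\|\le C\|\lm-\olm\|$, so altogether $h^\lm_\rho(w)\ge h^\olm_\rho(w)-C\|\lm-\olm\|\,\|w\|^2$.

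It then remains to assemble the estimate. From \eqref{rb1} there are $\bar\rho>0$ and $\ell>0$ with $h^\olm_\rho(w)>\ell/2$ for all $\rho\ge\bar\rho$ and all $w\in\S\cap K_\Th(\ox,-\ov)$. Taking $\ve_1>0$ small enough that $C\ve_1\le\ell/4$ (and below the radius of the polyhedral fact), I obtain $h^\lm_\rho(w)>\ell/4$ on $\S\cap K_\Th(\ox,-\ov)$ for every $\rho\ge\bar\rho$ and $\lm\in\Lm(\ox)\cap\B_{\ve_1}(\olm)$. Finally, both summands of $h^\lm_\rho$ are positively homogeneous of degree $2$ (the Moreau envelope of a degree-$2$ positively homogeneous function is again such), so rescaling an arbitrary $0\neq w\in K_\Th(\ox,-\ov)$ to the unit sphere upgrades the bound to $h^\lm_\rho(w)>\frac{\ell}{4}\|w\|^2$, which is \eqref{eq44} with $\ell_1=\ell/4$. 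The main obstacle is the monotonicity step: one must verify carefully that shrinking the subgradient $\lm\to\olm$ within $\sub g(\Phi(\ox))$ can only enlarge the second subderivative, and this is exactly where the polyhedrality of $\sub g(\Phi(\ox))$ and the subgradient-independence of the quadratic part in \eqref{cc2} are essential.
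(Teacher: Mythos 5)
Your proof is correct and follows essentially the same route as the paper's: (b)$\Rightarrow$(a) by specializing \eqref{eq44} to $\lm=\olm$ and invoking Theorem~\ref{growth}, and (a)$\Rightarrow$(b) via the formula \eqref{eq402} applied at nearby multipliers $\lm\in\Lm(\ox)$, the Hessian perturbation bound of order $\|\nabla^2\Phi(\ox)\|\,\|\lm-\olm\|\,\|w\|^2$, the polyhedrality-based inclusion $K_g(\Phi(\ox),\lm)\subseteq K_g(\Phi(\ox),\olm)$ transferred through the Moreau envelope, and degree-2 positive homogeneity. The only cosmetic differences are that you quote the uniform-in-$\rho$ bound \eqref{rb1} directly rather than working at $\rho=\bar\rho$ and finishing with monotonicity of $\rho\mapsto\d^2_x\L((\ox,\lm,\rho),\nabla_x L(\ox,\lm))(w)$, and that you spell out the local polyhedral reduction behind the normal-cone inclusion, which the paper merely asserts after shrinking $\ve_1$.
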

\begin{proof} The implication (b)$\implies$(a)   results directly from Theorem~\ref{growth}. To prove the opposite implication, assume that (a) holds. 
It follows from Theorem~\ref{growth} that there exists a constant $\bar\rho>0$ such that for all $\rho\ge\bar\rho$ the second-order condition \eqref{eq26} is satisfied. 
Recall from  \cite[Proposition~13.5]{rw} that  the second-order epi-derivative  is lower semicontinuous and positive homogenous of degree $2$. 
Thus the second-order condition  \eqref{eq26} is equivalent  to the existence of a constant $\ell_1>0$ such that the estimate 
\begin{equation}\label{eq41}
\d^2_x\L \big((\ox,\olm,\bar\rho),\ov \big)(w)=\big\langle \nabla^2_{xx}L(\ox,\olm)w, w\big\rangle+ e_{ {1}/{2\bar\rho}}\big(\d^2 g(\Phi(\ox),\olm)\big)(\nabla\Phi(\ox)w)\ge 2\ell_1
\end{equation}
holds for all $w\in \S\cap K_\Th(\ox, -\ov)$, where the equality in \eqref{eq41} comes from  \eqref{eq40}. We proceed now by proving the following claim:

{\bf Claim A.} {\em There exists an $\varepsilon_1>0$ such that for any $\lm\in\Lambda(\ox)\cap\B_{\varepsilon_1}(\olm)$ the estimate
\begin{equation}\label{clq2}
\d^2_x\L \big((\ox,\lm,\bar\rho), \nabla_x L(\ox,\lm) \big)(w)=\big\langle \nabla^2_{xx}L(\ox,\lm)w,w\big\rangle+ e_{ {1}/{2\bar\rho}}\big(\d^2 g(\Phi(\ox),\lm)\big)(\nabla\Phi(\ox)w)\ge \ell_1
\end{equation} 
 holds for all $w\in \S\cap K_\Th\big(\ox, -\nabla_x L(\ox,\lm)\big)$.}
 
  To furnish this claim, choose $\varepsilon_1\in (0, {\ell_1}/{\|\nabla^2\Phi(\ox)\|})$ if $\|\nabla^2\Phi(\ox)\|\neq 0$ and $\varepsilon_1 > 0$ otherwise
  and let $w\in \S$.
 Thus for all     $\lm\in \B_{\ve_1}(\olm)$, we obtain   
\begin{eqnarray}\label{clq3}
\big\langle \nabla^2_{xx}L(\ox,\lm)w, w\big\rangle &=& \big\langle \nabla^2_{xx}L(\ox,\olm)w, w\big\rangle+\big\langle\lm -\olm, \nabla^2  \Phi (\ox)(w, w)\big\rangle\nonumber\\
&\geq&\big\langle \nabla^2_{xx}L(\ox,\olm)w, w\big\rangle - \|\nabla^2\Phi(\ox)\|\cdot\|\lm - \olm\|\nonumber\\
&\geq&\big\langle \nabla^2_{xx}L(\ox,\olm)w, w\big\rangle - \ell_1.
\end{eqnarray}
Moreover, it follows from \eqref{cc2} that 
\begin{eqnarray}\label{clq4}
e_{ {1}/{2\bar\rho}}\big(\d^2 g(\Phi(\ox),\lm)\big)(\nabla\Phi(\ox)w)&=&\inf_{v\in \R^m}\big\{\d^2g\big(\Phi(\ox)\big)(v)+\delta_{K_g(\Phi(\ox), \lm)}(v)+\bar\rho\|v-\nabla\Phi(\ox)w\|^2\big\}\nonumber\\ 
&=& \inf_{v\in K_g(\Phi(\ox), \lm)}\big\{\d^2g \big(\Phi(\ox) \big)(v)+\bar\rho\|v-\nabla\Phi(\ox)w\|^2\big\}. 
\end{eqnarray}
Note that $\d^2g(\Phi(\ox))$ does not depend on $\lambda$. Shrinking $\ve_1$ if necessary and recalling that $\sub g\big(\Phi(\ox)\big)$ is a polyhedral convex set, we conclude from   Proposition~\ref{sop}(a) that  
the relationships 
\begin{equation*}
K_g(\Phi(\ox), \lm) = N_{\partial g(\Phi(\ox))}(\lm) \subset N_{\partial g(\Phi(\ox))}(\olm) = K_g(\Phi(\ox), \olm) 
\end{equation*} 
hold for all $\lm\in\Lambda(\ox)\cap\B_{\varepsilon_1}(\olm)$.  These, together with  \eqref{clq4}, tell us  that for any $\lm\in\Lambda(\ox)\cap\B_{\varepsilon_1}(\olm)$ we have 
\begin{equation}\label{clq5}
e_{ {1}/{2\bar\rho}}\big(\d^2 g(\Phi(\ox),\lm)\big)(\nabla\Phi(\ox)w) \geq e_{ {1}/{2\bar\rho}}\big(\d^2 g(\Phi(\ox),\olm)\big)(\nabla\Phi(\ox)w).
\end{equation} 
 Combining \eqref{eq41}, \eqref{clq3}, and \eqref{clq5} verifies   \eqref{clq2} for all $w\in \S\cap K_\Th(\ox, -\ov)$ and   all $\lm\in\Lambda(\ox)\cap\B_{\varepsilon_1}(\olm)$. 
 To justify \eqref{clq2} for all $w\in \S\cap  K_\Th \big(\ox, -\nabla_x L(\ox, \lm) \big)$, observe that by the  polyhedrality of $\Th$,   we can assume without loss of generality that 
 $$
 K_\Th \big(\ox, -\nabla_x L(\ox, \lm) \big)=N_{N_\Th(\ox)}\big(-\nabla_x L(\ox, \lm)\big) \subset N_{N_\Th(\ox)}\big(-\nabla_x L(\ox, \olm)\big) = K_\Th(\ox, -\ov)
 $$
  for all $\lm\in\Lambda(\ox)\cap\B_{\varepsilon_1}(\olm)$. This clearly justifies Claim A. 
  
Using the positive homogeneity of the second-order epi-derivative yields \eqref{eq44} for $\rho = \bar\rho$ and for all $\lm\in\Lambda(\ox)\cap\B_{\varepsilon_1}(\olm)$. Recall that the function $$\rho\mapsto e_{ {1}/{2\rho}}\big(\d^2 g(\Phi(\ox),\lm)\big)(\nabla\Phi(\ox)w)$$ is nondecreasing. So is the function $\rho \mapsto\d^2\L( (\ox, \lm, \rho), \nabla_x L(\ox, \lm))(w)$. This  
yields  \eqref{eq44}  for all $\lm\in\Lambda(\ox)\cap\B_{\varepsilon_1}(\olm)$ and all $\rho \geq \bar\rho$, and hence completes the   proof.
\end{proof} 

We are now ready to achieve the uniform quadratic growth condition for the augmented Lagrangian \eqref{aug} under the SOSC \eqref{sosc}.

\begin{Theorem}[uniform quadratic growth condition for augmented Lagrangians]\label{ugrowth}
Let $(\ox,\olm)$ be a solution to the KKT system \eqref{vs}. Then the following conditions are equivalent:
 \begin{enumerate}[noitemsep,topsep=0pt]
 \item    the  SOSC \eqref{sosc} holds at $(\ox,\olm)$;
 \item there are positive constants $\bar\rho, \gamma, \varepsilon,\ell$ such that for all $\lm\in\Lambda(\ox)\cap\B_\varepsilon(\olm)$ and all $\rho\ge\bar\rho$  the uniform quadratic growth condition
\begin{equation}\label{eq14}
\L(x,\lm,\rho)\ge \ph(\ox)+g(\Phi(\ox))+\ell\| x-\ox\|^2\;\mbox{ for all }\;x\in\B_\gamma(\ox)\cap \Th
\end{equation}
is satisfied.
\end{enumerate}
\end{Theorem}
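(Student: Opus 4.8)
The plan is to settle the implication (b)$\implies$(a) immediately and to concentrate the work on (a)$\implies$(b). For (b)$\implies$(a) I would take $\lm=\olm$, which lies in $\Lambda(\ox)\cap\B_\varepsilon(\olm)$; then \eqref{eq14} collapses to the growth condition \eqref{eq16} for the fixed multiplier $\olm$, which is exactly item (c) of Theorem~\ref{growth} and hence equivalent to the SOSC \eqref{sosc}.

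For the forward direction I would first reduce the penalty parameter. Every $\lm\in\Lambda(\ox)$ makes $(\ox,\lm)$ a solution of the KKT system \eqref{vs}, so Proposition~\ref{fopag}(a) gives $\L(\ox,\lm,\rho)=\ph(\ox)+g(\Phi(\ox))$ for all $\rho>0$; in particular the reference value on the right of \eqref{eq14} is independent of $\rho$. Combined with the monotonicity $\rho\mapsto\L(x,\lm,\rho)$ from Proposition~\ref{paug}(c), it therefore suffices to prove \eqref{eq14} at the single value $\rho=\bar\rho$ produced by Lemma~\ref{usog}, the case $\rho\ge\bar\rho$ then following from $\L(x,\lm,\rho)\ge\L(x,\lm,\bar\rho)$. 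Lemma~\ref{usog} furnishes $\bar\rho,\ell_1,\varepsilon_1>0$ such that the uniform bound \eqref{eq44} holds at $\rho=\bar\rho$ on the whole cone $K_\Th(\ox,-\ov)$ for every $\lm\in\Lambda(\ox)\cap\B_{\varepsilon_1}(\olm)$.

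Next I would upgrade this pointwise second-order positivity to uniform quadratic growth by contradiction. Fixing $\ell:=\ell_1/4$ and $\bar\rho$, suppose no radii $\gamma,\varepsilon$ work; choosing $\gamma=\varepsilon=1/k$ yields $\lm^k\in\Lambda(\ox)$ with $\lm^k\to\olm$ and $x^k\in\Th$ with $t_k:=\|x^k-\ox\|\downarrow0$ violating the growth, and by monotonicity in $\rho$ the violation may be taken at $\rho=\bar\rho$. Setting $w^k:=(x^k-\ox)/t_k$ and passing to a subsequence with $w^k\to w$, $\|w\|=1$, polyhedrality of $\Th$ gives $w\in T_\Th(\ox)$, while stationarity of $(\ox,\lm^k)$ yields $\langle\nabla_xL(\ox,\lm^k),w^k\rangle\ge0$ and hence $\langle\ov,w\rangle\ge0$ in the limit. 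If $\langle\ov,w\rangle>0$, integrating the gradient of the $\mathcal C^1$ map $\L(\cdot,\lm^k,\bar\rho)$ along the segment and using joint continuity of $\nabla_x\L$ in $(x,\lm)$ forces $\L(x^k,\lm^k,\bar\rho)-\L(\ox,\lm^k,\bar\rho)\ge\tfrac12 t_k\langle\ov,w\rangle$ for large $k$, contradicting the violation once $t_k$ is small. Otherwise $\langle\ov,w\rangle=0$, so $w\in T_\Th(\ox)\cap[\ov]^\perp=K_\Th(\ox,-\ov)$, and subtracting the nonnegative first-order term recasts the violation as $\Delta^2_{t_k}\L\big((\ox,\lm^k,\bar\rho),\nabla_xL(\ox,\lm^k)\big)(w^k)<\ell_1/2$.

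The crux, and the step I expect to be the main obstacle, is to contradict this last inequality despite the multiplier $\lm^k$ moving together with $t_k$ and $w^k$, since \eqref{eq44} controls the second subderivative of each \emph{fixed} function $\L(\cdot,\lm^k,\bar\rho)$ rather than a difference quotient along a diagonal sequence. I would resolve it in two parts. First, a uniform second-order expansion: exploiting that the Moreau envelope $e_{1/\bar\rho}g$ is piecewise quadratic on polyhedral pieces \emph{not} depending on $\lm$, the remainder in $\L(\ox+tw',\lm,\bar\rho)=\L(\ox,\lm,\bar\rho)+t\langle\nabla_xL(\ox,\lm),w'\rangle+\tfrac12 t^2\,\d^2_x\L((\ox,\lm,\bar\rho),\nabla_xL(\ox,\lm))(w')+o(t^2)$ is governed only by the $\mathcal C^2$-Taylor remainders of $\ph$ and $\Phi$; the genuinely delicate point is to show this remainder is $o(t^2)$ \emph{uniformly} in the shift $\bar\rho^{-1}\lm$ over $\Lambda(\ox)\cap\B_{\varepsilon_1}(\olm)$, which by degree-two homogeneity of the second-order epi-derivative gives $\Delta^2_{t_k}\L(\ldots)(w^k)=\d^2_x\L((\ox,\lm^k,\bar\rho),\nabla_xL(\ox,\lm^k))(w^k)+o(1)$. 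Second, a lower-semicontinuity estimate for these epi-derivatives: using the explicit formula \eqref{eq402}, continuity of $\lm\mapsto\nabla^2_{xx}L(\ox,\lm)$, the $\lm$-independence of $\d^2 g(\Phi(\ox))$, the inclusion $K_g(\Phi(\ox),\lm^k)=N_{\sub g(\Phi(\ox))}(\lm^k)\subset N_{\sub g(\Phi(\ox))}(\olm)=K_g(\Phi(\ox),\olm)$ (as in the proof of Lemma~\ref{usog}), and continuity of the finite CPLQ function $e_{1/2\bar\rho}(\d^2 g(\Phi(\ox),\olm))$, I obtain $\liminf_k\d^2_x\L((\ox,\lm^k,\bar\rho),\nabla_xL(\ox,\lm^k))(w^k)\ge\d^2_x\L((\ox,\olm,\bar\rho),\ov)(w)\ge\ell_1$, where the last bound is \eqref{eq44} applied to $w\in K_\Th(\ox,-\ov)$ with $\|w\|=1$. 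Together these yield $\liminf_k\Delta^2_{t_k}\L(\ldots)(w^k)\ge\ell_1>\ell_1/2$, the desired contradiction, completing (a)$\implies$(b).
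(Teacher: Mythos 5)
Your reduction steps are sound and, up to the crux, your scheme is reasonable: the implication (b)$\implies$(a) via $\lm=\olm$ and Theorem~\ref{growth}, the reduction to the single value $\rho=\bar\rho$ via Propositions~\ref{fopag}(a) and \ref{paug}(c), the appeal to Lemma~\ref{usog}, and the first-order dichotomy $\la\ov,w\ra>0$ versus $\la\ov,w\ra=0$ are all correct; so is your second step (the inclusion $K_g(\Phi(\ox),\lm^k)\subset K_g(\Phi(\ox),\olm)$ and the resulting lower semicontinuity of the epi-derivatives), which mirrors Claim~A in the paper's proof of Lemma~\ref{usog}. The gap is exactly at the step you flagged as delicate, and it is fatal as stated: the ``uniform second-order expansion'' whose remainder is ``governed only by the $\mathcal{C}^2$-Taylor remainders of $\ph$ and $\Phi$'' is false. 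Take $n=1$, $m=2$, $\Th=\R$, $\ph(x)=\frac12x^2-x$, $\Phi(x)=(x,x)$, $g=\dd_{\R^2_-}$, $\ox=0$, $\bar\rho=1$. Then $\Lambda(0)=\{\lm\in\R^2_+\,|\,\lm_1+\lm_2=1\}$, the SOSC \eqref{sosc} holds (vacuously, since the cone \eqref{coned} is $\{0\}$) at $\olm=(1,0)$, and for $\lm\in\Lambda(0)$ one has
\begin{equation*}
\L(x,\lm,1)=\tfrac12x^2-x+\tfrac12\max\{x+\lm_1,0\}^2+\tfrac12\max\{x+\lm_2,0\}^2-\tfrac12\|\lm\|^2 .
\end{equation*}
Choose $\lm^k=(1-t_k^2,\,t_k^2)\in\Lambda(0)$ and $w^k\equiv w=-1\in K_\Th(\ox,-\ov)$, so that $\nabla_xL(0,\lm^k)=0$. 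A direct computation gives $\L(-t,\lm^k,1)=\frac32t^2$ for $0<t<t_k^2$, while $\L(-t,\lm^k,1)=t^2+t_k^2\,t-\frac12t_k^4$ for $t_k^2<t<1-t_k^2$. Consequently
\begin{equation*}
\Delta^2_{t_k}\L\big((0,\lm^k,1),0\big)(-1)=2+2t_k-t_k^2\longrightarrow 2,
\qquad
\d^2_x\L\big((0,\lm^k,1),0\big)(-1)=3\ \mbox{ for every } k,
\end{equation*}
even though $\ph$ is quadratic and $\Phi$ is linear, so the smooth Taylor remainders vanish identically. The non-vanishing discrepancy is produced entirely by the kink of the envelope $e_1g$: the image of the segment $[\ox,\ox+t_kw]$ crosses a piece boundary at parameter $s=t_k^2\ll t_k$, so the difference quotient averages curvature from a piece different from the one recorded by the second subderivative at the shifted base point $\Phi(\ox)+\lm^k$. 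Thus your step~1 fails, and with it the whole chain: the quantity you can bound from below by \eqref{eq44} (the epi-derivatives at the moving multipliers, here $\equiv 3$) does not bound the quantity you need (the diagonal difference quotients, here $\to 2$).

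Note that in this example the diagonal quotient converges to $\d^2_x\L((0,\olm,1),0)(-1)=2$, the epi-derivative at the \emph{limit} multiplier; a repaired sequential argument would therefore have to prove a diagonal bound $\liminf_k\Delta^2_{t_k}\L((\ox,\lm^k,\bar\rho),\nabla_xL(\ox,\lm^k))(w^k)\ge \ell_1$ directly, which requires controlling the curvature of the PLQ envelope on \emph{all} polyhedral pieces crossed by the (shifted) segments, not only at the points $\Phi(\ox)+\bar\rho^{-1}\lm^k$ where \eqref{eq44} gives information. That is essentially the entire difficulty of the theorem, not a removable technicality, and nothing in your proposal supplies it. The paper circumvents sequences altogether by a convexity argument: since $\lm\mapsto\L(x,\lm,\bar\rho)$ is concave by Proposition~\ref{paug}(b), the function
\begin{equation*}
\psi(\lm):=\sup_{x\in(\B_{\gamma_{\olm}}(\ox)\cap\Th)\setminus\{\ox\}}
\frac{\ph(\ox)+g(\Phi(\ox))-\L(x,\lm,\bar\rho)}{\|x-\ox\|^2}
+\dd_{\Lambda(\ox)\cap\tilde\B_{\ve_1}(\olm)}(\lm)
\end{equation*}
is convex and lower semicontinuous with compact polyhedral domain; its finiteness on that domain (obtained from the $\lm$-dependent growth radii produced by Lemma~\ref{usog}), together with continuity of convex functions relative to polyhedral domains (\cite[Theorem~2.35]{rw}), propagates $\psi(\olm)\le-\ell_1/2$ to $\psi(\lm)\le-\ell_1/4$ for all $\lm\in\Lambda(\ox)\cap\B_\ve(\olm)$, which is precisely the uniform growth \eqref{eq14}. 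As written, your proof of (a)$\implies$(b) is incomplete.
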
 

\begin{proof} The implication (b)$\implies$(a)   results directly from Theorem~\ref{growth}. To prove the opposite implication, assume that (a) holds. 
It follows from Lemma~\ref{usog} that there exist the positive constants $\ell_1$, $\varepsilon_1$, and $\bar\rho$  for which \eqref{eq44} is satisfied  for all 
 $\lm\in\Lambda(\ox)\cap\B_{\varepsilon_1}(\olm)$ and all $\rho\ge\bar\rho$. Using this and  Proposition~\ref{fopag}(a), 
 we deduce from \eqref{eq44} that for any $\lm\in\Lambda(\ox)\cap\B_{\varepsilon_1}(\olm)$ there exists $\gamma_{\lm}>0$ for which we have 
\begin{equation}\label{eq22}
\L(x,\lm,\bar\rho)\ge \ph(\ox)+g(\Phi(\ox))+\dfrac{\ell_1}{2}\Vert x-\ox\Vert^2\;\textrm{ for all }\;x\in\B_{\gamma_\lm}(\ox)\cap \Th,
\end{equation}
where the constant $ {\ell_1}$ can be chosen the same for all the multipliers $\lm\in\Lambda(\ox)\cap\B_{\varepsilon_1}(\olm)$. The
latter results directly from the definition of the second subderivative. The radii of the balls centered at $\ox$ in \eqref{eq22}, however,  depend on $\lm$. We argue below that  a common radius can be chosen for all the multipliers $\lm\in\Lambda(\ox)$ that are sufficiently close to $\olm$. To this end, assume that 
$$
\tilde\B_{\varepsilon_1}(\olm)=\big\{\lm\in \R^m|\; \|\lm-\olm\|_1\le \ve_1\big\}
$$
where $\|\lm-\olm\|_1$ stands for $L_1$-norm of the vector $\lm-\olm$, namely the sum of the absolute values of the components of $\lm-\olm$. Clearly, we have $\tilde\B_{\varepsilon_1}(\olm)\subset \B_{\varepsilon_1}(\olm)$.  Define the function $\psi\colon\R^m\to\oR$ by
\begin{equation}\label{eq23}
\psi(\lm):=\sup_{x\in\big(\B_{\gamma_{\olm}}(\ox)\cap \Th\big)\setminus \{\ox\}}\frac{\ph(\ox)+g(\Phi(\ox))-\L(x,\lambda,\bar\rho)}{\Vert x-\ox\Vert^2}
+\delta_{\Lambda(\ox)\cap \tilde\B_{\varepsilon_1}(\olm)} (\lambda),\quad\lm\in\R^m.
\end{equation}
By Proposition~\ref{paug}(b), the function $\lambda\mapsto\L(x,\lm,\bar\rho)$ is concave. This, combined with the convexity of the set $\Lambda(\ox)\cap \tilde\B_{\varepsilon_1}(\olm)$, 
 guarantees  that $\psi$ in \eqref{eq23} is a convex function. Moreover, $\psi$ is lower semicontinuous since  the first term on the right-hand side of \eqref{eq23} is the pointwise supremum of a collection of continuous functions.
 Now we claim  that for any $\lm\in\Lambda(\ox)\cap \tilde\B_{\varepsilon_1}(\olm)$ the value $\psi(\lm)$ is finite. To justify this claim, let $\lm\in\Lambda(\ox)\cap \tilde\B_{\varepsilon_1}(\olm)$. If  $\gamma_\lm\ge\gamma_{\olm}$,  we conclude from    \eqref{eq22} that 
\begin{equation*}
\psi(\lm)\le\sup_{x\in \big(\B_{\gamma_{\lm}}(\ox)\cap \Th\big)\setminus \{\ox\}}\frac{\ph(\ox)+g(\Phi(\ox))-\L(x,\lm,\bar\rho)}{\Vert x-\ox\Vert^2}\le-\dfrac{\ell_1}{2},
\end{equation*}
which, in particular,   implies that $\psi(\olm)\le- {\ell_1}/{2}$. On the other hand, if $\gamma_\lm<\gamma_{\olm}$, we get  
\begin{eqnarray*}
 &&\psi(\lm) \le\\
&&  \max\left\{\sup_{x\in \big(\B_{\gamma_{\lm}}(\ox)\cap \Th\big)\setminus \{\ox\}}\frac{\ph(\ox)+g(\Phi(\ox))-\L(x,\lm,\bar\rho)}{\Vert x-\ox\Vert^2},\max_{\substack{\gamma_\lm\le\Vert x-\ox\Vert\le\gamma_{\olm}\\ x\in \Th}}\dfrac{\ph(\ox)+g(\Phi(\ox))-\L(x,\lm,\bar\rho)}{\Vert x-\ox\Vert^2}\right\}.
\end{eqnarray*}
By \eqref{eq22}, the first term inside the maximum is bounded above by  $- {\ell_1}/{2}$.  Also, the second term is finite since it is the maximum of a continuous function over a compact set. 
Combining these tells us that $\psi(\lm)$ is finite for all $\lm\in\Lambda(\ox)\cap \tilde\B_{\varepsilon_1}(\olm)$, which  results in 
$$
\dom\psi=\Lambda(\ox)\cap \tilde\B_{\varepsilon_1}(\olm).
$$
Since both sets  $\Lambda(\ox)$ and $\tilde\B_{\varepsilon_1}(\olm)$ are polyhedral, $\dom\psi$ is   a compact polyhedral convex set.
Appealing now to   \cite[Theorem~2.35]{rw} ensures that  $\psi$ is continuous relative to its domain.  
Thus, we can find  $\varepsilon >0$ such that $\B_\varepsilon(\olm)\subset \tilde\B_{\varepsilon_1}(\olm)$   and that
\begin{equation*}
\psi(\lm)\leq \psi(\olm)+\frac{\ell_1}{4}\leq -\frac{\ell_1}{4}  
\end{equation*} 
for all $\lm \in \dom\psi\cap \B_\varepsilon(\olm)=\Lambda(\ox)\cap \B_\varepsilon(\olm)$. This, combined with the definition of $\psi$ in \eqref{eq23}, tells us that for 
all $x\in\B_{\gamma_{\olm}}(\ox)\cap \Th$ and all $\lm\in\Lambda(\ox)\cap\B_\varepsilon(\olm)$ we have 
\begin{equation}\label{eq25}
\L(x,\lm,\bar\rho)\ge \ph(\ox)+g(\Phi(\ox))+\frac{\ell_1}{4}\Vert x-\bar x\Vert^2.
\end{equation}
Remember that by  Proposition~\ref{paug}(c) we always have 
\begin{equation*}
\L(x,\lm,\rho)\ge\L(x,\lm,\bar\rho)\;\mbox{ for all}\;\;\rho\ge\bar\rho.
\end{equation*}
Combining this with \eqref{eq25} and setting $\ell:= {\ell_1}/{4}$ verify the uniform quadratic growth condition \eqref{eq14} and hence complete the proof.
\end{proof} 

The uniform quadratic growth condition, established in Theorem~\ref{ugrowth}, plays an indispensable role in the local convergence analysis of the ALM
in the next section. A similar result for nonlinear programming problems was obtained in \cite[Proposition~3.1]{fs12} without appealing to  the concept 
of the second-order epi-derivative. 

We close this section by mentioning that it was recently observed by Rockafellar in \cite[Theorems~2 and 4]{r2020}  that 
when $g$ in \eqref{comp} is convex piecewise linear and $\Th=\R^n$, a stronger version of the uniform version of the quadratic growth condition \eqref{eq14}
for the augmented Lagrangian \eqref{aug} amounts to the {\em strong} version of the SOSC  \eqref{sosc}.  
The strong SOSC was introduced first by Robinson in \cite{rob} for nonlinear programming problems in order to justify the 
strong regularity of the KKT systems for this class of problems.
The uniform version of the quadratic growth condition in \cite{r2020} for the augmented Lagrangian is slightly different 
from the one, established in Theorem~\ref{ugrowth}. Indeed, the former does not require that $\lm$ in Theorem~\ref{ugrowth}
belong to the Lagrange multiplier set $\Lm(\ox)$. While this version of the quadratic growth condition is stronger than \eqref{eq14},
it demands strictly stronger version of the SOSC \eqref{sosc}. Moreover, we show in the next section that the local convergence 
analysis of the ALM for \eqref{comp} can be accomplished under a weaker version of the uniform quadratic growth condition, namely the condition \eqref{eq14},
and so one does not need to assume the strong version of the SOSC \eqref{sosc} for this task. 

\section{Local Convergence Analysis of ALMs}\label{sect05}\sce

This section concerns the local convergence analysis of the ALM for the composite optimization problem \eqref{comp}.
Our local convergence analysis relies heavily on the   error bound estimate \eqref{subr3} as well 
as the uniform quadratic growth condition \eqref{eq14}, which are satisfied for \eqref{comp} under  the  SOSC \eqref{sosc}.

Recall that for  the current triple $(\xk, \lk,\rok)\in \R^n\times \R^m\times (0,\infty)$, the next primal iterate $\xkk$ in the ALM is computed as an optimal solution to 
  the subproblem
\begin{equation}\label{conv1}
\mini \L(x, \lk, \rok)\quad \mbox{subject to}\quad x\in \Th.
\end{equation} 
The next dual iterate $\lkk$ is  then calculated as the Lagrange multiplier associated with the optimal solution $\xkk$ to the subproblem \eqref{conv1}, namely  
\begin{equation}\label{lkk}
 \lkk := \nabla\menvk\big(\Phi(\xkk)+ \rho_k^{-1}{\lk}\big).
\end{equation}
 Since an optimal solution  $\xkk$ to the subproblem \eqref{conv1} always satisfies the {\em stationary}  condition
\begin{equation}\label{conv2}
0\in \nabla_x\L(\xkk, \lk, \rok) +N_\Th(\xkk),
\end{equation} 
and since it is often difficult to find an exact solution to \eqref{conv1} in practice, it is more convenient to look for a solution $\xkk$ satisfying the {\em approximate} stationary condition
\begin{equation}\label{xkk}
\dist\big(-\nabla_x\L(\xkk, \lk, \rok),N_\Th(\xkk)\big)\leq \ek,
\end{equation} 
where $\ek\geq 0$ is a   tolerance parameter.
\begin{Algorithm}[augmented Lagrangian method]\label{Alg1}
Choose $(x^0,\lambda^0)\in\Th\times \R^m$ and $\bar\rho>0$. Pick a sequence of tolerance parameters $\{\ek\c$ with $\ek>0$ for all $k$ and $\ek\to   0$ as $k\to\infty$ and a sequence of $\{\rok\c$ such that  $\rok\ge \bar\rho$ for all $k$ and set $k:=0$. Then
\begin{itemize}[noitemsep,topsep=0pt]
\item [{\rm (1)}] if $(\xk,\lk)$ satisfies a suitable termination criterion, stop;
\item[{\rm (2)}]  otherwise, choose  $\xkk$ such that  \eqref{xkk} holds and then update the Lagrange multiplier  $\lkk$ by \eqref{lkk};
\item[{\rm (3)}] set $k\leftarrow k+1$ and go to Step~1.
\end{itemize}
\end{Algorithm} 

We begin our local convergence analysis of the ALM by showing that the solution mapping to the subproblem \eqref{conv1} 
is nonempty-valued and isolated calm uniformly in  $\rho$. Recall that a set-valued mapping $F:\R^n\tto\R^m$  is called isolated calm  at $(\ox,\oy)\in \gph F$ if there are
 a constant $\tau\in \R_+$ and a neighborhood $U$ of $\ox$ such that the inclusion  
 $$
 F(x)\cap V\subset \{\oy\}+\tau\|x-\ox\|\B\;\;\mbox{for all}\; x\in U
 $$
holds. 

\begin{Proposition}[solvability of subproblems]\label{solv} 
Let $(\ox,\olm)$ be a solution to the KKT system \eqref{vs} and let  the  SOSC \eqref{sosc} hold at $(\ox,\olm)$.
Then there exist positive constants      $\tau$ and $\hat\gamma\le \gg$  with $\gg$ taken from Theorem~{\rm \ref{ugrowth}}  such that for any $\rho\ge \bar\rho $ with $\bar\rho$ taken from Theorem~{\rm\ref{ugrowth}},
 the optimal solution mapping $S_\rho:\R^m\to \R^n$, defined by 
\begin{equation}\label{eq177}
S_\rho(\lambda):=\argmin\big\{\L(x,\lambda,\rho)\;\big|\; x\in\Th\cap\B_{\hat\gamma}(\ox)\big\}, \;\;\lm\in \R^m,
\end{equation}
enjoys the uniform  isolated calmness property 
\begin{equation}\label{pt9}
   S_\rho(\lambda) \subset\{\ox\} +\tau \Vert\lambda-\olm\Vert\B
\end{equation}
 and satisfies the condition $\emp \neq S_\rho(\lambda)\subset  \inte  \B_{\hat\gamma}(\ox)$ for all $\lm\in \B_{\hat\gamma/2\tau}(\olm)$.  
 \end{Proposition}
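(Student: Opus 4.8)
The plan is to derive both conclusions—the uniform isolated calmness \eqref{pt9} and the nonemptiness/interior property—from the uniform quadratic growth condition \eqref{eq14} of Theorem~\ref{ugrowth}, invoked \emph{only} at the reference multiplier $\olm$, combined with a Lipschitz estimate for the $\lm$-dependence of the augmented Lagrangian. First I would fix $\hat\gamma\in(0,\gg]$ with $\gg$ as in Theorem~\ref{ugrowth} and record that $\Th\cap\B_{\hat\gamma}(\ox)$ is nonempty (it contains $\ox$) and compact, while $x\mapsto\L(x,\lm,\rho)$ is continuous by Proposition~\ref{paug}(a); the Weierstrass theorem then gives $S_\rho(\lm)\ne\emp$ for every $\lm$ and every $\rho\ge\bar\rho$, which already settles solvability. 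Let $L>0$ be a Lipschitz modulus of $\Phi$ on $\B_{\hat\gamma}(\ox)$, available since $\Phi$ is $\mathcal{C}^2$.

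For the calmness estimate, fix $\rho\ge\bar\rho$, $\lm\in\R^m$, and $x\in S_\rho(\lm)$, noting $x\in\B_{\hat\gamma}(\ox)\subset\B_\gg(\ox)$. Applying \eqref{eq14} at the admissible multiplier $\olm$ and using $\L(\ox,\olm,\rho)=\ph(\ox)+g(\Phi(\ox))$ from Proposition~\ref{fopag}(a) yields $\ell\|x-\ox\|^2\le\L(x,\olm,\rho)-\L(\ox,\olm,\rho)$. Adding and subtracting $\L(x,\lm,\rho)$ and $\L(\ox,\lm,\rho)$ and discarding the middle difference $\L(x,\lm,\rho)-\L(\ox,\lm,\rho)\le 0$, which is nonpositive because $\ox$ competes in \eqref{eq177}, I arrive at the central bound
\[
\ell\|x-\ox\|^2\le\Delta(x)-\Delta(\ox),\qquad \Delta(z):=\L(z,\olm,\rho)-\L(z,\lm,\rho).
\]

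The crux is to bound the right-hand side \emph{uniformly} in $\rho$. Writing $\Delta(z)=\psi(\Phi(z))$ plus a term independent of $z$, where $\psi(a):=e_{1/\rho}g(a+\rho^{-1}\olm)-e_{1/\rho}g(a+\rho^{-1}\lm)$, the $z$-independent part cancels, so $\Delta(x)-\Delta(\ox)=\psi(\Phi(x))-\psi(\Phi(\ox))$. Since $\nabla(e_{1/\rho}g)$ is globally Lipschitz with modulus $\rho$ (see \eqref{prm}), the gradient $\nabla\psi(a)=\nabla(e_{1/\rho}g)(a+\rho^{-1}\olm)-\nabla(e_{1/\rho}g)(a+\rho^{-1}\lm)$ is bounded in norm by $\rho\cdot\rho^{-1}\|\olm-\lm\|=\|\olm-\lm\|$; the penalty $\rho$ cancels here, which is exactly what makes the final constant $\rho$-free. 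Hence $\psi$ is $\|\olm-\lm\|$-Lipschitz, and with the Lipschitz bound on $\Phi$,
\[
\ell\|x-\ox\|^2\le\|\olm-\lm\|\,\|\Phi(x)-\Phi(\ox)\|\le L\,\|x-\ox\|\,\|\lm-\olm\|.
\]
Dividing by $\|x-\ox\|$ when $x\ne\ox$ (the case $x=\ox$ being trivial) gives $\|x-\ox\|\le(L/\ell)\|\lm-\olm\|$, so \eqref{pt9} holds with $\tau:=L/\ell$ for all $\lm\in\R^m$ and all $\rho\ge\bar\rho$.

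Finally, the interior/nonemptiness claim is immediate: for $\lm\in\B_{\hat\gamma/2\tau}(\olm)$ the calmness bound forces $\|x-\ox\|\le\tau\cdot\hat\gamma/(2\tau)=\hat\gamma/2<\hat\gamma$ for every $x\in S_\rho(\lm)$, whence $\emp\ne S_\rho(\lm)\subset\inte\B_{\hat\gamma}(\ox)$. I expect the main obstacle to be precisely the uniformity in $\rho$: the growth constant $\ell$ is already $\rho$-independent by Theorem~\ref{ugrowth}, but one must ensure that the perturbation term $\Delta(x)-\Delta(\ox)$ does not deteriorate as $\rho\to\infty$, and this rests entirely on the cancellation between the $\rho$-Lipschitz modulus of $\nabla(e_{1/\rho}g)$ and the factor $\rho^{-1}$ in the multiplier shift $\rho^{-1}(\olm-\lm)$.
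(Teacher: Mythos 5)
Your proof is correct, and it takes a genuinely different route to the key estimate than the paper does. Both arguments begin the same way: Weierstrass gives $S_\rho(\lm)\neq\emp$, and the uniform quadratic growth condition \eqref{eq14} at $\olm$ together with the minimality of $x\in S_\rho(\lm)$ (so that $\L(x,\lm,\rho)\le\L(\ox,\lm,\rho)$) reduces everything to controlling how $\L$ changes when $\olm$ is replaced by $\lm$. The paper controls this perturbation via the concavity of $\lm\mapsto\L(x,\lm,\rho)$ (Proposition~\ref{paug}(b)), the gradient formula \eqref{ueq19}, and the nonexpansiveness of the proximal mapping applied to the pair $\Phi(u)+\rho^{-1}\lm$, $\Phi(\ox)+\rho^{-1}\olm$; this produces a term $2\kappa_\Phi\|u-\ox\|+\rho^{-1}\|\olm-\lm\|$ and hence a \emph{quadratic} inequality in $\|u-\ox\|$, whose positive root yields $\tau=\kappa_\Phi/\ell+\sqrt{\kappa_\Phi^2/\ell^2+1/(\ell\bar\rho)}$. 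You instead keep the two perturbation terms together as $\Delta(x)-\Delta(\ox)=\psi(\Phi(x))-\psi(\Phi(\ox))$ and exploit the $\rho$-Lipschitz continuity of $\nabla(e_{1/\rho}g)$ against the $\rho^{-1}$ shift in the multiplier, so the penalty parameter cancels exactly and you get a \emph{linear} inequality $\ell\|x-\ox\|^2\le L\|x-\ox\|\,\|\lm-\olm\|$. This buys you three things: a cleaner and slightly sharper constant $\tau=L/\ell$ (versus the paper's asymptotic $2\kappa_\Phi/\ell$), a constant that is manifestly independent of $\bar\rho$, and the validity of \eqref{pt9} for \emph{all} $\lm\in\R^m$ rather than only for $\lm\in\B_{\hat\gamma/2\tau}(\olm)$. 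What the paper's route buys, by contrast, is the intermediate inequality \eqref{eq20} (and its variant \eqref{ueq20} in the subsequent Remark), which is not merely a stepping stone here but is reused later in the proof of Theorem~\ref{est}; your argument does not produce that inequality, so if one adopted your proof wholesale, \eqref{ueq20} would still need to be derived separately for the convergence analysis.
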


\begin{proof} Since the SOSC \eqref{sosc} is satisfied at $(\ox,\olm)$, it follows from Theorem~\ref{ugrowth}
that there exist positive constants $\bar\rho$, $\gg$, and $\ell$ such that for any $\rho\ge\bar \rho$, the uniform quadratic growth condition \eqref{eq14} holds for $\lm=\olm$. Moreover, 
by the twice differentiability of  $\Phi$  at $\ox$, we find some  constants $\hat\gamma\in(0,\gamma]$ and $\kappa_\Phi>0$ for which we have 
\begin{equation}\label{eq49}
\Vert\Phi(x)-\Phi(\ox)\Vert\le\kappa_\Phi\Vert x-\ox\Vert\;\textrm{ for all }\;x\in\B_{\hat\gamma}(\ox).
\end{equation}
Clearly,  the uniform quadratic growth condition \eqref{eq14} for $\lm=\olm$ indicates  that  $S_\rho(\olm)\cap \B_{\gamma}(\ox)=\{\ox\}$ for all $\rho\ge\bar\rho$. 
Define   the positive constant
\begin{equation}\label{eq21}
\tau:=\frac{\kappa_\Phi}{\ell}+\sqrt{\frac{\kappa_\Phi^2}{\ell^2}+\frac{1}{\ell\bar\rho}},
\end{equation}
and fix   $\lambda\in \B_{\hat\gamma/2\tau} (\olm)$ and $\rho\geq\bar\rho$.
Observe from the classical Weierstrass theorem that $S_\rho(\lm)\neq\emp$.   Assume that  $u\in S_\rho(\lambda)$.
By  Proposition~\ref{paug}(b),   the function $\lambda\mapsto\L(u,\lambda,\rho)$ is concave. 
By \cite[Theorem~2.26(b)]{rw}, we have 
\begin{equation}\label{ueq19}
\nabla_\lm\L(u, \lm, \rho)=\rho^{-1}\big(\nabla\menv\big(\Phi(u)+\rho^{-1}{\lm}\big) - \lm\big)
= \Phi(u)-\prox \big(\Phi(u)+ \rho^{-1}\lm\big),
\end{equation} 
 which in turn yields  the relationships
\begin{eqnarray*} 
\L(u,\lambda,\rho)&\ge&\L(u,\olm,\rho)-\langle\nabla_\lambda\L(u,\lambda,\rho),\olm-\lambda\rangle\nonumber\\
&=&\L(u,\olm,\rho)-\big\langle \Phi(u)-\prox\big(\Phi(u)+ \rho^{-1}{\lm}\big),\olm-\lambda\big\rangle\nonumber\\
&\ge& \ph(\ox)+g(\Phi(\ox))+\ell\Vert u-\ox\Vert^2-\big\langle \Phi(u)-\prox\big(\Phi(u)+ \rho^{-1}{\lm} \big),\olm-\lambda\big\rangle
\end{eqnarray*}
with the last inequality resulting from  \eqref{eq16}. We conclude from  $u\in S_\rho(\lambda)$ that 
\begin{equation*}
\L(u,\lambda,\rho)\le\L(\ox,\lambda,\rho)=\ph(\ox)+\menv\big(\Phi(\ox)+\rho^{-1}{\lm}\big) - \sm \rho^{-1}{\|\lm\|^2}\leq \ph(\ox)+g \big(\Phi(\ox) \big).
\end{equation*}
Combining these brings us to 
\begin{equation}\label{eq20}
\Vert u-\ox\Vert^2\le\frac{1}{\ell}\big\langle \Phi(u)-\prox\big(\Phi(u)+\rho^{-1}\lm\big),\olm-\lambda\big\rangle.
\end{equation}
By the first equality in \eqref{prm}, the inclusion  $\olm \in \partial g\big(\Phi(\ox)\big)$ amounts to $\Phi(\ox) = \prox\big(\Phi(\ox)+ \rho^{-1}\olm\big)$. This leads us to 
\begin{eqnarray}\label{clq7}
&&\big\Vert\Phi(u)-\prox\big(\Phi(u)+\rho^{-1}\lm\big)\big\Vert \nonumber\\
&=&\big\Vert\Phi(u)-\Phi(\ox)+\prox\big(\Phi(\ox)+\rho^{-1}\olm\big)-\prox\big(\Phi(u)+\rho^{-1}\lm\big)\big\Vert\nonumber\\
&\le&2\Vert\Phi(u)-\Phi(\ox)\Vert+\rho^{-1}{\Vert\olm-\lambda\Vert}\nonumber\\
&\le&2\kappa_\Phi\Vert u-\ox\Vert+\rho^{-1}{\Vert\olm-\lambda\Vert},
\end{eqnarray}
where the first inequality comes from  the fact that proximal mapping of a convex function is always nonexpansive (cf.  \cite[Proposition~12.19]{rw}), 
and where the last inequality results from \eqref{eq49}. This, combined with  \eqref{eq20},  brings us   to 
\begin{equation*}
\Vert u-\ox\Vert^2\le\frac{1}{\ell}\Big(2\kappa_\Phi\Vert u-\ox\Vert+\rho^{-1}{\Vert\lambda-\olm\Vert}\Big)\Vert\lambda-\olm\Vert.
\end{equation*}
Since the latter inequality  can be expressed  equivalently as 
\begin{equation*}
\ell\Vert u-\ox\Vert^2-2\kappa_\Phi\Vert \lambda-\olm\Vert\cdot\Vert u-\ox\Vert-\rho^{-1}{\Vert \lambda-\olm\Vert^2 }\le 0,
\end{equation*}
we arrive at  the estimate
\begin{equation*}
\Vert u-\ox\Vert\le\left(\dfrac{\kappa_\Phi}{\ell}+\sqrt{\frac{\kappa_\Phi^2}{\ell^2}+\frac{1}{\ell\rho}}\right)\Vert\lambda-\olm\Vert\le\tau\Vert\lambda-\olm\Vert \le \tau \hat\gamma/2\tau < \hat\gamma.
\end{equation*}
This clearly justifies   \eqref{pt9}   and  the inclusion $S_\rho(\lambda)\subset  \inte  \B_{\hat\gamma}(\ox)$ for all  $\lambda\in \B_{\hat\gamma/2\tau} (\olm)$ and $\rho\geq\bar\rho$,  and hence completes the proof.
\end{proof} 

\begin{Remark}{\rm Assume that  the SOSC \eqref{sosc} holds at $(\ox,\olm)$. It is not hard to see that a similar argument as those for \eqref{eq20}
shows that for any $\lm\in \R^m$, any $u\in S_\rho(\lm)$,  any $\mu \in \Lm(\ox)\cap \B_\ve(\olm)$, and any $\rho\ge \bar\rho$ we have 
\begin{equation*} 
\Vert u-\ox\Vert^2\le\frac{1}{\ell}\big\langle \Phi(u)-\prox\big(\Phi(u)+\rho^{-1}\lm\big),\mu-\lambda\big\rangle,
\end{equation*}
where constants $\bar\rho$, $\ve$, and $\ell$ come from Theorem~\ref{ugrowth}.
Combining this and the second equality in \eqref{ueq19} brings us to 
\begin{equation}\label{ueq20}
\Vert u-\ox\Vert^2\le\frac{1}{\ell\rho}\big\langle \nabla e_{1/\rho}\big(\Phi(u)+\rho^{-1}\lm\big)-\lm,\mu-\lambda\big\rangle,
\end{equation}
which will be utilized in the proof of Theorem~\ref{est} later in this section.
}
\end{Remark}

Next, we are going to establish an error bound estimate for the consecutive iterates of the ALM. 
In order to achieve this goal, the polyhedral set $\Th$ in \eqref{comp} has to be an affine set. We will explain 
after our proof of the aforementioned error bound estimate   why such a restriction on $\Th$ is required in our proof. 
Thus, in the rest of this paper, we assume further that the  polyhedral convex set  $\Th$ has a representation of the form 
\begin{equation}\label{Th}
\Th:=\big\{x\in\R^n\,|\, Bx = b\big \},
\end{equation} 
where $B$ is a $s\times n$ matrix and $b$ is a vector in $\R^s$. Furthermore, our convergence analysis of the ALM  requires to consider the \textit{residual function} $R: \R^n\times \R^m \to \R$ of the KKT system \eqref{vs}, given by
\begin{equation}\label{res}
R(x, \lm) : =  \dist\big(-\nabla_x L(x, \lm), N_\Th(x)\big)+\big\|\Phi(x)-{\rm{prox}}_g\big(\Phi(x)+\lm\big)\big\|, \quad (x, \lm) \in \R^n\times \R^m.
\end{equation}
We should add here that the residual function $R$ is taken from the error bound estimate \eqref{subr3}, which by Theorem~\ref{sooc} is satisfied 
under the SOSC \eqref{sosc}. 
It is not hard to see that  $R(\ox, \olm)=0$ whenever $(\ox, \olm)$ is a solution to the KKT system \eqref{vs}.  
While the estimate \eqref{subr3} requires the SOSC \eqref{sosc}, the next observation shows that the opposite inequality in \eqref{subr3}
holds without such a requirement. 

\begin{Proposition}[estimate for residual functions] \label{erct}  Assume that $(\ox,\olm)$ is a solution to the KKT system \eqref{vs} with $\Th$ taken from \eqref{Th}. Then 
there exist positive constants $\gamma_2$ and $\kappa_2$ such that
\begin{equation}\label{eb1}
R(x, \lambda) \leq \kappa_2\big(\|x-\ox\|+\dist(\lambda, \Lambda(\ox))\big)\quad\textrm{for all }\;\; (x, \lambda)\in \B_{\gamma_2}(\ox, \olm).
\end{equation} 
\end{Proposition}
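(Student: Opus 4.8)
The plan is to estimate the two summands of the residual $R$ in \eqref{res} separately, bounding each by a constant multiple of $\|x-\ox\|+\dist(\lambda,\Lambda(\ox))$. Since the first summand of $R$ is finite only when $x\in\Th$ (the regime in which all iterates of Algorithm~\ref{Alg1} live), I would work with $x\in\Th$ throughout. For $\lambda$ close to $\olm$ put $\bar\mu:=P_{\Lambda(\ox)}(\lambda)$, the projection onto the polyhedral convex set $\Lambda(\ox)$, so that $\|\lambda-\bar\mu\|=\dist(\lambda,\Lambda(\ox))$; because $\olm\in\Lambda(\ox)$ one has $\dist(\lambda,\Lambda(\ox))\le\|\lambda-\olm\|$, hence $\bar\mu$ stays in a fixed bounded neighborhood of $\olm$ as $\lambda$ ranges over $\B_{\gamma_2}(\olm)$. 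Being an element of $\Lambda(\ox)$, the multiplier $\bar\mu$ satisfies both $-\nabla_x L(\ox,\bar\mu)\in N_\Th(\ox)$ and $\bar\mu\in\partial g(\Phi(\ox))$. The decisive structural input is that $\Th$ is affine: by \eqref{Th}, for every $x\in\Th$ the normal cone equals the fixed subspace $N_\Th(x)=\rge B^*=(\ker B)^\perp$, so the first summand of $R$ becomes $\dist(-\nabla_x L(x,\lambda),\rge B^*)=\|P_{\ker B}\nabla_x L(x,\lambda)\|$ with one and the same orthogonal projection $P_{\ker B}$ at every point.

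For the first summand I would use $-\nabla_x L(\ox,\bar\mu)\in\rge B^*$, equivalently $P_{\ker B}\nabla_x L(\ox,\bar\mu)=0$, to write
\[
\|P_{\ker B}\nabla_x L(x,\lambda)\|=\big\|P_{\ker B}\big(\nabla_x L(x,\lambda)-\nabla_x L(\ox,\bar\mu)\big)\big\|\le\big\|\nabla_x L(x,\lambda)-\nabla_x L(\ox,\bar\mu)\big\|.
\]
Writing $\nabla_x L(x,\lambda)=\nabla\varphi(x)+\nabla\Phi(x)^*\lambda$ and splitting
\[
\nabla\Phi(x)^*\lambda-\nabla\Phi(\ox)^*\bar\mu=\nabla\Phi(x)^*(\lambda-\bar\mu)+\big(\nabla\Phi(x)^*-\nabla\Phi(\ox)^*\big)\bar\mu,
\]
the $\mathcal{C}^2$ smoothness of $\varphi$ and $\Phi$ furnishes local Lipschitz moduli for $\nabla\varphi$ and $\nabla\Phi^*$ on $\B_{\gamma_2}(\ox)$, while $\|\nabla\Phi(x)^*\|$ and $\|\bar\mu\|$ are bounded there. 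This yields a bound of the form $c_1\|x-\ox\|+c_2\|\lambda-\bar\mu\|=c_1\|x-\ox\|+c_2\dist(\lambda,\Lambda(\ox))$.

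For the second summand I would invoke \eqref{prm} with $r=1$: the inclusion $\bar\mu\in\partial g(\Phi(\ox))$ is equivalent to ${\rm{prox}}_g(\Phi(\ox)+\bar\mu)=\Phi(\ox)$. Inserting this identity and using that the proximal mapping of the convex function $g$ is nonexpansive (cf. \cite[Proposition~12.19]{rw}) gives
\[
\big\|\Phi(x)-{\rm{prox}}_g(\Phi(x)+\lambda)\big\|\le\|\Phi(x)-\Phi(\ox)\|+\big\|(\Phi(\ox)+\bar\mu)-(\Phi(x)+\lambda)\big\|\le 2\|\Phi(x)-\Phi(\ox)\|+\|\lambda-\bar\mu\|,
\]
and the local Lipschitz continuity of $\Phi$ near $\ox$ bounds $\|\Phi(x)-\Phi(\ox)\|$ by $\kappa_\Phi\|x-\ox\|$. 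Together with $\|\lambda-\bar\mu\|=\dist(\lambda,\Lambda(\ox))$ this is again of the desired form. Adding the two estimates and taking $\kappa_2$ to be the resulting combined constant completes the proof.

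The only genuinely delicate point is the first summand, and it is exactly where the affine form \eqref{Th} of $\Th$ is indispensable. For a merely polyhedral $\Th$ the normal-cone mapping $N_\Th$ is only outer semicontinuous and may shrink as $x$ leaves the face containing $\ox$ (for instance into the relative interior), so $\dist(-\nabla_x L(x,\lambda),N_\Th(x))$ need not tend to $0$ as $x\to\ox$ unless a strict complementarity condition holds, and the claimed upper bound would fail; with $N_\Th$ constant this obstruction vanishes and everything reduces to the Lipschitz estimates above. Apart from this, the routine care is to shrink $\gamma_2$ once and for all so that the invoked local Lipschitz constants and the bound on $\|\bar\mu\|$ hold simultaneously on $\B_{\gamma_2}(\ox,\olm)$.
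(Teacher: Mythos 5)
Your proof follows essentially the same route as the paper's: project $\lambda$ onto $\Lambda(\ox)$ to get $\bar\mu$, exploit that \eqref{Th} makes $N_\Th(\cdot)$ constant on $\Th$ so the first summand reduces to Lipschitz estimates on $\nabla_x L$, and handle the second summand via $\Phi(\ox)={\rm prox}_g(\Phi(\ox)+\bar\mu)$ together with nonexpansiveness of the proximal mapping, exactly as in the paper. If anything, your version is slightly more careful: the paper compares $\nabla_x L(x,\lambda)$ with $\nabla_x L(\ox,\olm)$, which strictly speaking produces $\|\lambda-\olm\|$ rather than $\dist(\lambda,\Lambda(\ox))$, whereas your comparison with $\nabla_x L(\ox,\bar\mu)$ delivers the distance term directly.
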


\begin{proof} By \eqref{Th},  we have $N_\Th(x) = N_\Th(\ox)$ for all $x\in \Th$. Let $\mu=P_{\Lm(\ox)}(\lm)$
and observe that  $(\ox,\mu)$ is a solution to the KKT system \eqref{vs}. So, we know from \eqref{prm} that    $\mu \in \partial g \big(\Phi(\ox)\big)$ yields   $\Phi(\ox) = {\rm prox}_g\big(\Phi(\ox)+ \mu\big)$.
Using these tells us that 
\begin{eqnarray*}
R(x, \lambda)&=& \dist\big(-\nabla_x L(x, \lm), N_\Th(x)\big)+\big\|\Phi(x)-{\rm{prox}}_g\big(\Phi(x)+\lm\big)\big\|\\
&\le & \|\nabla_x L(x, \lm)-\nabla_x L(\ox, \olm)  \|+  \dist\big(-\nabla_x L(\ox, \olm), N_\Th(\ox)\big)\\
&&+  \|\Phi(x)-\Phi(\ox)\|+ \big\|{\rm{prox}}_g\big(\Phi(x)+\lm\big)-{\rm{prox}}_g\big(\Phi(\ox)+\mu\big)\big\|\\
&\le &\|\nabla_x L(x, \lm)-\nabla_x L(\ox, \olm)  \| +  2\|\Phi(x)-\Phi(\ox)\|+ \dist(\lambda, \Lambda(\ox)).
\end{eqnarray*}
This, together with \eqref{eq49},  ensures the existence of positive 
constants  $\gamma_2$ and $\kappa_2$ for which the estimate \eqref{eb1} is satisfied. 
\end{proof}

We are now in a position to establish an error bound estimate for the consecutive iterates of the ALM for the composite problem 
\eqref{comp} under the SOSC \eqref{sosc}.

\begin{Theorem}[error bound for consecutive iterates of ALM]\label{est} 
Let $(\ox,\olm)$ be a solution to the KKT system \eqref{vs} and  let the SOSC \eqref{sosc} hold at $(\ox,\olm)$. Then there exist positive constants   $\gamma_3$, and $\kappa_3$ such that for any $\rho\geq \bar\rho$ with 
$\bar\rho$ taken from Theorem~{\rm\ref{ugrowth}}, any $(x,\lambda)\in\B_{\gamma_3}(\ox,\olm)$ with $x\in \Th$ and  $R(x,\lambda)>0$, and any   optimal solution $u$ to the regularized problem 
\begin{equation}\label{regp}
\mini \L(w, \lm, \rho)\quad \mbox{subject to}\quad w\in \Th\cap \B_{\hat\gg}(\ox)
\end{equation} 
with $\hat\gg$ taken from Proposition~{\rm\ref{solv}},   the error bound estimate
\begin{equation}\label{est0}
\Vert u-x\Vert+\big\Vert\nabla\menv\big(\Phi(u)+ \rho^{-1}{\lm}\big)-\lm\big\Vert\le\kappa_3\,R(x,\lambda)
\end{equation}
holds, where the residual function  $R$ is  taken from \eqref{res}.
\end{Theorem}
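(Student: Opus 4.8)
The plan is to bound the primal displacement $\|u-x\|$ and the dual displacement $\|\lm^+-\lm\|$ separately, where $\lm^+:=\nabla\menv\big(\Phi(u)+\rho^{-1}\lm\big)$ is the multiplier appearing in \eqref{est0}, each by a constant multiple of $R(x,\lm)$. The backbone is the SOSC-driven error bound: shrinking $\gamma_3$ so that $\B_{\gamma_3}(\ox,\olm)\subset\B_{\gamma_1}(\ox,\olm)$, Theorem~\ref{sooc} gives $\|x-\ox\|+\dist\big(\lm,\Lambda(\ox)\big)\le\kappa_1R(x,\lm)$. I write $d:=\dist\big(\lm,\Lambda(\ox)\big)$ and $\mu:=P_{\Lambda(\ox)}(\lm)$, so $\|\mu-\lm\|=d$ and, after shrinking $\gamma_3$, $\mu\in\Lambda(\ox)\cap\B_{\ve}(\olm)$ with $\ve$ from Theorem~\ref{ugrowth}. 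First I would record that, for $\rho\ge\bar\rho$ and $(x,\lm)$ close to $(\ox,\olm)$, Proposition~\ref{solv} provides $u\in\inte\B_{\hat\gamma}(\ox)$, so the ball constraint in \eqref{regp} is inactive and $u$ is stationary for $\min_{w\in\Th}\L(w,\lm,\rho)$; using $\nabla_x\L(u,\lm,\rho)=\nabla_xL(u,\lm^+)$ this reads $0\in\nabla_xL(u,\lm^+)+N_\Th(u)$, while the resolvent identities \eqref{prm} yield the perturbed inclusion $\lm^+\in\partial g\big(\Phi(u)+\rho^{-1}(\lm-\lm^+)\big)$.

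For the primal part the crude isolated-calmness estimate already suffices, and it is $\rho$-independent. Applying \eqref{ueq20} from the Remark following Proposition~\ref{solv} with the multiplier $\mu$, I get $\|u-\ox\|^2\le\tfrac{1}{\ell\rho}\langle\lm^+-\lm,\mu-\lm\rangle\le\tfrac{1}{\ell\rho}\|\lm^+-\lm\|\,d$. Feeding in the nonexpansiveness bound $\|\lm^+-\mu\|\le\rho\|\Phi(u)-\Phi(\ox)\|+d$, obtained by monotonicity of $\partial g$ applied to the two inclusions $\lm^+\in\partial g(\cdot)$ and $\mu\in\partial g(\Phi(\ox))$ together with \eqref{eq49}, and solving the resulting quadratic in $\|u-\ox\|$ exactly as in the proof of Proposition~\ref{solv}, I obtain $\|u-\ox\|\le\tau\,d$ with $\tau$ from \eqref{eq21}. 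The triangle inequality then gives $\|u-x\|\le\|u-\ox\|+\|\ox-x\|\le(\tau+1)\kappa_1R(x,\lm)$.

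The delicate part is the dual displacement, for which a direct use of the bound above only yields the useless estimate $\|\lm^+-\lm\|=O(\rho\,d)$. The idea is to invoke the error bound a second time, now at the updated pair $(u,\lm^+)$. From the perturbed inclusion one has $\Phi(u)-\big(\Phi(u)+\rho^{-1}(\lm-\lm^+)\big)=\rho^{-1}(\lm^+-\lm)$, and since $\mathrm{prox}_g$ is nonexpansive and the first term of the residual vanishes by stationarity of $u$, I would show $R(u,\lm^+)\le2\rho^{-1}\|\lm^+-\lm\|$; Theorem~\ref{sooc} at $(u,\lm^+)$ then delivers the crucial $\rho^{-1}$ gain $\dist\big(\lm^+,\Lambda(\ox)\big)\le2\kappa_1\rho^{-1}\|\lm^+-\lm\|$. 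Here the affine form \eqref{Th} of $\Th$ is essential: it keeps $N_\Th$ constant, so that stationarity of $u$ forces the first residual term at $(u,\lm^+)$ to vanish and $(u,\lm^+)$ remains in the region where \eqref{subr3} applies. I expect the main obstacle to be converting this distance bound into a genuine bound on $\|\lm^+-\lm\|$: decomposing $\lm^+-\lm$ relative to the polyhedral set $\Lambda(\ox)$, its normal component is controlled by $\dist\big(\lm^+,\Lambda(\ox)\big)+d$, and the real content is that its tangential component along $\Lambda(\ox)$ is of order $\rho^{-1}\|\lm^+-\lm\|$ — a fact I would extract from the monotonicity inequality (which constrains $\lm^+-\mu$ in the directions tangent to $\Lambda(\ox)$) together with the local affineness of $P_{\Lambda(\ox)}$ on the polyhedron $\Lambda(\ox)$, so that $\|P_{\Lambda(\ox)}(\lm^+)-\mu\|$ stays within $O\big(\rho^{-1}\|\lm^+-\lm\|\big)$. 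Enlarging $\bar\rho$ then turns $2\kappa_1\rho^{-1}$ into a contraction factor, and the chain $\|\lm^+-\lm\|\le\dist\big(\lm^+,\Lambda(\ox)\big)+\|P_{\Lambda(\ox)}(\lm^+)-\mu\|+d$ closes the loop to give $\|\lm^+-\lm\|\le C\,d\le C\kappa_1R(x,\lm)$ with $C$ independent of $\rho$.

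Adding the primal and dual estimates and taking $\kappa_3$ to be the resulting constant completes the argument; the hypothesis $R(x,\lm)>0$ merely excludes the trivial already-stationary case. The whole scheme rests on applying \eqref{subr3} twice — once at the current iterate to express every displacement through $R(x,\lm)$, and once at the updated iterate to harvest the $\rho^{-1}$ factor that tames the dual step — and the principal risk is precisely the loop-closing for $\|\lm^+-\lm\|$, where both the affineness of $\Th$ and the enlargement of $\bar\rho$ are indispensable.
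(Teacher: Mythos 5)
Your primal estimate is correct and is essentially the paper's: combining \eqref{ueq20} (with $\mu=P_{\Lambda(\ox)}(\lm)$ in place of $\olm$, which Theorem~\ref{ugrowth} permits) with the nonexpansiveness of the proximal mapping and the quadratic inequality from Proposition~\ref{solv} does give $\|u-\ox\|\le\tau\,\dist(\lm,\Lambda(\ox))$, hence $\|u-x\|\le(\tau+1)\kappa_1R(x,\lm)$. Likewise, your identity $R(u,\lm^+)\le\rho^{-1}\|\lm^+-\lm\|$ is sound (it is the same computation the paper performs in \eqref{eq3}--\eqref{eq2}). The dual estimate, however, has two genuine gaps. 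First, a circularity: invoking Theorem~\ref{sooc} at the updated pair $(u,\lm^+)$ requires $(u,\lm^+)\in\B_{\gg_1}(\ox,\olm)$, i.e.\ a bound on $\|\lm^+-\olm\|$ that is uniform in $\rho\ge\bar\rho$. But the only a priori bound available is $\|\lm^+-\lm\|\le 2\rho\kappa_\Phi\|u-\ox\|+2\,\dist(\lm,\Lambda(\ox))=O(\rho)\,\dist(\lm,\Lambda(\ox))$, which blows up as $\rho\to\infty$ for fixed $\gamma_3$; no shrinking of $\gamma_3$ (which must be chosen before $\rho$) fixes this. Placing $(u,\lm^+)$ inside the error-bound neighborhood is essentially equivalent to the estimate you are trying to prove. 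It is telling that the paper does apply \eqref{subr3} at the updated pair $(\xkk,\lkk)$ — but only in Theorem~\ref{locpd}, \emph{after} Theorem~\ref{est} is available to guarantee that the updated pair stays in the ball.

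Second, and more fundamentally, the loop-closing step is asserted rather than proved. Nonexpansiveness of $P_{\Lambda(\ox)}$ only gives $\|P_{\Lambda(\ox)}(\lm^+)-\mu\|\le\|\lm^+-\lm\|$ (a useless factor $1$), monotonicity of $\partial g$ yields inner-product inequalities rather than norm decompositions, and ``local affineness'' of the projection onto the polyhedron $\Lambda(\ox)$ does not control $P_{\Lambda(\ox)}(\lm^+)$ when $\lm^+$ moves across faces. The claim that the tangential component of $\lm^+-\lm$ along $\Lambda(\ox)$ is $O\big(\rho^{-1}\|\lm^+-\lm\|\big)$ is precisely the hard content of the theorem, and nothing in your sketch delivers it. This is exactly where the paper expends its effort: arguing by contradiction with sequences $(\xk,\lk,\rok)$, it splits into the case where $\rok$ or $\rok/\beta_k$ stays bounded (easy, and roughly matching your $\rho^{-1}$-gain heuristic) and the case where both are unbounded, where it exploits the CPLQ representation of $g$ — fixing a polyhedral piece $C_{\oi}$ containing the perturbed points $z^k$, stabilizing the active index sets, forming the cone $\Omega$ and its polar, and projecting onto the subspace $E=\{z\,|\,\nabla\Phi(\ox)^*z\in\rge B^*\}$ (linear because $\Th$ is affine) — to force the limiting dual direction $\eta$ to satisfy both $-\eta\in\Omega^*$ and $-\eta\in P_E(\Omega)$, whence $\|\eta\|^2\le0$. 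Your argument never uses the inclusion $\lm^+\in\partial g(z)$ at the perturbed point $z\ne\Phi(\ox)$, i.e.\ the finite facial structure of the pieces $C_i$, which the paper explicitly identifies as the indispensable ingredient; polyhedrality of $\Lambda(\ox)$ alone cannot substitute for it.
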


\begin{proof} It follows from the SOSC \eqref{sosc} and    Theorem~\ref{ugrowth} that there are positive  constants $\bar\rho$, $\gamma$, $\varepsilon$, and $\ell$ for which 
the uniform quadratic  growth condition \eqref{eq14} holds  for all $\rho\ge\bar\rho$ and all $\lambda\in\Lambda(\ox)\cap\B_\varepsilon(\olm)$. 
According to Proposition~\ref{solv}, the solution mapping to \eqref{regp}, denoted by $S_\rho(\lm)$ therein, satisfies the uniform isolated calmness \eqref{pt9} and the condition $S_\rho(\lm)\subset  \inte  \B_{\hat\gamma}(\ox)$ for all $\lm\in \B_{\hat\gamma/2\tau}(\olm)$ and all $\rho\ge \bar\rho$, where both positive constants $\tau$ and $\hat\gg$ are taken from this proposition. This, in particular, tells us that for every $\lm\in \B_{\hat\gamma/2\tau}(\olm)$ and every $\rho\ge \bar\rho$
any optimal solution $u$ to \eqref{regp} satisfies the first-order optimality condition 
\begin{equation}\label{optc}
0\in \nabla_x\L(u, \lm, \rho)+ N_\Th(u).
\end{equation}
After this presentation, assume by contradiction that the error bound estimate \eqref{est0} fails. The latter implies that  there exists a sequence $\{(\xk, \lk, \rok)\b\subset \Th \times \R^m\times [\bar\rho, \infty)$ 
with $\xk\to\ox, \lk\to\olm$ as $k\to\infty$ such that
\begin{equation}\label{est1}
\Vert u^k-\xk\Vert+\| d^k-\lk\|>kR_k\quad \mbox{with}\;\;d^k:=\nabla\menvk\big(\Phi(u^k)+\rho_k^{-1}\lk\big),
\end{equation} 
where $u^k$ is an optimal solution to \eqref{regp} for $(\lambda, \rho):=(\lk, \rok)$, and where $R_k:=R(\xk, \lk)$ for each $k \in \mathbb{N}$. Denote by $\beta_k$ the left-hand side of \eqref{est1}, and thus get $R_k=o(\beta_k)$.
 By the definition of $R_k$, the latter yields
\begin{equation}\label{est2}
-\nabla_xL(\xk, \lk)+o(\xik)\in N_\Th(\xk)\quad\textrm{and}\quad \Phi(\xk)+o(\xik)= {\rm{prox}}_g\big(\Phi (\xk)+\lk\big).
\end{equation}
Passing to a subsequence if necessary, we can find $(\xi,\eta)\in \R^n\times \R^m$ such that 
\begin{equation}\label{est4}
\dfrac{u^k-\xk}{\xik}\to \xi\quad \mbox{and}\quad  \dfrac{ d^k-\lk}{\xik} \to  \eta \quad \mbox{with}\;\; (\xi,\eta)\neq 0.
\end{equation}
We know from Theorem~\ref{sooc} that the error bound \eqref{subr3} holds for all $(x,\lm)$ sufficiently close to  $(\ox, \olm)$. 
Set $\mu^k:=P_{\Lambda(\ox)}(\lk)$ and remember  that $(\xk, \lk)\to(\ox, \olm)$.  By  \eqref{subr3}, we can assume without loss of generality that $\xk-\ox=O(R_k)$ and $\lk-\mu^k=O(R_k)$ for all $k\in \N$, which in turn results in
\begin{equation}\label{est3}
\xk-\ox=o(\xik)\quad\textrm{and}\quad\lk-\mu^k=o(\xik)\quad\textrm{as } k\to\infty.
\end{equation}
The latter along with $\lk\to \olm$ tells us that $\mu^k\to \olm$. So, we get   $\mu^k\in\Lambda(\ox)\cap\B_\varepsilon(\olm)$ for all $k $ sufficiently large. 
This, combined with  $\rho_k\ge\bar\rho$ and \eqref{ueq20}, implies that 
\begin{eqnarray}\label{est5}
\Vert u^k-\ox\Vert^2&\le&\dfrac{1}{\rok\ell} \langle d^k-\lk,\mu^k-\lk \rangle\le\dfrac{1}{\rok\ell}\| d^k-\lk\|\cdot\Vert\mu^k-\lk\Vert,
\end{eqnarray}
which together with \eqref{est4} and  \eqref{est3} results in 
\begin{equation*}
\dfrac{\Vert u^k-\ox\Vert^2}{\xik^2}\le\dfrac{1}{\rok\ell}\dfrac{\| d^k-\lk\|}{\xik}\cdot\dfrac{\Vert\mu^k-\lk\Vert}{\xik}\to 0\quad\textrm{ as }\;k\to\infty.
\end{equation*}
Thus, we get $u^k-\ox=o(\beta_k)$. Combining this with \eqref{est3} clearly shows that 
\begin{equation*}
\xi=\lim_{k\to\infty}\dfrac{u^k-\xk}{\xik}=\lim_{k\to\infty}\dfrac{u^k-\ox}{\xik}-\lim_{k\to\infty}\dfrac{\xk-\ox}{\xik}=0-0=0.
\end{equation*}
If either the sequence $\{\rok\b$ or  $\{{\rok}/{\xik}\b$ is bounded, it follows from $u^k-\ox=o(\beta_k)$ that  $\dfrac{\rok}{\xik}\Vert u^k-\ox\Vert\to0$ as $k\to\infty$. 
This, \eqref{ueq19}, and a similar argument as those for \eqref{clq7} bring us to 
\begin{eqnarray*}
 \dfrac{ \|d^k-\lk\|}{\xik}
&=&\frac{\rok}{\xik} \big\Vert \Phi(u^k)-\proxk\big(\Phi(u^k)+\rho_k^{-1}\lk\big)\big\Vert\\
&\leq& 2\kappa_\Phi\frac{\rok}{\xik}\|u^k-\ox\|+\dfrac{\Vert \mu^k-\lk\Vert }{\xik}\to0\quad\textrm{as } k\to\infty,  
\end{eqnarray*} 
which by \eqref{est4} yields $\eta =0$. This is  a contradiction with \eqref{est4} since we showed that $(\xi,\eta)=0$.

Assume now that both sequences $\{\rok\b$ and  $\{{\rok}/{\xik}\b$  are unbounded.  We can assume by passing  to a subsequence if necessary that 
\begin{equation}\label{unb}
\rok\to \infty\quad \mbox{and}\quad \frac{\rok}{\beta_k}\to \infty\;\;\mbox{as}\;\; k\to \infty.
\end{equation}
Since $u^k$ is an optimal solution to \eqref{regp} associated with $(\lk,\rok)$, we deduce from \eqref{optc} that 
\begin{equation*}
0\in \nabla_x\L(u^k, \lk, \rok)+ N_\Th(u^k).
\end{equation*} 
By \eqref{Th}, we get  $N_{\Th}(u^k)=N_\Th(\xk) = \rge B^*$. These along with  \eqref{est2} tell us that 
\begin{equation*}
\nabla_x\L(u^k, \lk, \rok)-\nabla_xL(\xk, \lk)+o(\xik)\in N_\Th(\xk)-N_\Th(u^k)= \rge B^*.
\end{equation*}
Using the definitions of the augmented Lagrangian $\L$ and  $L$, we get 
\begin{eqnarray*}
\rge B^*&\ni& \nabla_x\L(u^k, \lk, \rok)-\nabla_xL(\xk, \lk)+o(\xik)\\
&=&\nabla \varphi(u^k)-\nabla \varphi(\xk)+\nabla\Phi(u^k)^* \nabla\menvk\big(\Phi(u^k)+\rho_k^{-1}\lk\big)-\nabla\Phi(\xk)^*\lk+o(\xik)\\
&=&\big(\nabla\Phi(u^k)-\nabla\Phi(\xk)\big) ^*\lk+\nabla\Phi(u^k)^*\big (d^k-\lk\big )+o(\xik)\\
&=&\nabla\Phi(u^k)^* \big(d^k-\lk\big)+o(\xik),
\end{eqnarray*}
where the second and last equalities result from  the Lipschitz continuity of  $\nabla \varphi$ and $\nabla \Phi$ around $\ox$ and from  the fact that $u^k-\xk=o(\xik)$. 
Dividing both sides  by $\xik$ and then letting  $k\to\infty$ confirm via \eqref{est4} that 
\begin{equation*}
\nabla\Phi(\ox)^*\eta \in\rge B^*.
\end{equation*} 
We are going to show that $\eta=0$, which contradicts \eqref{est4} and thus completes the proof. To furnish this, set $E:=\big\{z \in \R^n\, |\, \nabla \Phi(\ox)^*z\in \rge B^*\big\}$
and observe from \cite[Corollary~11.25(d)]{rw} that $E^\perp=\big\{\nabla\Phi(\ox)y|\;y\in \ker B\big\}$. So, we conclude  from  $\ox,u^k\in \Th$ and  \eqref{Th} that $\nabla\Phi(\ox)(u^k-\ox)\in E^\perp$.
Moreover, we obtain  from \eqref{est4}-\eqref{est5} that 
\begin{equation}\label{est6}
\dfrac{\rok}{\xik}\Vert u^k-\ox\Vert^2\leq\dfrac{1}{\ell} \dfrac{\| d^k-\lk\|}{\xik} \cdot\Vert \mu^k-\lk\Vert  \to 0 \quad\textrm{ as }k\to\infty.
\end{equation} 
Since $P_E$ is a linear mapping due to $E$ being a subspace, we arrive at 
\begin{eqnarray}\label{est7}
P_E\Big(\dfrac{\rok}{\xik}\big (\Phi(u^k)-\Phi(\ox)\big )\Big)&=&P_E\Big (\dfrac{\rok}{\xik}\nabla\Phi(\ox) (u^k-\ox )+O\big (\dfrac{\rok}{\xik}\Vert u^k-\ox\Vert^2 \big)\Big  )\nonumber\\
&=&P_E \Big(O\big(\dfrac{\rok}{\xik}\Vert u^k-\ox\Vert^2 \big )\Big)\to 0 \quad \textrm{as } k\to\infty,
\end{eqnarray} 
where the last equality results from $\nabla\Phi(\ox)(u^k-\ox)\in E^\perp$.
Fix $k\in \N$ and set  
$
z^k:= \Phi(u^k)+ \rho_k^{-1}\big(\lk -d^k\big).
$
It follows from $d^k=\nabla\menvk\big(\Phi(u^k)+ \rho_k^{-1}{\lk}\big)$ and the second equation in \eqref{prm} that 
\begin{equation}\label{clq8}
d^k \in \partial g(z^k).
\end{equation} 
Using  \eqref{est4} and \eqref{unb} allows us to arrive at 
\begin{equation*}
z^k =\Phi(u^k) - \frac{d^k-\lk}{\xik}\cdot\frac{\xik}{\rok} \to \Phi(\ox)=:\bar z\quad\textrm{as }\; k\to\infty.
\end{equation*} 
By \eqref{clq8}, we get $z^k\in \dom g=\cup_{i=1}^s C_i$ for all $k$, where the polyhedral convex sets $C_i$ are taken from \eqref{PWLQ}. 
By passing to a subsequence if necessary, we can assume without loss of generality that  for some $\oi\in I(\oz)$, we have $z^k \in C_\oi$ for all $k$, where the index set  $  I(\bar z)$  is defined by \eqref{act}.
Since $C_\oi$ is  polyhedral, we find   $l_\oi\in \N$, $c_{\oi, j}\in \R^m$, and $\al_{\oi, j}\in \R$,  $j=1,\ldots,l_\oi$,  such that 
\begin{equation*}
C_\oi= \big\{z\in \R^m\, |\, \langle c_{\oi, j}, z\rangle \leq \al_{\oi, j}, \quad j= 1, \ldots, l_\oi\big\}.
\end{equation*} 
Denote by $J(\bar z)$ and $J(z^k)$ the sets of the indexes of active inequalities in $C_\oi$ at $\bar z$ and $z^k$, respectively. Since $z^k\to \oz$, we obtain  $J(z^k)\subset J(\bar z)$ for all $k$ sufficiently large. 
Passing to a subsequence, if necessary, we can assume that there exists a subset $\bar J\subset J(\bar z)$ such that $J(z^k)=\bar J$ for all $k$. Thus, we have 
\begin{equation*}
z^k -\bar z \in \big\{v\in \R^m\, \big|\, \langle c_{\oi, j}, v\rangle= 0\;\textrm{ for } \;j \in \bar J, \; \langle c_{\oi, j}, v\rangle\leq 0\; \textrm{ for }\; j\in J(\bar z)\setminus \bar J \big\}=: \Omega.
\end{equation*} 
It follows from  the definitions of $z^k$ and $\bar z$ that 
\begin{equation*}
\Phi(u^k)-\Phi(\ox)-\frac{ d^k-\lk}{\rok}=z^k-\oz \in \Omega. 
\end{equation*}
Multiplying by $ {\rok}/{\xik}$ and using the linearity of  $P_E$  allow us to get 
\begin{equation*}
P_E\Big(\dfrac{\rok}{\xik}\big (\Phi(u^k)-\Phi(\ox)\big )\Big)- P_E \big(\frac{ d^k-\lk}{\xik}\big) \in P_E(\Omega),
\end{equation*} 
which, together with \eqref{est4}, \eqref{est7}, and the closedness of $P_E(\Omega)$, yields $-P_E(\eta) \in P_E(\Omega)$. Since $\eta\in E$, the latter confirms that 
\begin{equation}\label{est8}
-\eta=-P_E(\eta) = P_E(\zeta)\quad\textrm{for some }\;\; \zeta\in \Omega.
\end{equation}
Remembering $\mu^k\in \Lm(\ox)$, we obtain $\mu^k \in \partial g(\bar z)$. By the representation of $\partial g(\bar z)$ from \eqref{sub} and the fact that  $\oi\in I(\bar z)$, we conclude that 
\begin{equation*}\label{clq9}
\mu^k - A_\oi\bar z -a_\oi \in N_{C_\oi}(\bar z) = \cone\big\{c_{\oi, j}\, | \;j\in J(\bar z)\big\}.
\end{equation*} 
Since we also have $\oi\in I(z^k)$, it follows again from \eqref{clq8} and  \eqref{sub} that 
\begin{equation*}\label{clq10}
d^k -A_\oi z^k-a_\oi \in N_{C_\oi}(z^k) = \cone\big\{c_{\oi, j}\,|\; j\in \bar J\big\}.
\end{equation*} 
Combining these results in 
\begin{eqnarray*}
 \mu^k-d^k + A_\oi(z^k-\bar z)&\in&   \cone\big\{c_{\oi, j}\, | \;j\in J(\bar z)\big\}-\cone\big\{c_{\oi, j}\,|\; j\in \bar J\big\}\\
&=&\span\big\{c_{\oi, j}\,|\;  j\in \bar J\big\}+\cone\big\{c_{\oi, j}\,|\; j\in J(\bar z)\setminus\bar J\big\} =  \Omega^*.
\end{eqnarray*} 
Since $\lk -\mu^k=o(\xik)$ and $u^k-\ox=o(\beta_k)$, we deduce from \eqref{est4} and \eqref{unb} that 
\begin{eqnarray*}
\frac{ \mu^k-d^k}{\beta_k}+A_\oi\big(\frac{z^k-\oz}{\beta_k}\big)&=& \frac{ \mu^k-\lk}{\beta_k}+\frac{ \lk-d^k}{\beta_k} +A_\oi\big(\frac{\Phi(u^k)-\Phi(\ox)}{\beta_k}\big)\\
&&-\frac{1}{\rok}A_\oi\big(\frac{d^k-\lk}{\beta_k}\big)\to -\eta \;\;\;\;\mbox{as}\;\;k\to \infty,
\end{eqnarray*}
which in turn demonstrates that 
$-\eta \in \Omega^*$. This, combined with \eqref{est8} and $\eta\in E$, shows that 
\begin{equation*}
\|\eta\|^2= \langle \eta, \eta \rangle = \langle \eta, P_E(\eta) \rangle= -\langle \eta, P_E(\zeta) \rangle= -\big\langle \eta, P_E(\zeta)+P_{E^{\perp}}(\zeta) \big\rangle = -    \langle \eta, \zeta\rangle \le 0,
\end{equation*} 
implying that  $\eta =0$. This clearly contradicts \eqref{est4} and thus completes the proof.
\end{proof} 

As promised before, we want to discuss the reasons that  forced us to assume the representation \eqref{Th} for the polyhedral convex set $\Th$ in \eqref{comp}. 
The first place that requires such a representation of $\Th$ is Proposition~\ref{erct}. The other one happens in  the proof of Theorem~\ref{est}, where we
 require that the projection mapping of $P_E$ for the set $E$ in this proof be linear. If we do not assume \eqref{Th} for $\Th$, 
we can only ensure that the projection mapping $P_E$ is piecewise linear, which is not enough in our proof. Note also  that   most of the proof 
of Theorem~\ref{est} does not require that the function  $g$ be CPLQ. In fact, only the last part of the latter proof, which deals with the set $\O$, utilizes the peculiar geometry of CPLQ functions. 
Observe that the set $\O$ in this proof is   a critical   cone of the polyhedral convex set $C_\oi$ at $\oz$ for some $\hat\lm\in \Lm(\ox)$, which  can be different from $\olm$,  and so  is a face of $T_{C_\oi}(\oz)$. Since $C_\oi$ is polyhedral, there are only finitely many faces of $T_{C_\oi}(\oz)$, a fact that plays a key role in our proof and makes it difficult to extend this result for nonpolyhedral optimization problems. A similar result was established for NLPs in \cite[Proposition~3.3]{fs12}, namely when $g=\dd_{\R^m_-}$ and $\Th=\R^n$ in the composite problem \eqref{comp}.
However, the Lagrange multiplier in \cite[Proposition~3.3]{fs12} is restricted to belong to $\R^m_+$. As shown in our proof,  this  can be dropped with no harm by a small modification in the proof. 

\begin{Remark}{\rm Given $(\xk,\lk)\in  \B_{ \gg_3}(\ox,\olm)$ and $\rok\ge \bar \rho$ with $\gg_3$ and $\bar\rho$ taken from Theorem~\ref{est},   observe from Theorem~\ref{est} that 
any optimal solution $x^{k+1}$ to the subproblem \eqref{regp} for $\lm=\lk$ and $\rho=\rok$ satisfies  the error bound estimate 
 \begin{equation}\label{cone2}
 \|x^{k+1}-x^k\|+\|\lm^{k+1}-\lk\|\le \kappa_3 R(x^k,\lk)\quad \mbox{with}\;\;\lm^{k+1}:=\nabla\menv\big(\Phi(x^{k+1})+ \rok^{-1}{\lm}\big),
 \end{equation}
 where $\kappa_3$ comes from \eqref{est0}. Increasing $\kappa_3$ if necessary, one can see that both  \eqref{xkk} and \eqref{cone2} are satisfied for 
 any $\tilde x^{k+1}\in \Theta$ sufficiently close to $x^{k+1}$. In fact, we can find a constant $\ell \ge 0$ such that any $\tilde x^{k+1}\in \Theta$ sufficiently close to $x^{k+1}$, we can conclude 
that  $N_\Th(\tilde x^{k+1})=N_\Th(  x^{k+1})$ due to \eqref{Th} and that 
 \begin{eqnarray*}
&& \dist\big(-\nabla_x\L(\tilde x^{k+1}, \lk, \rok),N_\Th(\tilde x^{k+1})\big)=    \dist\big(-\nabla_x\L(\tilde x^{k+1}, \lk, \rok),N_\Th(x^{k+1})\big)\\
 &\le& \dist\big(-\nabla_x\L(\xkk, \lk, \rok),N_\Th(\xkk)\big) +\|\nabla_x\L(\tilde x^{k+1}, \lk, \rok)-\nabla_x\L(  x^{k+1}, \lk, \rok)\|\\
 &=& \|\nabla_x\L(\tilde x^{k+1}, \lk, \rok)-\nabla_x\L(  x^{k+1}, \lk, \rok)\| \le \|\nabla \ph(\tilde x^{k+1})- \nabla \ph(  x^{k+1})\|\\
 &&+ \|\nabla \Phi(\tilde x^{k+1})^*\nabla\menvk\big(\Phi(\tilde  x^{k+1})+ \rok^{-1}{\lm}^{k}\big)- \nabla \Phi(  x^{k+1})^*\nabla\menvk\big(\Phi(x^{k+1})+ \rok^{-1}{\lm}^{k}\big)\|\\
  &\le & \ell \|\tilde x^{k+1} -  x^{k+1}\|+ \ell\rok\| \tilde x^{k+1} -  x^{k+1}\| + \ell \|\lm^{k+1}\| \| \tilde x^{k+1} -  x^{k+1}\|,
\end{eqnarray*}
where the second equality comes from $x^{k+1}$ being an optimal solution to the subproblem \eqref{regp} for $\lm=\lk$ and $\rho=\rok$. 
This clearly proves \eqref{xkk} for a given tolerance parameter $\ve_k>0$ provided that $\tilde x^{k+1}$ is chosen so that 
$$
\big(\ell  + \ell\rok + \ell \|\lm^{k+1}\|\big)\| \tilde x^{k+1} -  x^{k+1}\|  \le \ve_k.
$$
Similarly, the error bound  \eqref{cone2} can be achieved  for  $\tilde x^{k+1}$ with $\kappa_3$ replaced with $2\kappa_3$ if    $\tilde x^{k+1}$ is chosen so that 
$$
(1+\ell'\rok)\| \tilde x^{k+1} -  x^{k+1}\| \le  \kappa_3 R(x^k,\lk),
$$
where $\ell'\ge 0$ is a Lipschitz constant for $\Phi$ around $\ox$. This, in fact, results from 
\begin{eqnarray*}
&& \| \tilde x^{k+1}-x^k\|+\|\nabla\menvk\big(\Phi(\tilde  x^{k+1})+ \rok^{-1}{\lm}^{k}\big) -\lk\| \\
&\le& \| \tilde x^{k+1} -  x^{k+1}\|  +\| \nabla\menvk\big(\Phi(\tilde  x^{k+1})+ \rok^{-1}{\lm}^{k}\big) -\lm^{k+1}\| + \|   x^{k+1} -  x^{k}\|+\| \lm^{k+1}-\lm^{k}\|\\
&\le & (1+\ell'\rok)\| \tilde x^{k+1} -  x^{k+1}\| + \kappa_3 R(x^k,\lk) \le 2\kappa_3 R(x^k,\lk).
\end{eqnarray*}
}
\end{Remark}

We are now ready to prove the main result of this section, namely the linear convergence of the inexact ALM.
 The proof exploits an iterative framework, proposed by Fisher in \cite[Theorem~1]{Fis02}, to achieve the superlinear
 convergence of generalized equations with non-isolated solutions. To this end, let  $\sigma:\R^n\times \R^m\to \R_+$ be a function that satisfies  the condition
 \begin{equation}\label{error}
 \sigma(x,\lm)=o\big(R(x,\lm)\big)\quad \mbox{as}\;\;(x,\lm)\to (\ox,\olm),
 \end{equation}
where $R$ is the residual function \eqref{res} and $(\ox,\olm)$ is a solution to the KKT system \eqref{vs}.
\begin{Theorem}[primal-dual linear convergence of ALM]\label{locpd} 
 Let $(\ox,\olm)$ be a solution to the KKT system \eqref{vs} and  let the SOSC \eqref{sosc} hold at $(\ox,\olm)$.
 Then there exist positive constants $\bar\gg$, $\bar\tau$, and $\bar\varrho$ such that for any starting point $(x^0,\lm^0)\in \B_{\bar\gg}(\ox,\olm)\cap\big(\Th\times \R^m\big)$
 there is a primal-dual sequence   $\{(\xk,\lk)\c$,  generated by Algorithm~{\rm\ref{Alg1}} with $\rok \ge \bar\varrho$ and   $\ek=\sigma(\xk,\lk)$ for all $k$, where $\sigma$ is defined by \eqref{error}, satisfying the  estimate
 \begin{equation}\label{cone}
 \|x^{k+1}-x^k\|+\|\lm^{k+1}-\lk\|\le \bar\tau R(x^k,\lk),
 \end{equation}
 where $R$ is the residual function defined by  \eqref{res}.
  Moreover, every such a sequence is convergent to $(\ox,\hat\lambda)$ for some $\hat\lambda\in\Lm(\ox)$, and its rate of convergence is linear.
  Furthermore, if $\rok\to \infty$,  the rate of convergence  of $\{(\xk,\lk)\c$ is superlinear. 
\end{Theorem}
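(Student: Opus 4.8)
The plan is to apply the abstract iterative framework of Fisher \cite[Theorem~1]{Fis02}, which converts a combination of two ingredients into linear (and, under an extra parameter condition, superlinear) convergence toward a possibly non-isolated solution set. The two ingredients are already in hand: the \emph{upper} error bound \eqref{eb1} from Proposition~\ref{erct} together with the \emph{lower} error bound \eqref{subr3} from Theorem~\ref{sooc}, which pin the residual $R$ between $\|x-\ox\|+\dist(\lm,\Lambda(\ox))$ up to multiplicative constants near $(\ox,\olm)$; and the consecutive-iterate estimate \eqref{est0} from Theorem~\ref{est}, which controls one exact ALM step by $R$ at the current iterate. First I would fix $\bar\varrho:=\bar\rho$ from Theorem~\ref{ugrowth} and shrink the starting radius $\bar\gg$ below $\gg_3$ (Theorem~\ref{est}), below $\gg_1$ and $\gg_2$ (the two error bounds), and below $\hat\gamma/2\tau$ (Proposition~\ref{solv}) so that the regularized subproblem \eqref{regp} is solvable with solutions in $\inte\B_{\hat\gg}(\ox)$ and every inexact iterate stays in the regime where all three estimates are valid.

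Next I would assemble the one-step contraction. Given $(\xk,\lk)\in\B_{\bar\gg}(\ox,\olm)$ with $\xk\in\Th$, let $x^{k+1}$ be an exact optimal solution to \eqref{regp} for $(\lk,\rok)$ and let $\lm^{k+1}$ be the associated multiplier via \eqref{lkk}; the Remark following Theorem~\ref{est} shows that the \emph{inexact} iterate $\tilde x^{k+1}$ satisfying \eqref{xkk} with $\ek=\sigma(\xk,\lk)$ still obeys \eqref{cone} with $\bar\tau:=2\kappa_3$, provided $\sigma(\xk,\lk)=o(R(\xk,\lk))$, which is exactly \eqref{error}. This gives \eqref{cone}. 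Combining \eqref{cone} with the triangle inequality and the upper bound \eqref{eb1} applied at $(x^{k+1},\lm^{k+1})$, I would estimate
\begin{equation*}
\|x^{k+1}-\ox\|+\dist\big(\lm^{k+1},\Lambda(\ox)\big)\le C\,R(x^{k+1},\lm^{k+1})+\dist\big(\lm^{k+1},\Lambda(\ox)\big),
\end{equation*}
and then express $R(x^{k+1},\lm^{k+1})$ in terms of the data at step $k$ by feeding the KKT-type relations satisfied by $(x^{k+1},\lm^{k+1})$ (stationarity \eqref{optc} and the prox-identity behind \eqref{lkk}) into the residual \eqref{res}; here the gap between the approximate stationarity \eqref{xkk} and exact stationarity is absorbed by the tolerance $\sigma$ through \eqref{error}. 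The outcome is a recursion of the form $d_{k+1}\le q\,d_k+o(d_k)$ with $d_k:=\dist((\xk,\lk),\{\ox\}\times\Lambda(\ox))$, which is precisely the hypothesis structure of Fisher's framework and yields linear convergence with $d_k\to 0$.

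Having $d_k\to 0$, I would extract convergence of the actual sequence: summability of $\|x^{k+1}-x^k\|+\|\lm^{k+1}-\lk\|$ follows from \eqref{cone} and the geometric decay of $R(\xk,\lk)\asymp d_k$, so $\{(\xk,\lk)\}$ is Cauchy and converges to some limit $(\ox,\hat\lambda)$; the distance of $\lk$ to $\Lambda(\ox)$ going to zero together with closedness of $\Lambda(\ox)$ forces $\hat\lambda\in\Lambda(\ox)$. Finally, for the superlinear statement when $\rok\to\infty$, I would revisit the dependence on $\rok$ inside the consecutive-iterate estimate: the multiplier-update term in \eqref{est0} carries a factor that improves as $\rok^{-1}$ (visible in the second equality of \eqref{ueq19} and in \eqref{ueq20}), so the contraction factor $q$ can be taken proportional to $\rok^{-1}$, driving $q\to 0$ and hence producing superlinearity via the same Fisher template. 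The main obstacle, I expect, is the bookkeeping in the middle paragraph: correctly bounding $R(x^{k+1},\lm^{k+1})$ \emph{at the new iterate} by quantities available \emph{at the old iterate}, since the residual is defined through a prox/normal-cone pair that the exact iterate satisfies only approximately under \eqref{xkk}, so one must track how the tolerance $\sigma$ and the penalty $\rok$ enter and verify they do not spoil the $o(d_k)$ remainder — this is where the representation \eqref{Th} of $\Th$ (ensuring $N_\Th$ is constant along $\Th$) and the error bound \eqref{subr3} do the decisive work.
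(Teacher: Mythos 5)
Your overall architecture is the same as the paper's: the two error bounds \eqref{subr3} and \eqref{eb1}, the consecutive-iterate estimate \eqref{est0}, and a Fischer-type induction are exactly the ingredients the paper assembles (the paper carries out the induction by hand rather than citing \cite[Theorem~1]{Fis02} as a black box, but that is cosmetic). However, there are two genuine gaps. First, your choice $\bar\varrho:=\bar\rho$ is too weak. The one-step estimate you need has the form
\[
\Vert\xkk-\ox\Vert+\dist\big(\lkk,\Lambda(\ox)\big)\;\le\;\kappa_1\ek+\frac{\kappa_1\bar\tau}{\rok}\,R_k,
\]
and to convert its right-hand side into a fixed fraction of $\Vert\xk-\ox\Vert+\dist\big(\lk,\Lambda(\ox)\big)$ via \eqref{eb1}, the quantity $\kappa_1\bar\tau/\rok$ must be small relative to $1/\kappa_2$; this is why the paper takes $\bar\varrho=\max\big\{\bar\rho,\,4\kappa_1\kappa_2\bar\tau,\,2\sqrt{2}\kappa_1\kappa_2^2\bar\tau^2/q\big\}$ in \eqref{rhof}. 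The constant $\bar\rho$ from Theorem~\ref{ugrowth} only guarantees the uniform quadratic growth and solvability of the subproblems and bears no relation to $\kappa_1,\kappa_2,\kappa_3$, so with $\bar\varrho=\bar\rho$ alone your recursion $d_{k+1}\le q\,d_k+o(d_k)$ with $q<1$ cannot be derived. (Your own superlinear discussion shows you know the contraction factor scales like $\rok^{-1}$; the same observation forces a threshold on $\bar\varrho$ coupled to the error-bound constants already in the linear case.)

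Second, the step you yourself flag as ``the main obstacle''---bounding $R(\xkk,\lkk)$ by data available at step $k$---is the mathematical heart of the proof and is left unresolved; moreover, your displayed inequality misuses \eqref{eb1}, which bounds $R$ \emph{from above} by the distances (the opposite direction is \eqref{subr3}, with constant $\kappa_1$). The missing argument runs as follows: set $\skk:=\proxk\big(\Phi(\xkk)+\rho_k^{-1}\lk\big)$, so that \eqref{lkk} and \eqref{ueq19} give $\Phi(\xkk)-\skk=\rho_k^{-1}(\lkk-\lk)$; from $\lkk\in\partial g(\skk)$ and the first identity in \eqref{prm} deduce $\skk={\rm prox}_g(\skk+\lkk)$; then the nonexpansiveness of $y\mapsto y-{\rm prox}_g(y+\lkk)$ yields
\[
\big\Vert\Phi(\xkk)-{\rm prox}_g\big(\Phi(\xkk)+\lkk\big)\big\Vert\;\le\;\Vert\Phi(\xkk)-\skk\Vert\;=\;\rho_k^{-1}\Vert\lkk-\lk\Vert\;\le\;\rho_k^{-1}\bar\tau R_k.
\]
Together with the stationarity part of $R(\xkk,\lkk)$ being at most $\ek=o(R_k)$, this is precisely what produces the contraction displayed above; summing the resulting geometric series is also what keeps the whole sequence inside the working neighborhood---your remark that shrinking $\bar\gg$ ensures ``every inexact iterate stays in the regime'' is not justified by the initial radius alone, but is a consequence of the contraction itself, as in the paper's choice of $\bar\gamma$ in \eqref{const}. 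Once these two points are repaired, your plan coincides with the paper's proof.
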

\begin{proof}
 Define $R_k:=R(\xk,\lk)$   and observe that if $R_k=0$ for some $k$, then the pair $(\xk,\lk)$ satisfies the KKT system \eqref{vs}, and thus Algorithm \ref{Alg1} should stop. From now on assume that $R_k>0$ for all $k\in\N$. 
  Pick the positive constants $\kappa_1$ and $\gg_1$ from Theorem~\ref{sooc}, the positive constants $\gamma_2$ and $\kappa_2$ from Proposition~\ref{erct}, the positive constants 
  $\gamma_3$ and $\kappa_3$ from Theorem~\ref{est}, and the positive constants $\hat\gg$ and $ \tau$ from Proposition~\ref{solv}. By \eqref{error}, we can find $\gamma_4>0$ such that
\begin{equation}\label{ek}
\sigma(x,\lm)\le\dfrac{1}{4\kappa_1\kappa_2}R(x,\lm)\quad\textrm{ whenever }\;(x,\lm)\in\B_{\gg_4}(\ox,\olm).
\end{equation}
Define further the  positive  constants  $\bar\tau:=\kappa_3$ and 
\begin{equation}\label{const}
\bar\gamma:=\dfrac{\gamma'}{1+2\sqrt{2}\bar\tau\kappa_2} \quad\mbox{with}\quad
\gamma':=\min\Big\{\frac{\hat\gg}{2\tau},\gamma_3,\gamma_4,\dfrac{\gamma_1}{1+\sqrt{2}\bar\tau\kappa_2},\dfrac{\gamma_2}{1+\sqrt{2}
\bar\tau\kappa_2}\Big\}.
\end{equation}
Moreover, pick $q\in (0,1) $ and set 
\begin{equation}\label{rhof}
\bar\varrho:=\max\big\{\bar\rho, 4\kappa_1\kappa_2\bar\tau, \frac{2\sqrt{2} \kappa_1\kappa_2^2\bar\tau^2}{q}\big\},
\end{equation}
where  $\bar\rho$ is taken from Theorem~\ref{ugrowth}. 
Arguing by induction, we aim at showing that for any starting point $(x^0,\lambda^0)\in\big(\Th\times\R^m\big)\cap\B_{\bar\gamma}(\ox,\olm)$ there exists a sequence $\{(\xk,\lk)\c\subset \Th \times\R^m$, generated by Algorithm~\ref{Alg1}
with $\rok \ge \bar\varrho$ and   $\ek=\sigma(\xk,\lk)$ for all $k$,
 such that for all $k=0,1,\ldots$ the relationships
\begin{equation}\label{c1}
(\xk,\lk)\in\B_{\gamma'}(\ox,\olm),
\end{equation}
\begin{equation}\label{c2}
\dist\big(-\nabla_x L(\xkk,\lkk), N_\Th(\xkk)\big)\le\ek\quad\textrm{ and }\quad\Vert\xkk-\xk\Vert+\Vert\lkk-\lk\Vert\le\bar\tau R_k
\end{equation}
hold. To achieve this goal, set $k=0$, pick $(x^0,\lambda^0)\in\big(\Th\times\R^m\big)\cap\B_{\bar\gamma}(\ox,\olm)$, and observe from  $\bar\gamma\le\gamma'$ that \eqref{c1} holds in this case. We also have $\lambda^0\in\B_{\hat\gg/2\tau}(\olm)$ and $\rho_0\ge \bar\varrho$ and then find by Proposition~\ref{solv} a primal iterate $x^1\in \inte \B_{\hat\gamma}(\ox)$ satisfying $-\nabla_x\L(x^1,\lambda^0,\rho_0)\in N_\Th(x^1)$. Define further 
the multiplier $\lm^1:=\nabla e_{1^{}/\rho_0}g\big(\Phi(x^1)+ \rho_0^{-1}\lambda^0\big)$ and observe that
\begin{equation*}
-\nabla_x L(x^1,\lm^1)=-\nabla_x\L(x^1,\lambda^0,\rho_0)\in N_\Th(x^1).
\end{equation*}
Moreover, employing  Theorem~\ref{est} confirms that 
\begin{equation*}
\Vert x^1-x^0\Vert+\Vert\lm^1-\lambda^0\Vert\le\bar\tau R_0.
\end{equation*}
Thus $(x^1,\lambda^1)$ is well defined and satisfies the claimed condition \eqref{c2} for $k=0$. Assume by induction that the iterates $(\xk,\lk)$, $k=0,1,\ldots,s+1$, are well defined  and that   both conditions \eqref{c1} and \eqref{c2} hold for $k=0,1,\ldots,s$. We verify now the existence of $(x^{s+2},\lambda^{s+2})$ satisfying \eqref{c1} and \eqref{c2} for $k=s+1$. To this end, we first show that $(x^{s+1},\lambda^{s+1})\in\B_{\gamma'}(\ox,\olm)$. 
To proceed, fix an integer   $k$ with $0\le k\le s$.
Since $(\xk,\lk)\in\B_{\gamma'}(\ox,\olm)$, it follows from \eqref{eb1} and \eqref{c1} that
\begin{eqnarray}\label{c3}
R_k&\le&\kappa_2\big(\Vert\xk-\ox\Vert+\dist(\lk,\Lambda(\ox))\big)\\
&\le&\kappa_2\big(\Vert\xk-\ox\Vert+\Vert\lk-\olm\Vert\big)\le\sqrt{2}\kappa_2\Vert(\xk,\lk)-(\ox,\olm)\Vert\le\sqrt{2}\kappa_2\gamma'.\nonumber
\end{eqnarray}
Combining this with \eqref{c1} and the second relation in \eqref{c2}, we conclude that
\begin{eqnarray}\label{c4}
\Vert(\xkk,\lkk)-(\ox,\olm)\Vert&\le&\Vert\xkk-\xk\Vert+\Vert\lkk-\lk\Vert+\Vert(\xk,\lk)-(\ox,\olm)\Vert\nonumber\\
&\le&\bar\tau R_k+\gamma'\le(\sqrt{2}\bar\tau\kappa_2+1)\gamma'\le\gamma_1,
\end{eqnarray}
which implies that $(\xkk,\lkk)\in\B_{\gamma_1}(\ox,\olm)$. Using \eqref{subr3}, we obtain
\begin{eqnarray}\label{c5}
&&\Vert\xkk-\ox\Vert+\dist\big(\lkk,\Lambda(\ox)\big)\nonumber\\
&\le&\kappa_1 R_{k+1} =\kappa_1\Big(\dist\big(-\nabla_x L(\xkk,\lkk), N_\Th(\xkk)\big) +\big\Vert\Phi(\xkk)-{\rm{prox}}_g\big(\Phi(\xkk)+\lkk\big)\big\Vert\Big)\nonumber\\
&\le&\kappa_1\ek+\kappa_1\big\Vert\Phi(\xkk)-{\rm{prox}}_g\big(\Phi(\xkk)+\lkk\big)\big\Vert.
\end{eqnarray}
To proceed further,  define
\begin{equation}\label{skk}
\skk:=\proxk\big(\Phi(\xkk)+ \rho_k^{-1}{\lk}\big ).
\end{equation}
It follows from  \eqref{lkk}  and \eqref{ueq19} that
\begin{equation}\label{eq3}
\Phi(\xkk)-\skk= \rho_k^{-1}(\lkk-\lk).
\end{equation}
Since $\lkk=\nabla\menvk\big(\Phi(\xkk)+ \rho_k^{-1}{\lk}\big)$, we deduce from the second equality in \eqref{prm} that 
\begin{equation*}\label{clq17}
\lkk   \in \sub g\big(\Phi(\xkk)+ \rho_k^{-1}(\lk-\lkk)\big)=\partial g(\skk).
\end{equation*} 
Appealing now to the first equality in \eqref{prm} yields $\skk={\rm{prox}}_g(\skk+\lkk)$. Since the mapping $y\mapsto y-{\rm{prox}}_g(y+\lkk)$ is nonexpansive (cf. \cite[Proposition~12.27]{bc}), we arrive at the relationships
\begin{eqnarray}\label{eq2}
&&\big\Vert\Phi(\xkk)-{\rm{prox}}_g\big(\Phi(\xkk)+\lkk\big)\big\Vert\nonumber\\
&=&\big\Vert\Phi(\xkk)-{\rm{prox}}_g\big(\Phi(\xkk)+\lkk\big)\big\Vert-\big\Vert\skk-{\rm{prox}}_g(\skk+\lkk)\big\Vert\nonumber\\
&\le&\big\Vert\Phi(\xkk)-{\rm{prox}}_g\big(\Phi(\xkk)+\lkk\big)-\big(\skk-{\rm{prox}}_g(\skk+\lkk)\big)\big\Vert\nonumber\\
&\le&\Vert\Phi(\xkk)-\skk\Vert.
\end{eqnarray}
Combining \eqref{c5}, \eqref{eq3}, and \eqref{eq2} brings us to
\begin{equation*}
\Vert\xkk-\ox\Vert+\dist\big(\lkk,\Lambda(\ox)\big)\le\kappa_1\ek+\rho_k^{-1}{\kappa_1}\Vert\lkk-\lk\Vert.
\end{equation*}
This,  together with \eqref{ek}, \eqref{rhof}, the second inequality in \eqref{c2}, and \eqref{c3} results in 
\begin{eqnarray}\label{c10}
\Vert \xkk-\ox\Vert+\dist\big(\lkk,\Lambda(\ox)\big)&\le&\kappa_1\ek+\dfrac{\bar\tau\kappa_1}{\rok}R_k\le\dfrac{1}{4\kappa_2}R_k +\dfrac{1}{4\kappa_2}R_k\\
&\le&\dfrac{1}{2}\Big(\Vert\xk-\ox\Vert+\dist\big(\lk,\Lambda(\ox)\big)\Big)\nonumber,
\end{eqnarray}
which in turn implies the primal-dual estimates
\begin{equation}\label{c6}
\Vert\xkk-\ox\Vert+\dist\big(\lkk,\Lambda(\ox)\big)\le\dfrac{1}{2^{k+1}}\Big(\Vert x^0-\ox\Vert+\dist\big(\lambda^0,\Lambda(\ox)\big)\Big).
\end{equation}
This estimate, \eqref{c3},  and the second inequality in \eqref{c2} tell us that 
\begin{eqnarray}\label{c7}
\Vert(x^{s+1},\lambda^{s+1})-(x^0,\lambda^0)\Vert&\le&\sum\limits_{k=0}^s\Vert(\xkk,\lkk)-(\xk,\lk)\Vert \le\bar\tau\sum\limits_{k=0}^s R_k\nonumber\\
&\le&\bar\tau\kappa_2\sum\limits_{k=0}^s\Vert\xk-\ox\Vert+\dist\big(\lk,\Lambda(\ox)\big)\nonumber\\
&\le&\bar\tau\kappa_2\sum\limits_{k=0}^s\dfrac{1}{2^k}\Big(\Vert x^0-\ox\Vert+\dist\big(\lambda^0,\Lambda(\ox)\big)\Big)\\
&\le&2\bar\tau\kappa_2\big(\Vert x^0-\ox\Vert+\dist(\lambda^0,\Lambda(\ox))\big)\nonumber\\
&\le&2\bar\tau\kappa_2\big(\Vert x^0-\ox\Vert+\Vert\lambda^0-\olm\Vert\big).\nonumber
\end{eqnarray}
Thus, we arrive at the estimates
\begin{eqnarray*}
\Vert(x^{s+1},\lambda^{s+1})-(\ox,\olm)\Vert&\le&\Vert(x^{s+1},\lambda^{s+1})-(x^0,\lambda^0)\Vert+\Vert(x^0,\lambda^0)-(\ox,\olm)\Vert\\
&\le&2\bar\tau\kappa_2\big(\Vert x^0-\ox\Vert+\Vert\lambda^0-\olm\Vert\big)+\Vert(x^0,\lambda^0)-(\ox,\olm)\Vert\\
&\le&(2\sqrt{2}\bar\tau\kappa_2+1)\Vert(x^0,\lambda^0)-(\ox,\olm)\Vert\le(2\sqrt{2}\bar\tau\kappa_2+1)\bar\gamma=\gamma',
\end{eqnarray*}
where the last inequality follows from $(x^0,\lambda^0)\in\B_{\bar\gamma}(\ox,\olm)$, while the last equality comes from the definition of $\bar\gamma$ in \eqref{const}. 
This verifies that $(x^{s+1},\lambda^{s+1})\in\B_{\gamma'}(\ox,\olm)$. By  \eqref{const}, we get  $\lambda^{s+1}\in\B_{\hat\gg/2\tau}(\olm)$, and hence Proposition~\ref{solv} ensures the existence of of an optimal solution $x^{s+2}$ 
to \eqref{regp} associated with $(\lm^{s+1},\rho_{s+1})$ such that $x^{s+2}\in \inte\B_{\hat\gamma}(\ox)$. The latter yields 
\begin{equation*}
-\nabla_x\L(x^{s+2},\lambda^{s+1},\rho_{s+1})\in N_\Th(x^{s+2}).
\end{equation*} 
Set now $\lm^{s+2}:=\nabla e_{1^{}/\rho_{s+1}}\big(\Phi(x^{s+2})+\rho_{s+1}^{-1}\lambda^{s+1}\big)$ and observe that
\begin{equation*}
 -\nabla_x L(x^{s+2},\lm^{s+2}) =  -\nabla_x\L(x^{s+2},\lambda^{s+1},\rho_{s+1}) \in N_\Th(x^{s+2}),
\end{equation*}
which by Proposition~\ref{est} brings us to the estimate
\begin{equation*}
\Vert x^{s+2}-x^{s+1}\Vert+\Vert\lm^{s+2}-\lambda^{s+1}\Vert\le\bar\tau R_{s+1}.
\end{equation*}
This tells us that the iterate $(x^{s+2},\lambda^{s+2})$ satisfies   \eqref{c2} for $k=s+1$, which completes  the proof of  induction. Thus there exists a sequence  $\{(\xk,\lk)\c\subset\Th\times\R^m$, generated by Algorithm~\ref{Alg1} 
with $\rok \ge \bar\varrho$ and   $\ek=o\big(R(\xk,\lk)\big)$ for all $k$, for which  \eqref{c1} and \eqref{c2} fulfill.

Next, we are going to justify  the convergence of this primal-dual   sequence. Using the same arguments as in the proofs of \eqref{c6} and \eqref{c7} leads us to
\begin{equation}\label{c8}
\Vert(x^{k+l},\lambda^{k+l})-(\xk,\lk)\Vert\le 2\bar\tau\kappa_2\big(\Vert\xk-\ox\Vert+\dist(\lk,\Lambda(\ox))\big)\quad\mbox{for all}\;\;k,l\in\N.
\end{equation}
It follows from \eqref{c6} that the right-hand side of \eqref{c8} goes to zero as $k\to\infty$, which tells us that the sequence $\{(\xk,\lk)\c$ is Cauchy. By \eqref{c6},   this sequence converges  to $(\ox,\hat\lambda)$ for some $\hat\lambda\in\Lambda(\ox)$.
Now, we show  that  the rate of this convergence is linear. Indeed, letting $l\to\infty$ in \eqref{c8} tells us that for all $k\in\N$ we have
\begin{equation*}\label{c9}
\Vert(\xk,\lk)-(\ox,\hat\lambda)\Vert\le 2\bar\tau\kappa_2\big(\Vert\xk-\ox\Vert+\dist(\lk,\Lambda(\ox))\big),
\end{equation*}
which together with the first inequality in \eqref{c10} verifies that
\begin{eqnarray*}
\Vert(\xkk,\lkk)-(\ox,\hat\lambda)\Vert&\le&2\bar\tau\kappa_2\big(\Vert\xkk-\ox\Vert+\dist(\lkk,\Lambda(\ox))\big)\nonumber\\
&\le&2\bar\tau\kappa_1\kappa_2\big(R_k^{-1}{\ek}+\rho_k^{-1}\bar\tau\big)R_k\nonumber\\
&\le&2\bar\tau\kappa_1\kappa_2^2\big(R_k^{-1}{\ek}+\rho_k^{-1}\bar\tau\big)\big(\Vert\xk-\ox\Vert+\dist(\lk,\Lambda(\ox))\big)\nonumber\\
&\le&2\bar\tau\kappa_1\kappa_2^2\big(R_k^{-1}{\ek}+\rho_k^{-1}\bar\tau\big)\big(\Vert\xk-\ox\Vert+\Vert\lk-\hat\lambda\Vert\big)\nonumber\\
&\le& 2\sqrt{2}\bar\tau\kappa_1\kappa_2^2\big(R_k^{-1}{\ek}+\rho_k^{-1}\bar\tau\big)\Vert(\xk,\lk)-(\ox,\hat\lambda)\Vert.
\end{eqnarray*}
Combining this with $\rok\ge \bar\varrho$  with $ \bar\varrho$ taken from \eqref{rhof} and using the choice of the tolerance $\ek=o(R_k)$ result in 
\begin{equation*}
\limsup_{k\to\infty}\dfrac{\Vert(\xkk,\lkk)-(\ox,\hat\lambda)\Vert}{\Vert(\xk,\lk)-(\ox,\hat\lambda)\Vert}\le\limsup_{k\to\infty}2\sqrt{2}\bar\tau
\kappa_1\kappa_2^2\big(R_k^{-1}{\ek}+\rho_k^{-1}\bar\tau\big)\le q.
\end{equation*}
It follows from  $q\in (0,1)$ that the rate of   convergence of $\{(\xk,\lk)\c$ is linear. Furthermore, it is clear from the above inequality that if $\rok\to \infty$, we 
arrive at the superlinear convergence of the latter primal-dual sequence, which completes the proof.
\end{proof} 

The above theorem  extends a similar result, established in \cite[Theorem~3.4]{fs12} for nonlinear programming problems 
under a similar assumption, to the framework of  the composite optimization problem \eqref{comp} and thus significantly broaden the scope of this 
 result for important classes of composite problems, covered by \eqref{comp}. 

We end this section with a discussion on the selection of the 
parameter $\rho$ in the ALM for the composite problem \eqref{comp}. It is well-known that the boundedness of the penalty parameters $\rok$
in the computational implementation of Algorithm~\ref{Alg1} can alleviate the hurdle of dealing with ill-conditioned subproblems of the ALM. 
To avoid such an issue,  we can update the penalty parameter $\rok$  at each iteration in a particular way instead of using a pre-constructed sequence. 
To achieve this goal,   define   the auxiliary function $V: \mathbb{R}^n\times \mathbb{R}^m\times (0,\infty)\to (0,\infty)$ by 
\begin{equation}\label{aux}
V(x, \lambda, \rho):=\dist\big( -\nabla_x\L(x, \lambda, \rho), N_\Th(x)\big)+   \Vert\Phi(x)-\prox\big (\Phi(x)+ \rho^{-1}{\lambda}\big )  \Vert. 
\end{equation}
Given constants $\alpha \in (0, 1)$ and $r>1$, the penalty parameter in Algorithm~\ref{Alg1} can be updated according to the following rule:
\begin{equation}\label{rho}
\rokk := \begin{cases}
\rok\quad&\textrm{if } k=0\textrm{ or } V(\xkk, \lk, \rok)\leq\alpha V(\xk, \l1k, \ro1k),\\
r\rok&\textrm{otherwise}.
\end{cases}
\end{equation} 
In the framework of Theorem~\ref{locpd}, we are going to  show that the sequence $\{\rok\c$ in Algorithm~\ref{Alg1}, generated by \eqref{rho}, remains bounded.
To this end, it follows from the definitions of $V$ in \eqref{aux} and  the residual function $R$ in \eqref{res}, and from  \eqref{lkk} and \eqref{eq2} 
that 
\begin{equation*}\label{eq12}
V(\xkk, \lk, \rok)=\dist \big(-\nabla_xL(\xkk, \lkk), N_\Th(\xkk)\big)+ \Vert\Phi(\xkk)-\skk \Vert\geq R(\xkk,\lkk),
\end{equation*} 
where $\skk$ comes from \eqref{skk}. Similarly, we can show that $V(\xk, \l1k, \ro1k)\geq R(\xk,\lk)$.  On the other hand, we conclude from \eqref{cone},  \eqref{c2}, and \eqref{eq3} that
\begin{eqnarray*} 
V(\xkk, \lk, \rok)&=&\dist\big (-\nabla_xL(\xkk, \lkk), N_\Th(\xkk)\big)+ \rho_k^{-1}{\Vert \lkk-\lk\Vert }\\
&\leq & \ek+ \rho_k^{-1}\bar\tau R(\xk,\lk).
\end{eqnarray*} 
Combining these tells us that 
$$
\frac{V(\xkk, \lk, \rok)}{V(\xk, \l1k, \ro1k)}\le \frac{\ve_k}{R(\xk,\lk)}+  \rho_k^{-1}\bar\tau.
$$
Remember from  Theorem~\ref{locpd} that $\ek=o\big(R(\xk,\lk)\big)$ for all $k$. So, there exist $k_0\in \N$ such that 
$$
 \frac{\ve_k}{R(\xk,\lk)}<\frac{\al}{2}\quad \mbox{for all}\quad k\ge k_0.
$$
Moreover, choosing  $\bar \rho$ in Algorithm~\ref{Alg1}  sufficiently large allows us to conclude that   $\rho_k^{-1}\bar\tau<\al/2$ for all $k \in \N$.
Combining these ensures us  that 
\begin{equation}\label{bov}
\frac{V(\xkk, \lk, \rok)}{V(\xk, \l1k, \ro1k)}\le \al\quad \mbox{for all}\quad k\ge k_0.
\end{equation} 
Using \eqref{bov} for $k=k_0$ and then appealing to \eqref{rho} yield $\rho_{k_0+1}=\rho_{k_0}$.
 Now, an argument via induction together with \eqref{bov} and  \eqref{rho}  tells us that  $\rho_{k_0+s} = \rho_{k_0}$ for all $s\in\N$. Thus the sequence $\{\rok\c$ 
remains bounded in Algorithm~\ref{Alg1} provided that $\bar \rho$ in Algorithm~\ref{Alg1}  is sufficiently large and that we utilize \eqref{rho} to update the penalty parameters in this algorithm.

\section{Examples of Composite Optimization Problems} \label{sect06}\sce

In this section, we are going to present some important classes of constrained and composite optimization problems 
that can be covered by the composite problem \eqref{comp}. We begin with a discussion about the stopping criterion \eqref{xkk} as well as the updating scheme \eqref{rho}
for the composite problem \eqref{comp}.  According to \eqref{Th}, we always have $N_{\Th}(x)=\rge B^*$ for all $x\in \Th$. Moreover, Theorem~\ref{locpd} and \eqref{error}
provide us   a constructive way to choose the parameter $\ve_k$ in the stopping criterion \eqref{xkk}. To do this, choose a constant $\al>1$ and then set   
$$
\ve_k:=\big(R(\xk,\lk)\big)^\al,
$$
where the residual function $R$ is defined by \eqref{res}. Thus, \eqref{est0} can be equivalently expressed by 
\begin{equation*} 
\dist\big(-\nabla_x\L(\xkk, \lk, \rok), \rge B^*\big)\leq \big(R(\xk,\lk)\big)^\al.
\end{equation*} 
To compute $\nabla_x\L(\xkk, \lk, \rok)$, note that 
\begin{eqnarray*}
\nabla_x\L(\xkk, \lk, \rok)=\nabla \ph(\xkk)+ \nabla \Phi(\xkk)^*\nabla(e_{ {1}/{\rok}} g)(\Phi(\xkk)+\rok^{-1}\lk).
\end{eqnarray*}
Moreover, we know from \cite[Theorem~2.26]{rw} that $\nabla\menv(z) = \rho\big(z-\prox(z)\big)$ for any $z\in \R^m$. Combining these tells us that if 
we can compute the proximal mapping of $g$, then $\nabla_x\L(\xkk, \lk, \rok)$ can be calculated at each iteration of the ALM. Since we have 
$$
R(\xk,\lk)=\dist\big(-\nabla_x L(\xk, \lk), \rge B^*\big)+\big\|\Phi(\xk)-{\rm{prox}}_g\big(\Phi(\xk)+\lk\big)\big\|,
$$
the computation of the residual function $R$ at each iteration depends on our ability to calculate the proximal mapping of $g$ and the distance function of $\rge B^*$. 
In summary, to check the stopping criterion \eqref{xkk}, the updating scheme \eqref{rho}, and the residual function $R$ at each iteration 
of the ALM, one requires to compute 1) the proximal mapping of the CPLQ function $g$; 2) the distance function of $\rge B^*$.
Below, we assume that the constraint set $\Th$ is simple enough so that the distance function of the subspace $\rge B^*$ can be computed and thus consider 
some instances of the composite problem \eqref{comp} for which the proximal mapping of $g$ can be calculated  as well:

\begin{enumerate}
\item   $g=\dd_C$,  where $C=\{0\}^{s}\times \R^{m-s}_-$ with $0\le s\le m$, and $\Th=\R^n$. In this case the composite problem \eqref{comp}
reduces to a nonlinear programming problem with $s$ equality constraints and $m-s$ inequality constraints  for which the ALM was studied in \cite{Ber82, fs12, iks15}. In this setting, the proximal mapping of $g$ amounts to the projection mapping $P_C$,
which can be computed by \cite[Lemma~6.26]{ab}.
Note that the SOSC \eqref{sosc} boils down to the classical second-order sufficient condition for nonlinear programs. In this framework, 
Theorem~\ref{locpd} covers \cite[Theorem~3.4]{fs12} in which the linear convergence of the ALM under the SOSC was established. 

\item $g(z)=\max\{z_1,\ldots,z_m\}$, where $z=(z_1,\ldots,z_m)$, and $\Th=\R^n$. In this case, the composite problem \eqref{comp} reduces to the unconstrained  minimax problem
$$
\mbox{minimize}\;\;\ph(x)+\max\big\{\ph_1(x), \ldots, \ph_m(x)\big\}\quad \mbox{subject to}\;\; x\in \R^n,
$$ 
where $\Phi=(\ph_1,\ldots,\ph_m)$ in \eqref{comp} with $\ph_i:\R^n\to \R$ for $i=1,\ldots,m$.
The proximal mapping of $g$ can be obtained via  \cite[Example~6.49]{ab} as
$$
\prox(z) =z-\rho^{-1}P_{\Delta_m}(\rho z),
$$
where $\Delta_m$ stands for the unit simplex in $\R^m$. Note also the projection mapping of the unit simplex can be fully computed; see \cite[Corollary~6.29]{ab}. 
It is worth mentioning that the SOSC \eqref{sosc} can be simplified via \cite[Example~13.16]{rw} as 
\begin{equation}\label{rev1}
\langle\nabla_{xx}^2L(\bar x,\olm)w,w\rangle>0\quad \mbox{for all }\; w\neq 0\;\;\mbox{with}\;\; \nabla \Phi(\ox)w\in K_g \big(\Phi(\ox),\olm \big),
\end{equation}
since $\d^2g (\Phi(\bar x),\olm) (u)=0$ for all $u\in K_g \big(\Phi(\ox),\olm \big)$.
\item $g$ is the   $\|\cdot \|_\infty$ or $\|\cdot\|_1$ in $\R^m$ with $\Th=\R^n$.  In the case, the composite problem \eqref{comp} covers the regularized linear and nonlinear least square  optimization problems with 
$g$ having a role in {\em regularizing} optimal solutions. The proximal mapping of $g$ for these cases can be found in \cite[Example~6.8]{ab} and \cite[Example~6.48]{ab}, respectively. 
Similar to (b), one can observe via \cite[Proposition~13.9]{rw} that the SOSC \eqref{sosc} can be  equivalently expressed  as \eqref{rev1} since  $\d^2g (\Phi(\bar x),\olm) (u)=0$ for all $u\in K_g \big(\Phi(\ox),\olm \big)$.

\item $g$ is a CPLQ function,  defined by
\begin{equation*}
g(z) := \sup_{y\in C}\Big\{\langle z, y\rangle - \tfrac{1}{2}\langle y, B y\rangle\Big\},\;\;z\in \R^m,
\end{equation*}
where $C$ is a  polyhedral convex set $\R^m$, and $B$ is an $m\times m$  symmetric and positive semidefinite matrix . 
If , in addition, we assume that the objective function $\ph$ in \eqref{comp} has a quadratic representation 
$$
\ph(x)=\la q,x\ra+\sm \la Qx,x\ra,
$$
where $q\in \R^n$, and  $Q$ is an $n\times n$ symmetric matrix, then the composite problem \eqref{comp}
falls into the class of extended linear-quadratic programming problems, which goes back to Rockafellar and Wets \cite{rw86}.
To find the proximal mapping of $g$ for this setting, observe that  (cf. \cite[Theorem~6.45]{ab})
$$
\prox(z)+ \rho^{-1}\textrm{prox}_{\rho g^*}(\rho z)=z, \;\;z\in \R^m, 
$$
where $g^*$ stands for the Fenchel conjugate of $g$ in the sense of convex analysis. Moreover, it is easy to see that 
$g^* =  j_B+\delta_C$ with $j_B(y): = \tfrac{1}{2}\langle y, B y\rangle$  for any $y\in \R^m$, which  leads us to 
\begin{equation*}
\textrm{prox}_{\rho g^*}(\rho z) = \argmin_{y\in C}\Big\{j_B(y)+\tfrac{1}{2\rho}\|y-\rho z\|^2\Big\}
=\Big\{y\in C\, \big|\, z\in (B+\rho^{-1}I)y+N_C(y)\Big\}.
\end{equation*}
Thus, for a given point $z\in \R^m$, the computation of $\textrm{prox}_{\rho g^*}(\rho z)$ 
reduces to solving a strongly monotone variational inequality, which can be achieved by the projected gradient descent method, see, e.g., \cite[Algorithm~3.4]{Noo91} and \cite[Algorithm~12.1.1]{FaP03}.

\item  $g$ could be a mixture of different possibilities, discussed in (a)-(d).   This means that the vector $z$ can be partitioned into $z^j$ for $j=1,\ldots,s$ with $s\in \N$ and 
the corresponding modeling function $g_j$ is  chosen from those listed in (a)-(d) and then $g(z^1,\ldots,z^s)=\sum_{i=1}^s g_i(z^i)$. According to \cite[Theorem~6.6]{ab}
we have 
$$
\mbox{prox}_g(z^1,\ldots,z^s)=\mbox{prox}_{g_1}(z^1)\times \cdots\times \mbox{prox}_{g_s}(z^s).
$$
As argued above, for each $j=1,\ldots,s$, the proximal mapping  $\mbox{prox}_{g_j}(z^j)$ can be computed. This  allows us to  find $\mbox{prox}_g(z^1,\ldots,z^s)$ for this setting.
\end{enumerate}

{\bf Acknowledgements.} We thank the two anonymous reviewers whose comments and suggestions helped improve the original presentation of the paper.

\small

\end{document}